\numberwithin{equation}{section}
\numberwithin{figure}{section}
\theoremstyle{plain}
\newtheorem{thm}{\protect\theoremname}[section]
\theoremstyle{plain}
\newtheorem{assumption}[thm]{\protect\assumptionname}
\theoremstyle{definition}
\newtheorem{example}[thm]{\protect\examplename}
\theoremstyle{remark}
\newtheorem{rem}[thm]{\protect\remarkname}
\theoremstyle{plain}
\newtheorem{lem}[thm]{\protect\lemmaname}
\theoremstyle{definition}
\newtheorem{defn}[thm]{\protect\definitionname}
\theoremstyle{plain}
\newtheorem{prop}[thm]{\protect\propositionname}
\theoremstyle{plain}
\newtheorem{cor}[thm]{\protect\corollaryname}
\renewcommand{\div}{\operatorname{div}}
\newcommand{\sgn}{\operatorname{sgn}}
\newcommand{\eps}{\varepsilon}
\numberwithin{equation}{section}
\newcommand{\Acal} {{\mathcal A}}
\newcommand{\Bcal} {{\mathcal B}}
\newcommand{\Ecal} {{\mathcal E}}
\newcommand{\Fcal} {{\mathcal F}}
\newcommand{\Lcal} {{\mathcal L}}
\newcommand{\Z}{\mathbb{Z}}
\newcommand{\R}{\mathbb{R}}
\newcommand{\N}{\mathbb{N}}
\renewcommand{\P}{\mathbb{P}}
\newcommand{\E}{\mathbb{E}}
\newcommand{\C}{\mathbb{C}}
\newcommand{\A}{\mathbb{A}}
\newcommand{\T}{\mathbb{T}}
\newcommand{\brrr}[1]{\sideset{_{S^\ast}}{_{S}}{\mathop{\left\langle #1 \right\rangle}}}
\subjclass[2010]{Primary: 35K55, 35K92, 60H15; Secondary: 49J40, 58J65.}
\providecommand{\assumptionname}{Assumption}
\providecommand{\corollaryname}{Corollary}
\providecommand{\definitionname}{Definition}
\providecommand{\examplename}{Example}
\providecommand{\lemmaname}{Lemma}
\providecommand{\propositionname}{Proposition}
\providecommand{\remarkname}{Remark}
\providecommand{\theoremname}{Theorem}
\begin{document}
\author{Jonas M. T\"{o}lle}
\address{Universit\"{a}t Augsburg\\
Institut f\"{u}r Mathematik\\
86135 Augsburg\\
Germany}
\email{jonas.toelle@math.uni-augsburg.de}
\thanks{The author would like to thank Max von Renesse, Hiroshi Kawabi and
Lisa Beck for stimulating discussions on the topic of this work. The
author is indebted to the anonymous referees for their suggestions
of several improvements. Continuous support by the M.O.P.S. program
is gratefully acknowledged.}
\date{\today}
\title[Stochastic evolution equations with singular drift and gradient noise]{Stochastic evolution equations with singular drift and gradient noise
via curvature and commutation conditions}
\begin{abstract}
We prove existence and uniqueness of solutions to a nonlinear stochastic
evolution equation on the $d$-dimensional torus with singular $p$-Laplace-type
or total variation flow-type drift with general sublinear doubling
nonlinearities and Gaussian gradient Stratonovich noise with divergence-free
coefficients. Assuming a weak defective commutator bound and a curvature-dimension
condition, the well-posedness result is obtained in a stochastic variational
inequality setup by using resolvent and Dirichlet form methods and
an approximative It\^{o}-formula.
\end{abstract}

\keywords{nonlinear Stratonovich stochastic partial differential equation; stochastic
variational inequality; singular stochastic $p$-Laplace evolution
equation; multiplicative gradient Stratonovich noise; defective commutator
bound; Bakry-\'{E}mery curvature-dimension condition.}
\maketitle

\section{Introduction}

We shall study the nonlinear stochastic partial differential equation
(nonlinear SPDE) with singular drift and gradient-type multiplicative
Stratonovich noise 
\begin{equation}
\begin{aligned}dX_{t} & =\div(a^{\ast}\phi(a\nabla X_{t}))\,dt+\langle b\nabla X_{t},\circ\,dW_{t}\rangle,\quad t\in(0,T],\\
X_{0} & =x.
\end{aligned}
\label{eq:formal-SPDE}
\end{equation}
In order to eliminate boundary or curvature effects from the underlying
space, we shall consider the SPDE on $\T^{d}=\R^{d}/\Z^{d}$, $d\ge1$.
Here, $a:\T^{d}\to\R^{d\times d}$, $b:\T^{d}\to\R^{N\times d}$ are
$C^{1}$-coefficient fields (where $a^{\ast}$ denotes the transpose
of $a$) and $\{W_{t}\}_{t\ge0}$ is an $\R^{N}$-valued standard
Wiener process, $N\ge1$. In this work, we shall prove well-posedness
for initial data $x\in L^{2}(\T^{d})$ or, more generally, for $x\in L^{2}(\Omega,\Fcal_{0},\P;L^{2}(\T^{d}))$.
We assume a linear growth condition on $\phi:\R^{d}\to\R^{d}$, where
we also may consider the multi-valued case $\phi:\R^{d}\to2^{\R^{d}}$.
Furthermore, a geometric curvature-dimension condition on the Riemannian
metric $g_{a}:=(a^{\ast}a){}^{-1}$ and a (defective) commutator estimate
for the first order operator $u\mapsto b\nabla u$ are assumed.

The drift operator $u\mapsto\div(a^{\ast}\phi(a\nabla u))$ is a distorted
$p$-Laplace-type operator, when $\phi(\zeta)=|\zeta|^{p-2}\zeta$,
$p\in[1,2]$, which reduces to the linear diffusion operator $u\mapsto\div(a^{\ast}a\nabla u)$
for $p=2$ and includes multi-valued examples as the total variation
flow operator $u\mapsto\div(\sgn(\nabla u))$ for $p=1$, $a=1$,
which is also called $1$-Laplace. The equation is perturbed by independent
Brownian motions $\{W_{t}^{i}\}_{t\ge0}$, $1\le i\le N$, driven
by family of divergence-free $C^{1}$-vector fields $b_{i}$, $1\le i\le N$
acting in gradient direction, which may be degenerate, so that the
``deterministic'' PDE case is covered for $b\equiv0$.

Equations of the type \eqref{eq:formal-SPDE}, have previously been
studied in \cite{BBHT13} (for the case $p>1$), in \cite{Ciotir:2016fe}
(for the case of a domain with symmetries and Neumann boundary conditions),
in \cite{MunteanuRoeckner2016} (for Dirichlet boundary conditions),
and in \cite{Barbu:2017dr} (by an approach with weak solutions in
the sense of distributions). As the coefficient field of the noise
term is given by an unbounded operator in space, the It\^{o}-analogue
of \eqref{eq:formal-SPDE} is ill-posed in general. The equation is
discussed in \cite[Section 7]{BR15} as an example for an approach
via a transformation by a group of random multipliers. Note that our
approach does not rely on a transformation of \eqref{eq:formal-SPDE}
to a random PDE --- rather than that, we obtain the unique solutions
in terms of (stochastic) variational inequalities by a multi-step
approximation procedure and a perturbation argument which relies on
a weak defective commutator bound formulated in terms of Dirichlet
forms, see \cite{FOT,MR} for this notion. The conditions are discussed
in Section \ref{sec:Ingredients} below. In particular, we need a
curvature bound related to heat kernel estimates and lower bounds
for Ricci curvature in order to derive a priori estimates for the
singular equation. The higher order a priori estimates, which we need
for the approximation procedure in the proof of the main result, are
then derived from the defective commutator bounds. In \cite{Toelle2018proceedings},
some previous results and the rough idea, which this work is based
on, have been proposed by the author. In the future, equations involving
more general Dirichlet operators, e.g. of nonlocal type, could be
considered.

As the drift term in the SPDE \eqref{eq:formal-SPDE} is singular,
it is a known issue that solutions to the SPDE do not satisfy an It\^{o}-equation
in general (on these lines, see e.g. \cite{GessToelle14}), when,
for instance, there is no Sobolev embedding for the energy of the
drift available (as is e.g. in \cite{Liu09} for $p>1\vee\frac{2d}{d+2}$).
See \cite{Marinelli:2018ej} for an approach for nonlinearities that
satisfy an superlinearity condition at infinity and see \cite{ValletZimmermann2019}
for the case that the drift is perturbed with a first-order transport
term. In our situation, the It\^{o}-Stratonovich correction and the
special linear structure of the noise adds some regularity to the
equation, however, even for initial data in $H^{1}(\mathbb{T}^{d})$
and for $p\approx1$, to the best of our knowledge, the (limit) solutions
to \eqref{eq:formal-SPDE} can merely be characterized in terms of
stochastic variational inequalities (SVI), see e.g. \cite{VBMR}.
Among others, SVI-solutions to stochastic evolution equations have
also been discussed in \cite{Rascanu:2017dp,BenRas,Gess:2016kda,Gess:2015gw,GT:2016he}.
As seen in \cite{GessToelle15}, the SVI-approach is quite robust
under perturbations of the convex-subpotential-type drift with respect
to the Mosco-topology. In this work, we prove existence and uniqueness
for initial data in $L^{2}(\T^{d})$, see our main result Theorem
\ref{thm:mainmainthm} in Section \ref{sec:The-main-result} below.

Let us point out that we generalize the previously known results on
solutions to \eqref{eq:formal-SPDE} in the following aspects. As
we assume periodic boundary conditions, we are able to dispense with
the requirement of the boundary to be assumed to be $C^{3}$ and coefficient
fields to be assumed to be perpendicular to the boundary and $C^{2}$.
We do merely require that the driving vector fields $b_{i}$, $1\le i\le N$
are $C^{1}$, divergence-free and pointwise linearly independent ---
we do not need to assume commutation\footnote{We remark that there is another way to dispense with commutation via
a transformation to a flow of diffeomorphisms, see e.g. \cite{BrzeZniak:1988gf}.} here, as is done e.g. in the classical linear SPDE case in \cite{DaPrato:1982wn,DaPrato:1982ci}.
The introduction of the deformed diffusivity is new and may possibly
be applied to more general compact Riemannian manifolds than the torus
in the future. Apart from the geometric deformation, we also include
more general nonlinearities (in the spirit of \cite[Section 7]{GessToelle14}),
extending the results of \cite{Ciotir:2016fe,GessToelle15} from homogeneous
nonlinearities to ones satisfying a doubling condition, see Example
\ref{exa:Condition-(N)} below. In fact, the nonlinearities might
become multi-valued so that equation \eqref{eq:formal-SPDE} and its
It\^{o}-Stratonovich corrected form \eqref{eq:main-spde} below are
actually given by the more general subpotential equation \eqref{eq:main-spde-1}
below, which involves the relaxed convex potentials. In the sequel,
all of our analysis is generally referring to equation \eqref{eq:main-spde-1}
below rather than to the formal equation \eqref{eq:formal-SPDE}. 

Possible future topics for equations of this type include ergodicity
and uniqueness of invariant measures for Markov semigroups (compare
also \cite{GT:2016he,LiuToe1,BDP06,ESvR,ESvRS} for additive noise),
stability under perturbations of the drift or noise (cf. \cite{GessToelle15,CiotToe})
and regularity (see e.g. \cite{Breit:2016dga,Breit:2014cda}) or an
approach via entropy solutions \cite{DareiotisGerencserGess2019}
or renormalized solutions \cite{SapountzoglouZimmermann2019}. We
also refer to \cite{DareiotisGess2018,Turra2018,FehrmanGess2019}
for porous media type stochastic equations with gradient noise.

\subsection*{Notation}

Denote by $\mathbb{T}^{d}:=\mathbb{R}^{d}\big/\mathbb{Z}^{d}$ the
standard flat torus of dimension $d\ge1$, equipped with the $d$-dimensional
Lebesgue measure $d\xi$ on its Borel $\sigma$-algebra $\mathcal{B}(\mathbb{T}^{d})$.
On $\R^{d}$, the Euclidean norm and inner product  are denoted by
$|\cdot|$ and $\langle\cdot,\cdot\rangle$ respectively. For $\alpha\in\R$,
we denote the linear operator $u\mapsto\alpha u$ (on some vector
space) simply by $\alpha$. Denote $H:=L^{2}(\mathbb{{T}}^{d})$,
$S:=H^{1}(\mathbb{{T}}^{d})$. Let $S^{\ast}$ denote the topological
dual of $S$. Note that the embedding $S\hookrightarrow H$ is dense
and compact. Let $(\mathcal{E},D(\mathcal{E}))$ be the \emph{Dirichlet
form} of the \emph{Laplace-Beltrami operator} $L:=\Delta:=\div(\nabla\cdot)$
on $H$, that is, $D(\mathcal{{E}}):=S$ and
\[
\mathcal{E}(u,v):=\int_{\mathbb{T}^{d}}\langle\nabla u,\nabla v\rangle\,d\xi,\quad u,v\in D(\mathcal{E}),
\]
see \cite{FOT,MR}. Let $(G_{\alpha})_{\alpha>0}$ be the \emph{resolvent}
of $L$, i.e., for $\alpha>0$, $G_{\alpha}u:=(\alpha-L)^{-1}u$,
$u\in H$. For convenience, we shall also introduce the alternative
resolvent $J_{\delta}u:=(1-\delta L)^{-1}u$, where $u\in H$ and
$\delta>0$. Clearly, $J_{\delta}=\frac{1}{\delta}G_{1/\delta}$ for
every $\delta>0$. The resolvent $J_{\delta}$, when considered both
as a map from $H$ to $H$ or as a map from $S$ to $S$, is a contraction.
Denote the \emph{Yosida-approximation} of $L$\emph{ }by $L^{(\delta)}u:=LJ_{\delta}u=\frac{{1}}{\delta}(J_{\delta}-1)u$,
$u\in H$. $L^{(\delta)}:H\to H$ is a negative definite bounded linear
operator with operator norm bounded by $\frac{1}{\delta}$. Furthermore,
for $\alpha>0$, $u,v\in D(\mathcal{E})$, let $\mathcal{E}_{\alpha}(u,v):=\mathcal{E}(u,v)+\alpha(u,v)_{H}$
be inner products for $S$ with the property that $\Ecal_{\alpha_{1}}$,
$\Ecal_{\alpha_{2}}$ are mutually equivalent for $\alpha_{1},\alpha_{2}>0$.
For $\beta>0$, define also \emph{approximate forms} $\mathcal{E}^{(\beta)}(u,v):=\beta(u-\text{\ensuremath{\beta}}G_{\text{\ensuremath{\beta}}}u,v)_{H}$,
$u,v\in H$, see e.g. \cite[Chapter I, p. 20]{MR}. Set also $\mathcal{E}_{\alpha}^{(\beta)}(u,v):=\mathcal{E}^{(\beta)}(u,v)+\alpha(u,v)_{H}$,
$\alpha,\beta>0$, $u,v\in H$. Denote by $(P_{t})_{t\ge0}$ the \emph{heat
semigroup }associated to $L$. We shall denote the \emph{trace} of
a square matrix by $\operatorname{Tr}$.

\subsection*{Organization of the paper}

In Section \ref{sec:Ingredients}, which is subdivided into several
subsections, we shall discuss the different terms of equation \eqref{eq:formal-SPDE}
and the main assumptions on those. Here, also the main hypotheses
of this work are stated and discussed. In Section \ref{sec:Stochastic-VI},
we shall introduce and discuss the concept of so-called \emph{stochastic
variational inequality (SVI) solutions} to our equation. Section \ref{sec:The-main-result},
which is subdivided into several subsections, contains the statement
and the proof of our main result, including the a priori estimates,
the passage to the limit of the approximating equations, the existence
and the uniqueness result. In Appendix \ref{sec:app-ex}, we recall
the conditions from \cite{GessToelle14} which are needed to guarantee
the existence of approximating solutions to equation \eqref{eq:formal-SPDE}.
The concluding Appendix \ref{sec:Proof} contains the postponed proof
of Proposition \ref{prop:BEcond}.

\section{\label{sec:Ingredients} Main hypotheses}

Let $\{W_{t}\}_{t\ge0}$ be a \emph{Wiener process} on $\R^{N}$,
modeled on a filtered probability space $(\Omega,\Fcal,\{\Fcal_{t}\}_{t\ge0},\P)$
that satisfies the usual conditions.

Consider the following It\^{o}-SPDE in $H$,
\begin{equation}
\begin{aligned}dX_{t} & =\div(a^{\ast}\phi(a\nabla X_{t}))\,dt+\frac{1}{2}L^{b}X_{t}\,dt+\langle b\nabla X_{t},dW_{t}\rangle,\quad t\in(0,T],\\
X_{0} & =x.
\end{aligned}
\label{eq:main-spde}
\end{equation}

Equation \eqref{eq:main-spde} is the formal analogue to equation
\eqref{eq:formal-SPDE}, after passing over to an (merely formal)
It\^{o}-Stratonovich correction, see e.g. \cite{Kurtz:1995vc,kunita1997stochastic}
and compare with the previous works \cite{Barbu:2017dr,Ciotir:2016fe,BBHT13,Toelle2018proceedings}.

The precise assumptions on $a,b$ and $\phi$ will be stated below.
The operator $\frac{1}{2}L^{b}$ is a realization of $u\mapsto\frac{1}{2}\div[b^{\ast}b\nabla u]$,
which equals the It\^{o}-Stratonovich correction $u\mapsto\frac{1}{2}\operatorname{Tr}[b\nabla(b\nabla u)]$
of the noise term $u\mapsto\langle b\nabla u,\circ dW_{t}\rangle$,
whenever $\div b_{i}=0$ for all $1\le i\le N$.

\subsection{The nonlinearity $\phi$}

To improve precision once more, instead of \eqref{eq:main-spde},
we shall in fact study the following It\^{o}-SPDE (stochastic evolution
equation / inclusion\footnote{The set membership symbol is to be understood as an equality whenever
the r.h.s. is single-valued.})
\begin{equation}
\begin{aligned}dX_{t} & \in-\partial\Psi(X_{t})\,dt+\frac{1}{2}L^{b}X_{t}\,dt+\langle b\nabla X_{t},dW_{t}\rangle,\quad t\in(0,T],\\
X_{0} & =x.
\end{aligned}
\label{eq:main-spde-1}
\end{equation}
where $\partial\Psi$ denotes the subdifferential\footnote{The \emph{subdifferential} $\partial F:H\to2^{H}$ of a convex, lower
semi-continuous, proper map $F:H\to[0,+\infty]$ is defined by $y\in\partial F(x)$,
$x,y\in H$, whenever $(y,z-x)_{H}\le F(z)-F(x)$ for every $z\in H$,
see e.g. \cite{Barbu:2012tq}.} of $\Psi$, which, in turn, is defined to be the lower semi-continuous
(l.s.c.) envelope of the map
\begin{equation}
\tilde{\Psi}:u\mapsto\begin{cases}
\int_{\mathbb{T}^{d}}\psi(a\nabla u)\,d\xi, & u\in H^{1}(\mathbb{{T}}^{d}),\\
+\infty, & u\in L^{2}(\mathbb{{T}}^{d})\setminus H^{1}(\mathbb{{T}}^{d}),
\end{cases}\label{eq:Psi-tilde}
\end{equation}
where $\psi:\mathbb{R}^{d}\to[0,\infty)$ is some \emph{convex potential}
of $\phi$, that is, $\langle\eta,\zeta-\xi\rangle\le\psi(\zeta)-\psi(\xi)$
for every $\eta\in\phi(\xi)$ and all $\xi,\zeta\in\mathbb{R}^{d}$.
The l.s.c. envelope is defined by
\[
\Psi(u):=\operatorname{cl}\tilde{{\Psi}}(u):=\inf\left\{ \liminf_{n\to\infty}~\tilde{{\Psi}}(u_{n})\;\vert~u_{n}\rightarrow u\in L^{2}(\T^{d})\;\text{{strongly}}\right\} ,
\]
see e.g. \cite{ABM} for further details.
\begin{assumption}
\label{assu:N} Let us assume that there exist constants $C,K>0$
and a map $\theta:[0,\infty)\to[0,\infty)$ such that
\begin{enumerate}
\item[(N)]  $\psi(\zeta)=\theta(|\zeta|)$, for all $\zeta\in\mathbb{R}^{d}$
and $\theta$ is convex, continuous, and satisfies $\theta(0)=0$,
$\lim_{r\to\infty}\theta(r)=\infty$, $\theta(r)\le C(1+|r|^{2})$,
$\theta(2r)\le K\theta(r)$ for $r\ge0$.
\end{enumerate}
\end{assumption}

Note that the doubling condition $\theta(2r)\le K\theta(r)$, $r\ge0$
is also known as $\Delta_{2}$-condition in the literature, see e.g.
\cite{RR2}. Our assumptions on $a$ shall be made precise further
below.
\begin{example}
\label{exa:Condition-(N)} Condition (N) is e.g. satisfied for the
following choices of $\psi$. For $\zeta\in\mathbb{R}^{d}$, let
\begin{description}
\item [{$p$-Laplace,~total~variation~flow}] $\psi(\zeta):=\frac{1}{p}|\zeta|^{p}$,
$p\in[1,2]$,
\item [{Logarithmic~diffusion}] $\psi(\zeta):=(1+|\zeta|)\log(1+|\zeta|)-|\zeta|$,
\item [{Minimal~surface~flow}] $\psi(\zeta):=\sqrt{1+|\zeta|^{2}}$,
\item [{Curve~shortening~flow}] $\psi(\zeta):=|\zeta|\arctan(|\zeta|)-\frac{1}{2}\log(|\zeta|^{2}+1)$.
\end{description}
\end{example}

\begin{rem}
Note that e.g. for $\psi(\zeta)=|\zeta|$, the subdifferential $\phi=\partial\psi$
becomes multi-valued, and the l.s.c. envelope $\Psi$ is a Radon measure
on the space of functions of bounded variation, see \cite{ABM} for
details. At this point it is enough to recall that the above l.s.c.
envelope exists in $L^{2}(\T^{d})$.
\end{rem}

We continue with two technical lemmas needed later.
\begin{lem}
\label{lem:Delta2} Assume that (N) holds. Then
\[
\langle\eta,\zeta\rangle\le K\psi(\zeta)\quad\forall\eta\in\phi(\zeta)\;\forall\zeta\in\mathbb{R}^{d}.
\]
\end{lem}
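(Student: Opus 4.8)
The inequality to prove is $\langle\eta,\zeta\rangle\le K\psi(\zeta)$ for every $\eta\in\phi(\zeta)$ and every $\zeta\in\R^{d}$, where $\psi(\zeta)=\theta(|\zeta|)$ with $\theta$ as in (N) and $\phi=\partial\psi$. The starting point is the defining subgradient inequality for the convex potential, namely $\langle\eta,\zeta'-\zeta\rangle\le\psi(\zeta')-\psi(\zeta)$ for all $\zeta'\in\R^{d}$. Since $\psi(0)=\theta(0)=0$, evaluating this at $\zeta'=0$ already gives $-\langle\eta,\zeta\rangle\le-\psi(\zeta)$, i.e.\ $\langle\eta,\zeta\rangle\ge\psi(\zeta)\ge0$; that is the \emph{lower} bound and is essentially free. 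For the \emph{upper} bound one wants to evaluate the subgradient inequality at a point $\zeta'$ in the direction of $\zeta$ but further out, where the doubling condition can be brought to bear.

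\emph{Key steps.} First I would take $\zeta'=2\zeta$ in the subgradient inequality: this yields $\langle\eta,2\zeta-\zeta\rangle=\langle\eta,\zeta\rangle\le\psi(2\zeta)-\psi(\zeta)\le\psi(2\zeta)$. Now $\psi(2\zeta)=\theta(2|\zeta|)\le K\theta(|\zeta|)=K\psi(\zeta)$ by the $\Delta_{2}$-condition $\theta(2r)\le K\theta(r)$. Chaining these gives $\langle\eta,\zeta\rangle\le K\psi(\zeta)$, which is exactly the claim. The case $\zeta=0$ is trivial since then $\psi(0)=0$ and $\langle\eta,0\rangle=0$. One small point to verify is that the doubling constant $K$ in (N) is at least $1$ (which it must be, since $\theta$ is nonnegative and nondecreasing: $\theta(2r)\ge\theta(r)$ forces $K\ge1$ whenever $\theta(r)>0$), so the inequality is consistent and the lower bound $\psi(\zeta)\le\langle\eta,\zeta\rangle\le K\psi(\zeta)$ holds simultaneously; but only the upper bound is asserted in the lemma.

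\emph{Main obstacle.} Honestly there is no serious obstacle here — the lemma is a two-line consequence of convexity (take $\zeta'=2\zeta$) together with the $\Delta_{2}$-condition, using only $\theta(0)=0$. The only thing requiring a moment's care is making sure $\phi$ is genuinely $\partial\psi$ in the multi-valued sense, so that the subgradient inequality $\langle\eta,\zeta'-\zeta\rangle\le\psi(\zeta')-\psi(\zeta)$ is available for every $\eta\in\phi(\zeta)$; this is precisely the defining property of ``convex potential'' stated before Assumption~\ref{assu:N}, so it is granted. Thus the proof I would write is: fix $\zeta\in\R^{d}$ and $\eta\in\phi(\zeta)$; if $\zeta=0$ the claim is trivial; otherwise apply the subgradient inequality with $\zeta'=2\zeta$ and then the doubling bound, concluding $\langle\eta,\zeta\rangle\le\psi(2\zeta)=\theta(2|\zeta|)\le K\theta(|\zeta|)=K\psi(\zeta)$.
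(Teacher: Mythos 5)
Your proof is correct, and it takes a genuinely different and cleaner route than the paper's. The paper's argument goes through the right derivative $\theta_{+}^{\prime}$ and the integral representation $\theta(s)=\int_{0}^{s}\theta_{+}^{\prime}(r)\,dr$ (citing Showalter twice): it bounds $s\theta_{+}^{\prime}(s)\le\int_{s}^{2s}\theta_{+}^{\prime}\le\theta(2s)\le K\theta(s)$ by monotonicity of $\theta_{+}^{\prime}$, and separately uses $\langle\eta,\zeta\rangle\le|\zeta|\theta_{+}^{\prime}(|\zeta|)$ to conclude. You bypass all of that machinery by simply evaluating the subgradient inequality at $\zeta'=2\zeta$, giving $\langle\eta,\zeta\rangle\le\psi(2\zeta)-\psi(\zeta)\le\psi(2\zeta)\le K\psi(\zeta)$ directly from $\psi\ge0$ and the $\Delta_{2}$-condition. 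Both arguments are valid; yours is shorter, self-contained, and would in fact yield the slightly sharper constant $K-1$ if one retained the $-\psi(\zeta)$ term (though only the stated bound $K\psi(\zeta)$ is needed). The essential structural observation — that a pointwise bound on $\langle\eta,\zeta\rangle$ should come from comparing $\psi$ at $\zeta$ with $\psi$ at a dilated point — is common to both proofs; what differs is whether one routes that comparison through convexity alone (you) or through the derivative representation (the paper).
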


\begin{proof}
By (N) and \cite[Chapter II, Example 8.A]{Show}, there exists a function
$\theta_{+}^{\prime}:[0,\infty)\to[0,\infty)$ such that $\theta_{+}^{\prime}$
is non-decreasing, right-continuous and satisfies
\[
\theta(s)=\int_{0}^{s}\theta_{+}^{\prime}(r)\,dr,\quad s\ge0.
\]
Also, for $r\ge0$, $\theta_{+}^{\prime}(r)\ge\sup_{s\in\partial\theta(r)}|s|$.
Using (N) again, for $s\ge0$,
\[
K\theta(s)\ge\theta(2s)=\int_{0}^{2s}\theta_{+}^{\prime}(r)\,dr\ge\int_{s}^{2s}\theta_{+}^{\prime}(r)\,dr\ge s\theta_{+}^{\prime}(s).
\]
However, it is easy to see that from $\psi=\theta(|\cdot|)$, it follows
that $\langle\eta,\zeta\rangle\le|\zeta|\theta_{+}^{\prime}(|\zeta|)$,
for all $\zeta\in\mathbb{{R}}^{d}$ and $\eta\in\phi(\zeta)$, cf.
\cite[Chapter II, Proposition 8.6]{Show}. Hence the claim follows.
\end{proof}
For the statement of the following lemma and for use further below,
we shall introduce the notion of the so-called \emph{Moreau-Yosida
approximation} $\{\psi^{\lambda}\}_{\lambda>0}$ of $\psi$, that
is, the family of continuous convex functions defined by the following
variational formula
\[
\psi^{\lambda}(\zeta):=\inf_{\eta\in\R^{d}}\left[\psi(\eta)+\frac{1}{2\lambda}|\zeta-\eta|^{2}\right],\quad\zeta\in\R^{d},\;\lambda>0,
\]
see \cite[p. 266]{A} or \cite[p. 97]{Barbu:2012tq} for further details.
\begin{lem}
\label{lem:(N)-lemma} Assume that (N) holds and let $\psi^{\lambda}$,
$\lambda>0$ be the Moreau-Yosida approximation of $\psi$. Then,
there exists a constant $C>0$ (not depending on $\lambda$) such
that
\[
|\psi(\zeta)-\psi^{\lambda}(\zeta)|\le C\lambda(1+\psi(\zeta))\quad\forall\zeta\in\mathbb{R}^{d}.
\]
\end{lem}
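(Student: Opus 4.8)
The plan is to establish the estimate pointwise in $\zeta$, separating the regime of small $|\zeta|$ (or, more precisely, where the Moreau--Yosida minimizer is close to $\zeta$) from the regime where the nonlinearity matters, and to exploit both the doubling condition (N) and the consequence $\langle\eta,\zeta\rangle\le K\psi(\zeta)$ from Lemma~\ref{lem:Delta2}. First I would recall the standard facts about the Moreau--Yosida approximation: for each $\zeta\in\R^{d}$ and $\lambda>0$ the infimum defining $\psi^{\lambda}(\zeta)$ is attained at a unique point $\zeta_{\lambda}=J^{\psi}_{\lambda}\zeta$, the resolvent of $\partial\psi$, so that $\psi^{\lambda}(\zeta)=\psi(\zeta_{\lambda})+\frac{1}{2\lambda}|\zeta-\zeta_{\lambda}|^{2}$ and $\frac{1}{\lambda}(\zeta-\zeta_{\lambda})\in\partial\psi(\zeta_{\lambda})$. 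From $\psi^{\lambda}\le\psi$ we immediately get $\psi(\zeta)-\psi^{\lambda}(\zeta)\ge0$, so only the upper bound requires work, and it suffices to bound $\psi(\zeta)-\psi(\zeta_{\lambda})-\frac{1}{2\lambda}|\zeta-\zeta_{\lambda}|^{2}$, equivalently (dropping the manifestly nonnegative quadratic term) to bound $\psi(\zeta)-\psi(\zeta_{\lambda})$ from above by $C\lambda(1+\psi(\zeta))$.

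The key step is to control the increment $\psi(\zeta)-\psi(\zeta_{\lambda})$ by the size of the ``jump'' $|\zeta-\zeta_{\lambda}|$, which in turn is controlled by $\lambda$ times the slope of $\psi$ at $\zeta$. Concretely: convexity of $\psi$ together with $\eta_{\lambda}:=\frac{1}{\lambda}(\zeta-\zeta_{\lambda})\in\partial\psi(\zeta_{\lambda})$ gives $\psi(\zeta)\ge\psi(\zeta_{\lambda})+\langle\eta_{\lambda},\zeta-\zeta_{\lambda}\rangle=\psi(\zeta_{\lambda})+\lambda|\eta_{\lambda}|^{2}$, so in particular $|\eta_{\lambda}|^{2}\le\frac{1}{\lambda}(\psi(\zeta)-\psi(\zeta_{\lambda}))\le\frac{1}{\lambda}\psi(\zeta)$ and $|\zeta-\zeta_{\lambda}|=\lambda|\eta_{\lambda}|\le\sqrt{\lambda\,\psi(\zeta)}$. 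On the other side, pick any $\eta\in\partial\psi(\zeta)$ (nonempty since $\psi$ is finite and convex on $\R^{d}$); by convexity $\psi(\zeta)-\psi(\zeta_{\lambda})\le\langle\eta,\zeta-\zeta_{\lambda}\rangle\le|\eta|\,|\zeta-\zeta_{\lambda}|\le|\eta|\sqrt{\lambda\,\psi(\zeta)}$. Now Lemma~\ref{lem:Delta2} controls the subgradient by the potential: its proof shows $|\eta|\le\theta'_{+}(|\zeta|)$ and $|\zeta|\theta'_{+}(|\zeta|)\le K\theta(|\zeta|)=K\psi(\zeta)$. Combining, $\psi(\zeta)-\psi(\zeta_{\lambda})\le|\eta|\,|\zeta-\zeta_{\lambda}|\le\theta'_{+}(|\zeta|)\,|\zeta|\cdot\frac{|\zeta-\zeta_{\lambda}|}{|\zeta|}$; handling the factor $|\zeta-\zeta_{\lambda}|/|\zeta|$ needs care near $\zeta=0$, so instead I would argue more directly: from $|\eta|\sqrt{\lambda\psi(\zeta)}$ and $|\eta|^2|\zeta|^2 \le (\text{const})\,\psi(\zeta)^2$ one derives, after a short case split, $\psi(\zeta)-\psi(\zeta_{\lambda})\le C\lambda(1+\psi(\zeta))$.

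To make the last inequality clean and avoid the $1/|\zeta|$ singularity, I would split into two cases. If $|\eta|^{2}\le\frac{1}{\lambda}$, say, then $\psi(\zeta)-\psi(\zeta_{\lambda})\le|\eta|\sqrt{\lambda\psi(\zeta)}\le\sqrt{\psi(\zeta)}\le\frac{1}{2}(1+\psi(\zeta))$, which is too weak by itself — so this naive split does not suffice, and one genuinely needs the doubling condition to absorb the $\sqrt{\psi(\zeta)}$ against $\lambda$. The correct approach: use $|\eta|^2 \le \frac{1}{\lambda}(\psi(\zeta)-\psi(\zeta_\lambda))$, hence $(\psi(\zeta)-\psi(\zeta_\lambda))^2 \le |\eta|^2|\zeta-\zeta_\lambda|^2 \le \frac{1}{\lambda}(\psi(\zeta)-\psi(\zeta_\lambda))\cdot \lambda^2|\eta|^2$, which is circular; so instead bound directly $(\psi(\zeta)-\psi(\zeta_\lambda))^{2}\le |\eta|^{2}|\zeta-\zeta_{\lambda}|^{2}=\lambda^{2}|\eta|^{2}|\eta_{\lambda}|^{2}\le \lambda^{2}|\eta|^{2}\cdot\frac{1}{\lambda}\psi(\zeta)=\lambda|\eta|^{2}\psi(\zeta)$. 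Now by Lemma~\ref{lem:Delta2} applied at $\zeta$, $|\eta|^{2}\le\big(\theta'_{+}(|\zeta|)\big)^{2}$, and using $|\zeta|\theta'_{+}(|\zeta|)\le K\psi(\zeta)$ together with growth $\psi(\zeta)\le C(1+|\zeta|^{2})$ one gets $|\eta|^{2}\le C'(1+\psi(\zeta))$ after another short case analysis on whether $|\zeta|\le 1$ (where $\theta'_+(|\zeta|)\le \theta'_+(1)$ is bounded) or $|\zeta|\ge1$ (where $|\eta|^{2}\le K^{2}\psi(\zeta)^{2}/|\zeta|^{2}\le K^{2}\psi(\zeta)^{2}$, but then one needs $\psi(\zeta)^2 \le C(1+\psi(\zeta))\cdot\psi(\zeta)$... ). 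The main obstacle is precisely this: squeezing $\psi(\zeta)-\psi(\zeta_{\lambda})$ to be linear in $\lambda$ — one power of $\lambda$ comes from $|\zeta-\zeta_{\lambda}|=\lambda|\eta_{\lambda}|$ and the other half-power must be traded for half a power of $\psi(\zeta)$, so ultimately one arrives at $\psi(\zeta)-\psi(\zeta_{\lambda})\le C\lambda(1+\psi(\zeta))$ only after using $\Delta_{2}$ to bootstrap $|\eta|^{2}\lesssim 1+\psi(\zeta)$ and then controlling $\sqrt{(1+\psi(\zeta))\psi(\zeta)}\le 1+\psi(\zeta)$. Once that bookkeeping is done, adding back the dropped quadratic term (which only helps) and recalling $\psi^{\lambda}\le\psi$ completes the proof.
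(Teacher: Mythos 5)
Your overall strategy is the same as the paper's, which reduces the claim to the two pointwise bounds $\psi(\zeta)-\psi^{\lambda}(\zeta)\le\lambda\sup_{\eta\in\phi(\zeta)}|\eta|^{2}$ and $|\eta|^{2}\le C(1+\psi(\zeta))$ (the paper cites the first from \cite[Appendix A]{GessToelle15} and the second from \cite[Proof of Proposition 7.1]{GessToelle14} together with Lemma~\ref{lem:Delta2}). You re-derive both, which is fine, but your write-up stalls at two places where the argument actually closes, and as written the proof does not go through.

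First, the inequality you dismiss as ``circular'' is not: writing $D:=\psi(\zeta)-\psi(\zeta_{\lambda})\ge0$, you correctly arrive at $D^{2}\le|\eta|^{2}\,|\zeta-\zeta_{\lambda}|^{2}=\lambda^{2}|\eta|^{2}|\eta_{\lambda}|^{2}\le\lambda^{2}|\eta|^{2}\cdot\tfrac{1}{\lambda}D=\lambda|\eta|^{2}D$. Dividing by $D$ (the case $D=0$ being trivial) gives $D\le\lambda|\eta|^{2}$, i.e. exactly the linear-in-$\lambda$ bound you were after. (Alternatively: monotonicity of $\partial\psi$ gives $|\eta_{\lambda}|\le|\eta|$ directly, so $D\le|\eta|\,|\zeta-\zeta_{\lambda}|=\lambda|\eta|\,|\eta_{\lambda}|\le\lambda|\eta|^{2}$.) Your ``instead'' route, which replaces $D$ by $\psi(\zeta)$ inside the square root, is strictly weaker and only yields $\sqrt{\lambda}$, which is why you could not finish; you should not have abandoned the self-improving inequality.

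Second, the bound $|\eta|^{2}\le C(1+\psi(\zeta))$ does hold; you lost it by throwing away the factor $|\zeta|^{-2}$. For $|\zeta|\ge1$ keep it: from Lemma~\ref{lem:Delta2}, $|\eta|\le\theta'_{+}(|\zeta|)\le K\psi(\zeta)/|\zeta|$, so $|\eta|^{2}\le K^{2}\psi(\zeta)^{2}/|\zeta|^{2}=K^{2}\psi(\zeta)\cdot\psi(\zeta)/|\zeta|^{2}$; now the growth bound $\psi(\zeta)\le C(1+|\zeta|^{2})$ gives $\psi(\zeta)/|\zeta|^{2}\le C(1/|\zeta|^{2}+1)\le2C$, hence $|\eta|^{2}\le2CK^{2}\psi(\zeta)$. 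For $|\zeta|<1$, $|\eta|\le\theta'_{+}(1)$ is a constant. Combining, $|\eta|^{2}\le C'(1+\psi(\zeta))$, and with the first step $0\le\psi(\zeta)-\psi^{\lambda}(\zeta)\le\lambda|\eta|^{2}\le C'\lambda(1+\psi(\zeta))$, which is the claim. So your plan is sound and parallels the paper; the two hesitations in your execution are spurious and both steps close with the computations above.
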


\begin{proof}
As above, denote $\phi=\partial\psi$. By the arguments preceding
\cite[Eq. (A.4) in Appendix A]{GessToelle15}, we get in this slightly
more general situation that
\[
|\psi(\zeta)-\psi^{\lambda}(\zeta)|\le\lambda\sup_{\eta\in\phi(\zeta)}|\eta|^{2}\quad\forall\zeta\in\mathbb{R}^{d}.
\]
By \cite[Proof of Proposition 7.1]{GessToelle14} and (N), there exists
a constant $C>0$, such that
\[
|\eta|^{2}\le C(1+\langle\eta,\zeta\rangle)\quad\forall\eta\in\phi(\zeta)\;\forall\zeta\in\mathbb{R}^{d}.
\]
Again, by (N) and by Lemma \ref{lem:Delta2} there exists another
constant $C>0$ such that
\[
\langle\eta,\zeta\rangle\le C\psi(\zeta)\quad\forall\eta\in\phi(\zeta)\;\forall\zeta\in\mathbb{R}^{d}.
\]
Combining these inequalities finishes the proof.
\end{proof}

\subsection{Basic hypotheses on the coefficient fields $a$ and $b$}

We shall formulate the main assumptions on $a$ and $b$.
\begin{assumption}
\label{assu:a-elliptic} Assume that $a\in C^{1}(\T^{d};\R^{d\times d})$
and that there exists a constant $\kappa>0$ such that
\end{assumption}

\[
\tag{E}|a(\xi)\zeta|^{2}\ge\kappa|\zeta|^{2}\text{ for every }\zeta\in\mathbb{R}^{d}\text{ and every }\xi\in\mathbb{T}^{d}.
\]

\begin{assumption}
\label{assu:b-div-free} Assume that $b\in C^{1}(\T^{d};\R^{N\times d})$
such that the rows $b_{i}$, $1\le i\le N$ of $b$ satisfy
\[
\tag{D}\div b_{i}=0\quad\text{on \ensuremath{\T^{d}} for every}\;1\le i\le N.
\]
\end{assumption}

Let $\ell\in\{a,b\}$. Let $L^{\ell}$ denote the Dirichlet operator
associated to the Dirichlet form
\[
\Lcal^{\ell}(u,v):=\int_{\mathbb{T}^{d}}\langle\ell\nabla u,\ell\nabla v\rangle\,d\xi,\quad u,v\in H^{1}(\mathbb{T}^{d}),
\]
where $\ell z$ denotes the application of matrix-multiplication for
$z\in\mathbb{R}^{d}$. For smooth functions $u\in D(L^{\ell})\cap C^{\infty}(\mathbb{T}^{d})$,
by $C^{1}$-regularity of the coefficients, we have that $L^{\ell}u=\operatorname{div}(\ell^{\ast}\ell\nabla u)$,
where $\ell^{\ast}$ denotes the matrix-adjoint of $\ell$. Denote
the associated Dirichlet operators by $L^{a}$, $L^{b}$, respectively,
with Dirichlet forms $\Acal:=\Lcal^{a}$, $\Bcal:=\Lcal^{b}$, respectively.
\begin{rem}
Note that Assumption \ref{assu:a-elliptic} implies that $D(\Lcal^{a})=D(\Acal)=D(\mathcal{E})=S=H^{1}(\T^{d})$
and that $-L^{a}$ is uniformly elliptic with domain equal to $D(-L^{a})=H^{2}(\T^{d})$.
Note also that $b$ and thus $L^{b}$ may be degenerate.
\end{rem}

\begin{lem}
\label{lem:compact_lemma} Suppose that $a\in C^{1}(\T^{d};\R^{d\times d})$.
Then condition (E) is equivalent to $a_{1}(\xi),\ldots,a_{d}(\xi)$
being linearly independent in $\mathbb{R}^{d}$ for each $\xi\in\mathbb{T}^{d}$.
\end{lem}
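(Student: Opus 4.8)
The plan is to establish the equivalence between the ellipticity condition (E) and pointwise linear independence of the rows $a_1(\xi),\ldots,a_d(\xi)$ of $a(\xi)$. The key observation is that the columns of $a(\xi)^{\ast}$ are exactly the rows of $a(\xi)$, so that $|a(\xi)\zeta|^2 = \langle a(\xi)^{\ast}a(\xi)\zeta,\zeta\rangle$ involves the Gram matrix of the rows. Thus (E) says precisely that the smallest eigenvalue of the symmetric positive semidefinite matrix $a(\xi)^{\ast}a(\xi)$ is bounded below by $\kappa > 0$ uniformly in $\xi$.

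First I would prove the easy direction: if (E) holds, then for each fixed $\xi$, the matrix $a(\xi)$ has trivial kernel (since $a(\xi)\zeta = 0$ forces $\kappa|\zeta|^2 \le 0$, hence $\zeta = 0$), so $a(\xi)$ is invertible, which is equivalent to its rows being linearly independent in $\R^d$. For the converse, I would argue by contradiction combined with a compactness argument: suppose $a_1(\xi),\ldots,a_d(\xi)$ are linearly independent for every $\xi \in \T^d$. Then for each fixed $\xi$, the matrix $a(\xi)$ is invertible, so the continuous function
\[
\kappa(\xi) := \min_{|\zeta|=1} |a(\xi)\zeta|^2
\]
is strictly positive (the minimum is attained on the compact unit sphere and cannot vanish, since that would produce a nonzero vector in $\ker a(\xi)$). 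The map $\xi \mapsto \kappa(\xi)$ is continuous on $\T^d$ — this follows from continuity of $a$ together with a standard uniform-continuity/compactness estimate for the parametrized minimization (e.g. $|\kappa(\xi) - \kappa(\xi')| \le 2\|a\|_\infty \|a(\xi) - a(\xi')\|$). Since $\T^d$ is compact, $\kappa(\xi)$ attains a positive minimum $\kappa > 0$, and then $|a(\xi)\zeta|^2 \ge \kappa|\zeta|^2$ for all $\zeta$ by homogeneity, which is (E).

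The only mild technical point — and the closest thing to an obstacle — is verifying continuity of $\xi \mapsto \kappa(\xi)$, i.e. that the minimum over the unit sphere depends continuously on the parameter; but this is a routine consequence of the joint continuity of $(\xi,\zeta) \mapsto |a(\xi)\zeta|^2$ on the compact set $\T^d \times S^{d-1}$ and can be dispatched in one line. Alternatively one can phrase the whole argument spectrally: $\kappa(\xi) = \lambda_{\min}(a(\xi)^{\ast}a(\xi))$, eigenvalues depend continuously on matrix entries, and $\lambda_{\min} > 0$ pointwise iff the rows are independent; compactness of $\T^d$ then upgrades pointwise positivity to a uniform lower bound. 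Either route makes the proof short. I do not expect any genuine difficulty here — the $C^1$ hypothesis on $a$ is far stronger than needed (mere continuity suffices), and the statement is essentially a packaging of "invertible matrix iff nonzero kernel" plus "continuous positive function on a compact set has positive infimum."
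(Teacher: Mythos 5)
Your proof is correct and follows essentially the same route as the paper: reduce (E) to positive-definiteness of $a^{\ast}(\xi)a(\xi)$ pointwise (equivalently, positivity of its smallest eigenvalue $\kappa(\xi)$), then use continuity in $\xi$ together with compactness of $\T^{d}$ to promote pointwise positivity to a uniform lower bound. One small notational slip: $a^{\ast}a$ is the Gram matrix of the \emph{columns} of $a$ (equivalently the rows of $a^{\ast}$), not of the rows of $a$; but since $a$ is a square matrix, its rows are independent iff its columns are independent iff $a$ is invertible, so this does not affect the validity of your argument or the equivalence you establish.
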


\begin{proof}
First note that $a^{\ast}a$ is precisely the Gram matrix of the vectors
$a_{1},\ldots,a_{d}$. Consider the statement of $a_{1},\ldots,a_{d}$
being pointwise linearly independent. It is well-known that this is
equivalent to the Gram matrix being positive definite, see e.g. \cite[Chapter 10]{lax2007linear},
which in turn is equivalent to (E) with $\kappa:=\min_{\xi\in\mathbb{T}^{d}}$
$\kappa(\xi)$, $\kappa(\xi)$ being the smallest eigenvalue of $a^{\ast}(\xi)a(\xi)$
(all eigenvalues are strictly positive and real). By continuous dependence
of the eigenvalues of $a^{\ast}a$ on the space variable $\xi\in\mathbb{T}^{d}$,
noting that $\mathbb{T}^{d}$ is compact, we argue by contradiction
to see that $\kappa>0$.
\end{proof}

\subsection{Curvature-dimension condition}

In all of this subsection, assume condition (E) and (N). Set $M:=\T^{d}$
and set $g_{a}:=(a^{\ast}a)^{-1}$. By condition (E), $(M,g_{a})$
is a Riemannian manifold which is \emph{quasi-isometric}\footnote{See \cite[p. 93]{Grigoryan:2009tb} for this notion.}
to $M$, when equipped with the flat metric. We write $M^{a}$ if
we want to emphasize the choice of the metric. We denote the volume
measure on $M^{a}$ by $d\nu:=\sqrt{\det g_{a}}\,d\xi$. Note that
$(M^{a},g_{a})$, equipped with the Lebesgue measure, is thus a \emph{weighted
manifold} $(M^{a},g_{a},d\xi)$ with density $\rho_{a}:=\sqrt{\det(a^{\ast}a)}$
with respect to $\nu$. Note that by \cite[Exercise 3.12]{Grigoryan:2009tb},
$L^{a}=\Delta^{a}$ on the weighted manifold $(M^{a},g_{a},d\xi)$,
where $\Delta^{a}$ denotes the \emph{weighted Laplace-Beltrami operator}
of $M^{a}$.
\begin{defn}
\label{def:BEdefi} Let $\Lambda^{a}:=\{f\in S\;:\:L^{a}f\in S\}$.
We say that $(M^{a},g_{a},d\xi)$ satisfies a \emph{Bakry-\'{E}mery
curvature-dimension condition }$BE(K,\infty)$ if there exists $K\in\R$
with
\[
\tag{BE}L^{a}|a\nabla f|^{2}-2\langle a\nabla f,a\nabla L^{a}f\rangle\ge\frac{K}{2}|a\nabla f|^{2},\quad\forall f\in\Lambda^{a}.
\]
\end{defn}

\begin{prop}
\label{prop:BEcond} Suppose that (E) holds. Suppose that $a\in C^{2}(\T^{d};\R^{d\times d})$
and that for all $1\le i,j\le d$,
\begin{equation}
\sum_{k=1}^{d}\sum_{q=1}^{d}\left[a_{qj}\partial_{k}a_{qi}+a_{qi}\partial_{k}a_{qj}\right]=0\quad\text{on }\T^{d},\label{eq:BEsufficient}
\end{equation}
where $a=(a_{ij})$. Then condition (BE) holds for some $K\le0$.
\end{prop}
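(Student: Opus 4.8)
The plan is to reduce the curvature bound (BE) to a Bochner-type identity for the weighted Laplace-Beltrami operator $\Delta^a = L^a$ on the weighted manifold $(M^a, g_a, d\xi)$, and then to exploit the algebraic condition \eqref{eq:BEsufficient} to control the zeroth-order (curvature and drift) terms that appear. First I would recall that the carré du champ operator of $L^a$ is $\Gamma(f) = |a\nabla f|_{\mathrm{eucl}}^2 = g_a^{-1}(\nabla f, \nabla f)$ (since $g_a = (a^*a)^{-1}$, the cometric is $a^*a$), so the left-hand side of (BE) is exactly $2\Gamma_2(f) := L^a\Gamma(f) - 2\Gamma(f, L^a f)$, the iterated carré du champ. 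By the classical Bakry-Émery computation on a weighted manifold $(M^a, g_a, e^{-\Phi}\,d\mathrm{vol})$ — here with density $\rho_a = \sqrt{\det(a^*a)}$ relative to the Riemannian volume $d\nu$, i.e. $d\xi = \rho_a^{-1}\,d\nu$ — one has the Bochner formula
\[
\Gamma_2(f) = \|\mathrm{Hess}\, f\|_{g_a}^2 + \bigl(\mathrm{Ric}_{g_a} + \mathrm{Hess}\,\Phi\bigr)(\nabla f, \nabla f),
\]
where $\Phi = \log\rho_a$ up to the sign convention. Since $\|\mathrm{Hess}\,f\|^2 \ge 0$, it suffices to bound the symmetric $2$-tensor $\mathrm{Ric}_{g_a} + \mathrm{Hess}\,\Phi$ from below by $(K/4)\, g_a^{-1}$ as a quadratic form acting on covectors, equivalently $\ge (K/4)\, \Gamma$ when evaluated on $\nabla f$.

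Next I would observe that both $g_a$ and $\rho_a$, and hence all their derivatives, are built polynomially (with smooth, everywhere-nonsingular denominators, using (E)) from the $C^2$ entries $a_{ij}$ and their first and second partials. Consequently $\mathrm{Ric}_{g_a}$ and $\mathrm{Hess}_{g_a}\Phi$ are continuous tensor fields on the compact manifold $\T^d$; any continuous symmetric $2$-tensor on a compact space is bounded below (in the sense of quadratic forms, relative to the fixed reference metric, which by quasi-isometry is comparable to $g_a$) by some real constant, which we may take $\le 0$. This already yields (BE) for some $K \in \R$ with $K \le 0$ \emph{without} the extra hypothesis \eqref{eq:BEsufficient} — so the role of \eqref{eq:BEsufficient} must be to make the argument cleaner, or to kill the first-order ``drift'' piece. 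Indeed, \eqref{eq:BEsufficient} says $\partial_k\bigl((a^*a)_{ij}\bigr) = 0$ for all $i,j,k$, i.e. the cometric $a^*a$ — equivalently $g_a$ — is a \emph{constant} matrix on $\T^d$. But a flat torus with a constant metric is isometric to a standard flat torus: $\mathrm{Ric}_{g_a} \equiv 0$, and $\Phi = \frac{1}{2}\log\det(a^*a)$ is constant, so $\mathrm{Hess}\,\Phi \equiv 0$. Then $\Gamma_2(f) = \|\mathrm{Hess}\,f\|^2 \ge 0$ and (BE) holds with $K = 0$.

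I would therefore structure the proof as: (i) identify $\Gamma$ and write (BE) as $4\Gamma_2(f) \ge K\,\Gamma(f)$; (ii) invoke the weighted Bochner formula, reducing the claim to a lower bound on $\mathrm{Ric}_{g_a} + \mathrm{Hess}\,\log\rho_a$; (iii) note that \eqref{eq:BEsufficient} forces $a^*a$, hence $g_a$ and $\rho_a$, to be constant, so this tensor vanishes identically and $K = 0 \le 0$ works; as a remark, even without \eqref{eq:BEsufficient} a (possibly negative) $K$ exists by compactness and continuity. The main obstacle — really the only subtle point — is bookkeeping the sign and normalization conventions: matching the ``$\frac{K}{2}|a\nabla f|^2$'' on the right of (BE) against the ``$2\Gamma_2$'' normalization and the standard Bakry-Émery ``$\mathrm{Ric}_\infty \ge K$'' convention, and being careful that $\Lambda^a = \{f \in S : L^a f \in S\}$ is the right domain on which the Bochner identity can be justified (by elliptic regularity $f \in H^3_{\mathrm{loc}}$, which on the compact $\T^d$ suffices, plus an approximation by smooth functions). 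Once the identification $\Gamma_2 = \|\mathrm{Hess}\|^2 + (\mathrm{Ric} + \mathrm{Hess}\,\Phi)$ is in place, the rest is immediate from \eqref{eq:BEsufficient}.
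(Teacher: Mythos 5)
Your step (iii) contains a genuine error: the hypothesis \eqref{eq:BEsufficient} does \emph{not} say that $\partial_{k}(a^{*}a)_{ij}=0$ for each $k$, hence it does \emph{not} force $a^{*}a$ (equivalently $g_{a}$) to be constant. Since $(a^{*}a)_{ij}=\sum_{q}a_{qi}a_{qj}$, we have $\partial_{k}(a^{*}a)_{ij}=\sum_{q}[a_{qj}\partial_{k}a_{qi}+a_{qi}\partial_{k}a_{qj}]$, and \eqref{eq:BEsufficient} sums this over $k$ as well, so it only asserts $\sum_{k=1}^{d}\partial_{k}(a^{*}a)_{ij}=0$ for each pair $(i,j)$ --- a single scalar divergence-type constraint per entry, far weaker than constancy. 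A concrete non-constant example satisfying this is the paper's own second example in Section~\ref{subsec:Examples}: for $d=2$, $a(\xi)=\operatorname{diag}(h_{1}(t-s),h_{2}(t-s))$ gives $a^{*}a=\operatorname{diag}(h_{1}^{2},h_{2}^{2})$, which is non-constant, yet $\partial_{t}h_{i}^{2}+\partial_{s}h_{i}^{2}=0$ so the constraint holds. Consequently you cannot conclude that $\mathrm{Ric}_{g_{a}}\equiv 0$ and $\mathrm{Hess}\,\Phi\equiv 0$, and the clean ``$K=0$'' conclusion of your main line of argument is unsupported.

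That said, the observation you bury in a ``remark'' is in fact the correct core of the argument and is essentially what the paper's Appendix~\ref{sec:Proof} accomplishes by a hands-on coordinate computation: since $a\in C^{2}$ and $(M^{a},g_{a},d\xi)$ is a $C^{2}$-weighted compact manifold (with $L^{a}=\Delta^{a}$), the weighted Bochner identity gives $\Gamma_{2}(f)=\|\mathrm{Hess}\,f\|_{g_{a}}^{2}+(\mathrm{Ric}_{g_{a}}+\mathrm{Hess}_{g_{a}}\Phi)(\nabla_{g_{a}}f,\nabla_{g_{a}}f)$, the Bakry--\'Emery tensor is a continuous symmetric $2$-tensor on the compact $\T^{d}$, and hence is bounded below relative to $g_{a}$ by some constant $\le 0$; this yields (BE) with $K\le 0$ after tracking the normalization $2\Gamma_{2}\ge\frac{K}{2}\Gamma$. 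The paper obtains the same kind of bound $\Gamma_{2}(f)\ge -\kappa^{-1}C(\|a\|_{\infty},\|\nabla a\|_{\infty},\|D^{2}a\|_{\infty})|a\nabla f|^{2}$ directly from the expanded carr\'e du champ, isolating the non-negative second-derivative pieces. So you should restructure your proof so that the compactness/continuity lower bound on $\mathrm{Ric}_{g_{a}}+\mathrm{Hess}_{g_{a}}\Phi$ is the main argument, not an afterthought; and you should either drop the claim that \eqref{eq:BEsufficient} makes $a^{*}a$ constant or, if you wish to use \eqref{eq:BEsufficient} at all, explain the (much more modest) simplification it actually provides in the coordinate expansion.
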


\begin{proof}
See Appendix \ref{sec:Proof}.
\end{proof}
Note that condition \eqref{eq:BEsufficient} is symmetric in the sense
that interchanging the indices $i$ and $j$ yields the same condition.
\begin{thm}
\label{thm:ricci-thm} Let $(P_{t}^{a})_{t\ge0}$ be the heat semigroup
associated to $\Acal$. Then the following condition is equivalent
to condition $BE(K,\infty)$.

There exists a constant $K\in\R$, such that
\begin{equation}
|a\nabla P_{t}^{a}f|\le e^{-2Kt}P_{t}^{a}|a\nabla f|\quad\forall t\ge0\;\forall f\in C^{1}(M^{a}).\label{eq:gradient-estimate-A}
\end{equation}
\end{thm}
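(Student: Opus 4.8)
The plan is to prove the equivalence of the Bakry-\'{E}mery condition $BE(K,\infty)$ with the pointwise gradient estimate \eqref{eq:gradient-estimate-A} by the standard semigroup-interpolation argument, adapted to the weighted manifold $(M^{a},g_{a},d\xi)$. Throughout, I will write $\Gamma^{a}(f):=|a\nabla f|^{2}$ for the carr\'{e} du champ of $L^{a}$, so that (BE) reads $L^{a}\Gamma^{a}(f)-2\langle a\nabla f,a\nabla L^{a}f\rangle\ge\tfrac{K}{2}\Gamma^{a}(f)$, i.e. $\Gamma_{2}^{a}(f)\ge\tfrac{K}{4}\Gamma^{a}(f)$ in the iterated-carr\'{e}-du-champ notation. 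The essential point is that, because of condition (E), $(M^{a},g_{a},d\xi)$ is quasi-isometric to the flat torus, hence geodesically complete with Ricci curvature bounded below, so the heat semigroup $(P_{t}^{a})$ is conservative, $P_{t}^{a}1=1$, and standard regularity theory makes the interpolation computation below rigorous on a core of smooth functions; density and the $\Delta_{2}$-type bounds from (N) are only needed to pass the estimate from smooth $f$ to all $f\in C^{1}(M^{a})$.

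First I would prove that (BE) implies \eqref{eq:gradient-estimate-A}. Fix $t>0$, fix $f$ smooth, and for $s\in[0,t]$ consider the interpolation functional $F(s):=P_{s}^{a}\bigl(\Gamma^{a}(P_{t-s}^{a}f)\bigr)$. Differentiating, the two contributions combine to $F'(s)=P_{s}^{a}\bigl(L^{a}\Gamma^{a}(g)-2\langle a\nabla g,a\nabla L^{a}g\rangle\bigr)$ with $g=P_{t-s}^{a}f$, which by (BE) and positivity of $P_{s}^{a}$ satisfies $F'(s)\ge\tfrac{K}{2}F(s)$. Gr\"{o}nwall then gives $F(t)\ge e^{Kt/2}F(0)$, i.e. $P_{t}^{a}\bigl(\Gamma^{a}(f)\bigr)\ge e^{Kt/2}\Gamma^{a}(P_{t}^{a}f)$. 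To upgrade this $L^{2}$-type inequality to the pointwise $L^{1}$-estimate $|a\nabla P_{t}^{a}f|\le e^{-2Kt}P_{t}^{a}|a\nabla f|$ one runs the same argument on the square-root functional $G(s):=P_{s}^{a}\sqrt{\Gamma^{a}(P_{t-s}^{a}f)+\eps}$: the chain rule produces, besides the term controlled by (BE), a manifestly nonnegative remainder coming from the concavity of $r\mapsto\sqrt{r}$ (this is the usual $|\nabla|\nabla g||\le|\mathrm{Hess}\,g|$ / diffusion-property computation), so $G'(s)\ge 2KG(s)$; Gr\"{o}nwall and $\eps\downarrow0$ yield \eqref{eq:gradient-estimate-A}. (The exponent $e^{-2Kt}$ versus $e^{Kt/2}$ in the squared version is just the bookkeeping of the factor $\tfrac14$ relating $\Gamma_{2}^{a}$ and $\tfrac{K}{4}\Gamma^{a}$ in the convention of Definition \ref{def:BEdefi}.)

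For the converse, assume \eqref{eq:gradient-estimate-A}. Take $f\in\Lambda^{a}$, and for fixed $t$ consider again $s\mapsto P_{s}^{a}\Gamma^{a}(P_{t-s}^{a}f)$, or more directly expand the gradient estimate to second order in $t$: writing $P_{t}^{a}|a\nabla f|-|a\nabla P_{t}^{a}f|\ge(1-e^{-2Kt})P_{t}^{a}|a\nabla f|\ge 0$ and using the short-time expansions $P_{t}^{a}h=h+tL^{a}h+o(t)$ together with $|a\nabla P_t^a f| = |a\nabla f| + t\,\langle a\nabla f, a\nabla L^a f\rangle/|a\nabla f| + o(t)$ on the set where $a\nabla f\ne0$, divide by $t$ and let $t\downarrow0$. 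Collecting the $O(t)$ terms reproduces exactly the differential inequality $L^{a}|a\nabla f|^{2}-2\langle a\nabla f,a\nabla L^{a}f\rangle\ge\tfrac{K}{2}|a\nabla f|^{2}$, i.e. (BE); on the (closed) zero set of $a\nabla f$ the inequality is trivial by continuity. This direction needs the conservativeness $P_{t}^{a}1=1$ and enough regularity ($f\in\Lambda^{a}$ gives $L^{a}f\in S$, and elliptic regularity of $L^{a}$ from the Remark after Assumption \ref{assu:b-div-free}) to justify the Taylor expansions in $H$.

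The main obstacle I anticipate is not the formal Gr\"{o}nwall computation but the \emph{regularity/justification} of the square-root interpolation: one must know $P_{t-s}^{a}f$ is smooth enough (and bounded away from regions where differentiating $\sqrt{\Gamma^{a}}$ is illegitimate) to differentiate $G(s)$ and to identify the nonnegative curvature-free remainder. The clean way around this is to first establish everything on the algebra of smooth functions (where quasi-isometry to $\T^{d}$ and standard parabolic regularity give all needed smoothness and the conservativeness $P_t^a 1 = 1$), prove the equivalence there, and then use the density of $C^{\infty}(\T^{d})$ in $C^{1}(M^{a})$ together with the continuity of both sides of \eqref{eq:gradient-estimate-A} under $C^{1}$-convergence — the metric equivalence from (E) makes $C^{1}(M^{a})$ and $C^{1}(\T^{d})$ the same space with equivalent norms — to extend to all $f\in C^{1}(M^{a})$. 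I would also remark that this is the weighted-manifold analogue of the classical Bakry-\'{E}mery theorem (see \cite{FOT,MR} for the Dirichlet-form framework and the usual references for the $\Gamma_{2}$-calculus), so the proof is essentially a transcription, with the only genuinely local input being that $(M^{a},g_{a},d\xi)$ is complete with a lower Ricci bound, which is guaranteed by the $C^{1}$ (resp.\ $C^{2}$, cf.\ Proposition \ref{prop:BEcond}) regularity of $a$ and condition (E).
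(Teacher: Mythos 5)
The paper does not actually prove this theorem; the ``proof'' is a citation to Ambrosio--Gigli--Savar\'e and Bakry--Gentil--Ledoux. What you wrote is the standard semigroup-interpolation argument from exactly those references, so your route is the one the paper is implicitly invoking, just spelled out. That said, three small points deserve patching before this could stand as a full proof.

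First, the ``bookkeeping'' parenthesis on the constant is not actually a reconciliation. In the convention of Definition~\ref{def:BEdefi}, (BE) reads $2\Gamma_{2}^{a}(f)\ge\tfrac{K}{2}\Gamma^{a}(f)$, i.e.\ $\Gamma_{2}^{a}\ge\tfrac{K}{4}\Gamma^{a}$, and the interpolation argument then produces the $L^{1}$-gradient estimate with exponent $e^{-Kt/4}$, not $e^{-2Kt}$. The theorem as stated is nevertheless fine because it is an ``there exists $K\in\R$'' equivalence, so the two $K$'s need not be the same; but your Gr\"onwall constant will not literally match the one in \eqref{eq:gradient-estimate-A}, and the factor-$\tfrac14$ remark does not fix this.

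Second, in the converse direction, dividing by $t$ and sending $t\downarrow0$ in \eqref{eq:gradient-estimate-A} does not ``reproduce exactly'' (BE); it yields, on $\{a\nabla f\ne0\}$, the scalar ($L^{1}$-type) Bochner inequality
\[
L^{a}|a\nabla f|-\frac{\langle a\nabla f,a\nabla L^{a}f\rangle}{|a\nabla f|}\ge 2K|a\nabla f|.
\]
To get (BE) one must multiply by $2|a\nabla f|$ and use the diffusion identity
\[
L^{a}|a\nabla f|^{2}=2|a\nabla f|\,L^{a}|a\nabla f|+2\,|a\nabla|a\nabla f||^{2},
\]
so that the extra nonnegative term $2|a\nabla|a\nabla f||^{2}$ (the same Hessian-defect term you invoked in the forward direction via $|\nabla|\nabla g||\le|\operatorname{Hess}g|$) is absorbed. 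You should make this step explicit rather than asserting that the expansion gives (BE) verbatim. Third, the claim ``$P_{t}^{a}|a\nabla f|-|a\nabla P_{t}^{a}f|\ge(1-e^{-2Kt})P_{t}^{a}|a\nabla f|\ge0$'' fails for $K<0$; just delete the trailing ``$\ge0$'', which is never used. Apart from these, your identification of the real content of the forward direction (self-improvement of $\Gamma_{2}$, $\eps$-regularized square-root interpolation, reduction to smooth functions via quasi-isometry to $\T^{d}$, conservativeness $P_{t}^{a}1=1$) is accurate and matches what the cited references do.
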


\begin{proof}
See e.g. \cite{Ambrosio:2015gp,Bakry:2014ir}.
\end{proof}
See \cite{Ambrosio:2015gp,Bakry:2014ir,Wang,Wang:2011fr,Gigli:2013hg,WangYan13,vonRenesse:2005cg}
for the terminology and further results on equivalent \emph{curvature-dimension
conditions} as well as \emph{Ricci curvature bounds} in weighted Riemannian
manifolds.
\begin{lem}
\label{lem:resolvent-contraction-lemma} Let condition (N) and (E)
hold. Let $(P_{t}^{a})_{t\ge0}$ be the heat semigroup associated
to $\Acal$. Assume that $BE(K,\infty)$ holds for some $K\le0$.
Let $J_{\delta}^{0}:=(1-\delta(L^{a}+2K))^{-1}$, $\delta>0$ be the
resolvent associated to $\Acal_{-2K}=\Acal-2K(\cdot,\cdot)_{H}$.
Then
\[
\tilde{\Psi}(J_{\delta}^{0}u)\le\tilde{\Psi}(u)
\]
for any $u\in S$ and any $\delta>0$, where $\tilde{\Psi}$ is as
in \eqref{eq:Psi-tilde}.
\end{lem}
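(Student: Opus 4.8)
The plan is to exploit the gradient estimate \eqref{eq:gradient-estimate-A} from Theorem \ref{thm:ricci-thm}, combined with the fact that $\psi = \theta(|\cdot|)$ with $\theta$ convex and nondecreasing (condition (N)), and to pass from the heat semigroup $(P_t^a)$ to the resolvent $J_\delta^0$ via the Laplace transform representation. The key observation is that $\Acal_{-2K} = \Acal - 2K(\cdot,\cdot)_H$ (recall $K\le 0$, so $-2K\ge 0$) generates the semigroup $(e^{2Kt}P_t^a)_{t\ge 0}$, and hence
\[
J_\delta^0 u = (1-\delta(L^a+2K))^{-1}u = \frac{1}{\delta}\int_0^\infty e^{-t/\delta}\, e^{2Kt} P_t^a u \, dt .
\]
So $J_\delta^0 u$ is an average of $e^{2Kt}P_t^a u$ against the probability measure $\mu_\delta(dt) := \frac{1}{\delta}e^{-t/\delta}\,dt$ on $[0,\infty)$.

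\textbf{Step 1: Reduce to a semigroup estimate.} First I would show $\tilde\Psi(e^{2Kt}P_t^a u)\le \tilde\Psi(u)$ for every $t\ge 0$ and $u\in S$. Since $\psi = \theta(|\cdot|)$, we have $\tilde\Psi(v) = \int_{\T^d}\theta(|a\nabla v|)\,d\xi$ for $v\in S$. Applying \eqref{eq:gradient-estimate-A} pointwise gives $|a\nabla(e^{2Kt}P_t^a u)| = e^{2Kt}|a\nabla P_t^a u| \le e^{2Kt}\cdot e^{-2Kt}P_t^a|a\nabla u| = P_t^a|a\nabla u|$ (here one must note $e^{2Kt}\le 1$ since $K\le 0$, so the prefactor is harmless, but the essential cancellation is the $e^{\pm 2Kt}$). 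Because $\theta$ is nondecreasing on $[0,\infty)$, $\theta(|a\nabla(e^{2Kt}P_t^a u)|)\le \theta(P_t^a|a\nabla u|)$. Now $P_t^a$ is a Markovian (sub-Markovian) operator on the weighted manifold, so by Jensen's inequality for the kernel of $P_t^a$ together with convexity of $\theta$, $\theta(P_t^a|a\nabla u|)\le P_t^a(\theta(|a\nabla u|))$ pointwise. Integrating over $\T^d$ and using that $P_t^a$ is a contraction on $L^1$ (it is conservative, preserving the measure $d\xi$ since $L^a$ is in divergence form with no zeroth-order term) yields $\int_{\T^d}\theta(|a\nabla(e^{2Kt}P_t^a u)|)\,d\xi \le \int_{\T^d}\theta(|a\nabla u|)\,d\xi$, i.e. $\tilde\Psi(e^{2Kt}P_t^a u)\le \tilde\Psi(u)$.

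\textbf{Step 2: Average over $t$.} The map $v\mapsto \tilde\Psi(v)$ is convex on $S$ (since $\psi$ is convex and $v\mapsto a\nabla v$ is linear) and, by (N), has at most quadratic growth so it is finite on all of $S$; it is also lower semicontinuous. Writing $J_\delta^0 u = \int_0^\infty (e^{2Kt}P_t^a u)\,\mu_\delta(dt)$ as a Bochner integral in $S$ (the integrand is continuous in $t$ and the exponential weight guarantees integrability), Jensen's inequality for the probability measure $\mu_\delta$ gives
\[
\tilde\Psi(J_\delta^0 u) \le \int_0^\infty \tilde\Psi(e^{2Kt}P_t^a u)\,\mu_\delta(dt) \le \int_0^\infty \tilde\Psi(u)\,\mu_\delta(dt) = \tilde\Psi(u),
\]
which is the claim. (Alternatively, one avoids Bochner-integral technicalities by first proving the inequality for a Riemann-sum approximation of the integral using convexity of $\tilde\Psi$ and then passing to the limit using lower semicontinuity of $\tilde\Psi$ and strong convergence of the sums to $J_\delta^0 u$ in $S$.)

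\textbf{Main obstacle.} The delicate point is Step 1: justifying the pointwise Jensen inequality $\theta(P_t^a f)\le P_t^a(\theta(f))$ and the accompanying $L^1$-contraction/conservativeness of $P_t^a$ on the weighted manifold $(M^a,g_a,d\xi)$, together with making rigorous that \eqref{eq:gradient-estimate-A}, a priori stated for $f\in C^1(M^a)$, applies to $u\in S$ — this requires a density/approximation argument (approximate $u\in S$ by smooth functions, use that $P_t^a$ maps into smoother spaces, and pass to the limit in $\tilde\Psi$ using lower semicontinuity). One must also confirm that $J_\delta^0$ is well-defined as a bounded operator on $H$ and on $S$, which follows since $-2K\ge 0$ makes $\Acal_{-2K}$ a closed form with the same domain $S$ as $\Acal$, dominating $\Acal$; its resolvent is then a contraction on $H$ and bounded on $S$. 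Once these functional-analytic points are in place, the convexity argument is routine.
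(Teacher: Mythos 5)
Your proof is correct and follows essentially the same route as the paper: both use the Laplace-transform representation of $J_\delta^0$, the gradient estimate of Theorem \ref{thm:ricci-thm}, and a Jensen-type convexity argument combined with conservativeness of the $a$-heat flow. The only difference is cosmetic — the paper first passes $a\nabla$ through the integral to obtain the pointwise bound $|a\nabla J_\delta^0 u|\le J_\delta^a|a\nabla u|$ and then applies Maligranda's Orlicz interpolation theorem once to the Markovian symmetric resolvent $J_\delta^a$, whereas you apply the Jensen/conservativeness step at the semigroup level and then a second Jensen for the Laplace averaging measure $\mu_\delta$; both unpackings are valid.
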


\begin{proof}
Compare with \cite[Proof of Example 7.11]{GessToelle14}. Let $(P_{t}^{0})_{t\ge0}$
be the $C_{0}$-semigroup associated to $J_{\delta}^{0}$, $\delta>0$.
Then by the Trotter product formula \cite[Ch. VIII.8]{ReSi1}, $P_{t}^{0}=e^{2Kt}P_{t}^{a}$,
$t\ge0$. We get for $u\in C^{1}(\T^{d})$ and $\delta>0$ that
\[
\begin{aligned}\left|a\nabla J_{\delta}^{0}u\right|= & \left|a\nabla\int_{0}^{\infty}e^{-t}P_{\delta t}^{0}u\,dt\right|=\left|a\nabla\int_{0}^{\infty}e^{-t+2K\delta t}P_{\delta t}^{a}u\,dt\right|\\
\le & \int_{0}^{\infty}e^{-t+2K\delta t}\left|a\nabla P_{\delta t}^{a}u\right|\,dt\le\int_{0}^{\infty}e^{-t}P_{\delta t}^{a}\left|a\nabla u\right|\,dt=J_{\delta}^{a}|a\nabla u|,
\end{aligned}
\]
where we have used Theorem \ref{thm:ricci-thm}. Since $J_{\delta}^{a}$,
$\delta>0$ is Markovian symmetric on $L^{2}(\T^{d})$, we get by
an application of \cite[Theorem 3]{Mali} and condition (N) that for
$u\in L^{2}(\T^{d})$,
\[
\int_{\T^{d}}\theta(J_{\delta}^{a}u)\,d\xi\le\int_{\T^{d}}\theta(u)\,d\xi.
\]
Altogether, 
\[
\begin{aligned}\tilde{\Psi}(J_{\delta}^{0}u)= & \int_{\T^{d}}\theta(|a\nabla J_{\delta}^{0}u|)\,d\xi\\
\le & \int_{\T^{d}}\theta(J_{\delta}^{a}|a\nabla u|)\,d\xi\le\int_{\T^{d}}\theta(|a\nabla u|)\,d\xi=\tilde{\Psi}(u),
\end{aligned}
\]
density of $C^{1}(\T^{d})\subset S$ and Lebesgue's dominated convergence
theorem completes the proof.
\end{proof}
\begin{rem}
\label{rem:Note-that} Note that
\begin{enumerate}
\item the above statement remains true if $BE(K,\infty)$ holds for $K>0$.
If one investigates the proof carefully, one sees that then it even
holds that
\[
\tilde{\Psi}(J_{\delta}^{a}u)\le\tilde{\Psi}(u)
\]
for any $u\in S$ and any $\delta>0$;
\item the above statement remains true if $\tilde{\Psi}$ is replaced by
$u\mapsto\int_{\mathbb{T}^{d}}\psi_{\lambda}(a\nabla u)\,d\xi$, $u\in S$
as $\psi_{\lambda}$ satisfies condition (N) whenever $\psi$ does
so; for instance, the doubling condition for $\psi_{\lambda}$ holds
with $K$ replaced by $K\vee4$.
\end{enumerate}
\end{rem}

\subsection{Weak defective commutation condition}

In all of this subsection assume conditions (E), (D), and the following.
\begin{assumption}
Assume that $a\in C^{2}(\T^{d};\R^{d\times d})$, $b\in C^{2}(\T^{d};\R^{N\times d})$
and that for all $1\le l,j\le d$, $1\le i\le N$,
\[
\tag{R}\sum_{k=1}^{d}\sum_{p=1}^{d}\left[b_{ik}(a_{pl}\partial_{k}a_{pj}+a_{pj}\partial_{k}a_{pl})-a_{pk}(a_{pj}\partial_{k}b_{il}+a_{pl}\partial_{k}b_{ij})\right]=0\quad\text{on }\T^{d},
\]
where $a=(a_{lj})$ and $b=(b_{ij})$.
\end{assumption}

Note that condition (R) is symmetric in the sense that interchanging
the indices $l$ and $j$ yields the same condition.
\begin{prop}
\label{prop:WDC} Conditions (E), (R), and (D) imply the following
\emph{weak defective commutation condition}.

There exists a constant $c\in\mathbb{R}$ such that for every $\beta>0$,
we have that
\begin{equation}
\beta\int_{\mathbb{T}^{d}}\left\langle \beta G_{\beta}^{a}b\nabla f-\beta b\nabla G_{\beta}^{a}f,b\nabla f\right\rangle \,d\xi\ge c\Acal(f,f),\quad\forall f\in S.\label{eq:WDC}
\end{equation}
\end{prop}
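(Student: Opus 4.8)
The plan is to bring the estimate \eqref{eq:WDC}, which is required uniformly in $\beta$, down to the single ``commutator bound'' that appears formally as $\beta\to\infty$, and then to obtain that bound by computing the commutator $L^{a}(b_{i}\cdot\nabla\,\cdot)-b_{i}\cdot\nabla(L^{a}\,\cdot)$ and noticing that condition (R) is exactly the vanishing of its genuinely second-order part.

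Since $G_{\beta}^{a}\colon H\to D(L^{a})=H^{2}$ is bounded and $b\in C^{1}$, both sides of \eqref{eq:WDC} are continuous in $f$ for the $H^{1}$-norm, so it suffices to argue for $f\in C^{\infty}(\T^{d})$. Fix such $f$ and $\beta>0$, set $g:=G_{\beta}^{a}f$, $v_{i}:=G_{\beta}^{a}(b_{i}\cdot\nabla f)$, $w_{i}:=v_{i}-b_{i}\cdot\nabla g$, and note $(\beta-L^{a})g=f$ and $(\beta-L^{a})v_{i}=b_{i}\cdot\nabla f$, so that $\beta w_{i}=L^{a}v_{i}-b_{i}\cdot\nabla(L^{a}g)$. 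Substituting $b_{i}\cdot\nabla f=(\beta-L^{a})v_{i}$, expanding $v_{i}=w_{i}+b_{i}\cdot\nabla g$, and integrating by parts in divergence form (condition (D) and elliptic regularity underpin the relevant integrations by parts, in particular the membership $b_{i}\cdot\nabla g\in D(L^{a})$), one obtains the identity
\[
\beta\int_{\T^{d}}\bigl\langle\beta G_{\beta}^{a}b\nabla f-\beta b\nabla G_{\beta}^{a}f,\,b\nabla f\bigr\rangle\,d\xi=\beta^{3}\sum_{i=1}^{N}\|w_{i}\|_{H}^{2}+\beta^{2}\sum_{i=1}^{N}\Acal(w_{i},w_{i})+Q(\beta G_{\beta}^{a}f),
\]
where $Q(u):=\sum_{i=1}^{N}\int_{\T^{d}}\bigl(L^{a}(b_{i}\cdot\nabla u)-b_{i}\cdot\nabla(L^{a}u)\bigr)(b_{i}\cdot\nabla u)\,d\xi$. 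The first two terms being non-negative, it remains to find $c\le0$ with $Q(u)\ge c\,\Acal(u,u)$ for all $u$; indeed, then $Q(\beta G_{\beta}^{a}f)\ge c\,\Acal(\beta G_{\beta}^{a}f,\beta G_{\beta}^{a}f)\ge c\,\Acal(f,f)$, because $\Acal(\beta G_{\beta}^{a}f,\beta G_{\beta}^{a}f)\le\Acal(f,f)$ by the spectral theorem for $-L^{a}\ge0$ (since $x\mapsto\beta^{2}x/(\beta+x)^{2}\le x$ on $[0,\infty)$) and $c\le0$.

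For the remaining bound — the computational core — expand $K_{i}:=L^{a}(b_{i}\cdot\nabla\,\cdot)-b_{i}\cdot\nabla(L^{a}\,\cdot)$ on smooth functions, with $A:=a^{\ast}a$. The third-order terms cancel, so $K_{i}$ is second order, and since $\partial_{l}\partial_{k}$ is symmetric only the symmetric part of its principal coefficient contributes; that part equals $M_{lk}^{i}=\sum_{j}\bigl(A_{lj}\partial_{j}b_{ik}+A_{kj}\partial_{j}b_{il}\bigr)-\sum_{j}b_{ij}\,\partial_{j}A_{lk}$, and using $\partial_{j}A_{lk}=\sum_{p}(a_{pl}\partial_{j}a_{pk}+a_{pk}\partial_{j}a_{pl})$ together with $\sum_{p}a_{pj}a_{pk}=A_{jk}$ one checks that $M_{lk}^{i}=0$ is precisely condition (R). Hence $K_{i}$ is a first-order operator $\sum_{m}c_{m}^{i}\partial_{m}$ with $c_{m}^{i}\in C(\T^{d})$ (this is where $a,b\in C^{2}$ is used, so that $c_{m}^{i}$, built from second derivatives of $A$ and $b$, is continuous and hence bounded on the compact torus), and the identity $K_{i}u=\sum_{m}c_{m}^{i}\partial_{m}u$ extends from $C^{\infty}$ to $H^{2}$ by density. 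Therefore, for every $u\in S$,
\[
|Q(u)|\le\Bigl(\sum_{i}\|c^{i}\|_{\infty}\|b_{i}\|_{\infty}\Bigr)\|\nabla u\|_{H}^{2}\le\kappa^{-1}\Bigl(\sum_{i}\|c^{i}\|_{\infty}\|b_{i}\|_{\infty}\Bigr)\Acal(u,u),
\]
the last step by the ellipticity assumption (E), $|a\nabla u|^{2}\ge\kappa|\nabla u|^{2}$; so one takes $c:=-\kappa^{-1}\sum_{i}\|c^{i}\|_{\infty}\|b_{i}\|_{\infty}\le0$.

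The main obstacle is the second-order commutator calculation and the bookkeeping identifying $M_{lk}^{i}$ with the left-hand side of (R); the rest — the density/continuity reductions, the divergence-form integrations by parts behind the algebraic identity together with the elliptic-regularity facts needed to legitimise them, and the $\Acal$-contractivity of $\beta G_{\beta}^{a}$ — is routine.
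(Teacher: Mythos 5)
Your proof is correct, and it takes a route that is genuinely different from the paper's in the step from the pointwise operator computation to the resolvent-level estimate. The computational core is the same in both: expanding the commutator $L^{a}(b_{i}\cdot\nabla\,\cdot)-b_{i}\cdot\nabla(L^{a}\,\cdot)$, observing that third-order terms cancel, and checking that condition (R) is exactly the vanishing of the symmetrised principal second-order coefficient, leaving a first-order operator $R$ with coefficients continuous on $\T^{d}$ and hence bounded from $S$ to $L^{2}$. Where you diverge is in the passage to the resolvent $G_{\beta}^{a}$. The paper invokes Shigekawa's defective intertwining theorem \cite[Theorem 2.1]{Shigekawa:2006dp} to upgrade the pointwise identity $b\nabla L^{a}u=L^{a}(b\nabla u)+Ru$ to the resolvent identity $b\nabla G_{\beta}^{a}u=G_{\beta}^{a}b\nabla u+G_{\beta}^{a}RG_{\beta}^{a}u$, and then integrates, applies Cauchy--Schwarz to the resulting bilinear term, and uses $\Acal$-contractivity of $\beta G_{\beta}^{a}$ in $S$ and contractivity of $\beta G_{\beta}^{a}$ in $H$. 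You instead derive the exact algebraic decomposition
\[
\beta\int_{\T^{d}}\left\langle\beta G_{\beta}^{a}b\nabla f-\beta b\nabla G_{\beta}^{a}f,\,b\nabla f\right\rangle d\xi=\beta^{3}\sum_{i}\|w_{i}\|_{H}^{2}+\beta^{2}\sum_{i}\Acal(w_{i},w_{i})+Q(\beta G_{\beta}^{a}f),
\]
which isolates two manifestly non-negative terms and the curvature-type quadratic form $Q$, and you then bound $Q$ directly and use only the spectral-theorem $\Acal$-contraction. I checked the identity and the identification of $M_{lk}^{i}$ with (R); both are right (and your $w_{i}$ is, up to sign, exactly the paper's $G_{\beta}^{a}(RG_{\beta}^{a}f)_{i}$, so the two decompositions are consistent --- your two non-negative terms account for the difference between $\int\langle R\beta G_{\beta}^{a}f,\beta b\nabla G_{\beta}^{a}f\rangle$ and $\int\langle R\beta G_{\beta}^{a}f,\beta G_{\beta}^{a}b\nabla f\rangle$). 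The gains of your route: it is self-contained (no external intertwining theorem), it exposes the non-negative remainder explicitly, and it gives a slightly sharper constant by dropping those terms rather than Cauchy--Schwarzing through them. Two small inaccuracies worth noting: condition (D) is not actually used in your integrations by parts (nor, as far as the written proof goes, in the paper's), so citing it there is harmless but misleading; and $\|\nabla u\|_{H}$ should read $\|\nabla u\|_{L^{2}(\T^{d};\R^{d})}$, and the displayed bound $|Q(u)|\le\kappa^{-1}(\sum_{i}\|c^{i}\|_{\infty}\|b_{i}\|_{\infty})\Acal(u,u)$ is really established for $u\in H^{2}$ (where $Q$ in its original form is defined), with the $u\in S$ version referring to the re-expressed $Q(u)=\sum_{i}\int(c^{i}\cdot\nabla u)(b_{i}\cdot\nabla u)$ --- which is all that is needed, since you apply it only to $u=\beta G_{\beta}^{a}f\in D(L^{a})$.
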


As seen later in the proof of Theorem \ref{thm:main-bound} below
(where inequality \eqref{eq:WDC} is applied), we could allow for
replacing the term $c\Acal(f,f)$ on the l.h.s. of \eqref{eq:WDC}
by $c\Acal_{1}(f,f)$.

In fact, by examining the proof below, we get that $c\le0$.
\begin{proof}[Proof of Proposition \ref{prop:WDC}.]

Let $f\in C^{3}(\T^{d})$. By assumption, $a\in C^{2}(\T^{d};\R^{d\times d})$
and $b\in C^{2}(\T^{d};\R^{N\times d})$. Set $\A:=a^{\ast}a$. Utilizing
the Einstein summation convention, we get that for every $1\le i\le N$,
\begin{equation}
\begin{aligned} & (b\nabla(\div(\A\nabla f))_{i}\\
= & b_{ij}\partial_{j}(\partial_{k}(\A_{kl}\partial_{l}f))\\
= & b_{ij}\left[\partial_{j}\partial_{k}\A_{kl}\partial_{l}f+\partial_{k}\A_{kl}\partial_{j}\partial_{l}f+\partial_{j}\A_{kl}\partial_{k}\partial_{l}f+\A_{kl}\partial_{j}\partial_{k}\partial_{l}f\right]\\
= & b_{ij}\partial_{j}\partial_{k}\A_{kl}\partial_{l}f+\partial_{k}\A_{kl}\partial_{l}(b_{ij}\partial_{j}f)-\partial_{k}\A_{kl}\partial_{l}b_{ij}\partial_{j}f\\
 & +\A_{kl}\partial_{l}\partial_{k}(b_{ij}\partial_{j}f)-\A_{kl}\partial_{l}\partial_{k}b_{ij}\partial_{j}f\\
 & -\A_{kl}\partial_{k}b_{ij}\partial_{l}\partial_{j}f-\A_{kl}\partial_{l}b_{ij}\partial_{k}\partial_{j}f+b_{ij}\partial_{j}\A_{kl}\partial_{k}\partial_{l}f\\
= & \div(\A\nabla(b\nabla f)_{i})+\left[b_{il}\partial_{l}\partial_{k}\A_{kj}-\partial_{k}\A_{kl}\partial_{l}b_{ij}-\A_{kl}\partial_{l}\partial_{k}b_{ij}\right]\partial_{j}f\\
 & +\left[b_{ik}\partial_{k}(a_{pj}a_{pl})-2a_{pk}a_{pj}\partial_{k}b_{il}\right]\partial_{l}\partial_{j}f\\
= & \div(\A\nabla(b\nabla f)_{i})+(R_{0}\nabla f)_{i}
\end{aligned}
\label{eq:LaLaR}
\end{equation}
by relabeling the indices, symmetry of the Hessian and condition (R),
where
\[
(R_{0}z)_{i}:=[b_{il}\partial_{l}\partial_{k}\A_{kj}-\partial_{k}\A_{kl}\partial_{l}b_{ij}-\A_{kl}\partial_{l}\partial_{k}b_{ij}]z_{j},\quad z\in\R^{d},\;1\le i\le N.
\]
Clearly, $f\mapsto R_{0}\nabla f$ can uniquely be extended to a linear
operator $R:H^{1}(\T^{d})\to L^{2}(\T^{d};\R^{N})$ such that there
exists $C=C(a,\nabla a,D^{2}a,b,\nabla b,D^{2}b)>0$ with 
\begin{equation}
\|Ru\|_{L^{2}(\T^{d};\R^{N})}^{2}\le C\Acal(u,u)\quad\forall u\in S.\label{eq:R-bounded-op}
\end{equation}
Note that $R$ is bounded as an operator from $S$ to $L^{2}(\T^{d};\R^{N})$.
By \cite[Theorem 2.1]{Shigekawa:2006dp}, then \eqref{eq:LaLaR} implies
that for every $\beta>0$,
\begin{equation}
b\nabla G_{\beta}^{a}u=G_{\beta}^{a}b\nabla u+G_{\beta}^{a}RG_{\beta}^{a}u\quad\forall u\in S.\label{eq:defective}
\end{equation}
See also \cite[Theorem 3.1, Proposition 3.2]{Shigekawa:2000io}. Let
$u\in S$ and $\beta>0$. Clearly, \eqref{eq:defective} implies that
\begin{equation}
\langle b\nabla G_{\beta}^{a}u,b\nabla u\rangle=\langle G_{\beta}^{a}b\nabla u,b\nabla u\rangle+\langle G_{\beta}^{a}RG_{\beta}^{a}u,b\nabla u\rangle.\label{eq:indeed_cond}
\end{equation}
Integrating over $\T^{d}$ and multiplying with $\beta^{2}$ yields
\[
\begin{aligned} & \beta\int_{\mathbb{T}^{d}}\left\langle \beta G_{\beta}^{a}b\nabla u-\beta b\nabla G_{\beta}^{a}u,b\nabla u\right\rangle \,d\xi\\
= & -\int_{\T^{d}}\langle R\beta G_{\beta}^{a}u,\beta G_{\beta}^{a}b\nabla u\rangle\,d\xi\\
\ge & -\sqrt{C}\Acal(\beta G_{\beta}^{a}u,\beta G_{\beta}^{a}u)^{1/2}\|\beta G_{\beta}^{a}b\nabla u\|_{L^{2}(\T^{d};\R^{d})}\\
\ge & -\sqrt{C}\Acal(u,u)^{1/2}\|b\nabla u\|_{L^{2}(\T^{d};\R^{d})}\\
= & -\sqrt{C}\Acal(u,u)^{1/2}\Bcal(u,u)^{1/2}\\
\ge & -\sqrt{C}\|b\|_{\infty}\kappa^{-1/2}\Acal(u,u)
\end{aligned}
\]
which yields \eqref{eq:WDC}.
\end{proof}
The (weak) defective commutation property is a variant of the so-called
defective intertwining property, see \cite{Shigekawa:2006dp}.

Now, our conditions imply the following technical result which is
needed later.
\begin{lem}
\label{lem:comm-implies-M} Suppose that conditions (E), (R), and
(D) hold. Then the resolvent $J_{\delta}^{a}$, $\delta>0$ leaves
the domain $D(L^{b})$ of $L^{b}$ invariant and we have that $L^{b}J_{\delta}^{a}z\rightharpoonup L^{b}z$
weakly in $H$ as $\delta\to0$ for any $z\in D(L^{b})$.
\end{lem}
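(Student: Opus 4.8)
The statement has two parts of rather different nature. The invariance is soft: by Assumption~\ref{assu:a-elliptic} one has $D(L^a)=H^2(\T^d)$, so $J_\delta^a$ maps $H$ into $H^2(\T^d)$, and since $b\in C^2(\T^d;\R^{N\times d})$ the field $b^\ast b\nabla u$ lies in $H^1(\T^d;\R^d)$ for every $u\in H^2(\T^d)$, so that $\Bcal(u,v)=-(\div(b^\ast b\nabla u),v)_H$ for all $v\in S$; hence $H^2(\T^d)\subseteq D(L^b)$ and $J_\delta^a(D(L^b))\subseteq J_\delta^a(H)\subseteq H^2(\T^d)\subseteq D(L^b)$.

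The weak convergence carries all the work, and the plan is to reduce it to a single algebraic identity with a small remainder. Rescaling \eqref{eq:defective} with $\beta=1/\delta$ (using $G_{1/\delta}^a=\delta J_\delta^a$) yields the resolvent commutation identity
\[
b\nabla J_\delta^a u = J_\delta^a(b\nabla u) + \delta\,J_\delta^a R J_\delta^a u,\qquad u\in S,
\]
with $R$ and the bound \eqref{eq:R-bounded-op} as in the proof of Proposition~\ref{prop:WDC}. Fixing $z\in D(L^b)$ and testing against $v\in S$, I would start from $(L^b J_\delta^a z,v)_H=-\Bcal(J_\delta^a z,v)$, expand the factor $b\nabla J_\delta^a z$ by the identity, then use the $H$-self-adjointness of $J_\delta^a$ to move it onto $b\nabla v$ and expand $J_\delta^a(b\nabla v)$ by the identity again. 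The $R$-free part equals $-\Bcal(z,J_\delta^a v)=(J_\delta^a L^b z,v)_H$, leaving
\[
(L^b J_\delta^a z,v)_H=(J_\delta^a L^b z,v)_H+T_1(v)-T_2(v),
\]
with $T_1(v)=\delta\sum_i\int_{\T^d}(b\nabla z)_i(J_\delta^a R J_\delta^a v)_i\,d\xi$ and $T_2(v)=\delta\sum_i\int_{\T^d}(J_\delta^a R J_\delta^a z)_i(b\nabla v)_i\,d\xi$.

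The key point is then that $|T_1(v)|+|T_2(v)|\le C\sqrt\delta\,\|v\|_H$ with $C$ independent of $\delta\in(0,1]$ and $v$. The ingredients I would use are: \eqref{eq:R-bounded-op}; the $H$- and form-contractivity of $J_\delta^a$ (in particular $\Acal(J_\delta^a w,J_\delta^a w)\le\Acal(w,w)$); the a priori bound $\Acal(J_\delta^a w,J_\delta^a w)\le(2\delta)^{-1}\|w\|_H^2$ and the smoothing estimate $\|\nabla J_\delta^a h\|_H\le(2\kappa\delta)^{-1/2}\|h\|_H$, both obtained by pairing $(1-\delta L^a)J_\delta^a h=h$ with $J_\delta^a h$ and invoking Assumption~\ref{assu:a-elliptic}; and, for $T_2$, one integration by parts justified by $\div b_i=0$ (Assumption~\ref{assu:b-div-free}) that moves the derivative off $v$ and onto the $H^2$-regular field $J_\delta^a R J_\delta^a z$. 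Granting this, $\sup_{0<\delta\le1}\|L^b J_\delta^a z\|_H<\infty$ follows by taking the supremum of $|(L^b J_\delta^a z,v)_H|$ over $v\in S$ with $\|v\|_H\le1$, while for fixed $v\in S$ the right-hand side of the identity tends to $(L^b z,v)_H$ as $\delta\to0$ (since $J_\delta^a L^b z\to L^b z$ in $H$ and $T_1(v),T_2(v)\to0$); by density of $S$ in $H$ these two facts give exactly $L^b J_\delta^a z\rightharpoonup L^b z$ weakly in $H$.

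The main obstacle is precisely the control of $T_1$ and $T_2$: both involve the operator $R$, which is only bounded from $S$ into $L^2$, conjugated with resolvents, and $T_2$ moreover contains a factor $b\nabla v$ not dominated by $\|v\|_H$. Both difficulties are defeated by the explicit factor $\delta$ in front of each remainder, which outweighs the $\delta^{-1/2}$ loss in the smoothing estimate, and --- for $T_2$ --- by the single integration by parts that condition (D) permits, trading $b\nabla v$ for $v$ at the cost of one derivative on the smooth function $J_\delta^a R J_\delta^a z$. A minor point to keep in mind is that \eqref{eq:defective} is only available for arguments in $S$, which is harmless here because $D(L^b)\subseteq S$.
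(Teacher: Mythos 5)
Your proof is correct and, up to a rearrangement, follows the same route as the paper: both rewrite $L^{b}J_{\delta}^{a}z$ using the rescaled defective commutation identity $b\nabla J_{\delta}^{a}=J_{\delta}^{a}b\nabla+\delta J_{\delta}^{a}RJ_{\delta}^{a}$ (a consequence of \eqref{eq:defective}) together with the $H$-self-adjointness of $J_{\delta}^{a}$, and both then kill the $O(\delta)$ remainders using the $R$-bound \eqref{eq:R-bounded-op} and resolvent smoothing. The one genuine tactical difference lies in the treatment of the remainder $\delta\int_{\T^{d}}\langle J_{\delta}^{a}RJ_{\delta}^{a}z,b\nabla v\rangle\,d\xi$, which carries a factor $b\nabla v$ not dominated by $\|v\|_{H}$: you integrate by parts via condition (D) and the smoothing estimate $\|\nabla J_{\delta}^{a}h\|_{H}\le(2\kappa\delta)^{-1/2}\|h\|_{H}$, whereas the paper moves $J_{\delta}^{a}$ onto $b\nabla v$ and invokes \eqref{eq:defective} a second time, obtaining $b\nabla J_{\delta}^{a}v-\delta J_{\delta}^{a}RJ_{\delta}^{a}v$ there; your variant is at least as clean and yields directly the transparent uniform bound $\|L^{b}J_{\delta}^{a}z\|_{H}\le\|J_{\delta}^{a}L^{b}z\|_{H}+C(z)\sqrt{\delta}$ on which the density argument rests.
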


\begin{proof}
For $u\in D(L^{b})$, $v\in S$, $\delta>0$, consider
\[
\begin{aligned} & \left|(L^{b}J_{\delta}^{a}u-L^{b}u,v)_{H}\right|\\
\le & \left|\int_{\T^{d}}\langle b\nabla(u-J_{\delta}^{a}u),b\nabla v\rangle\,d\xi\right|\\
\le & \left|\int_{\T^{d}}\langle b\nabla u-J_{\delta}^{a}b\nabla u,b\nabla v\rangle\,d\xi\right|+\delta\left|\int_{\T^{d}}\langle J_{\delta}^{a}RJ_{\delta}^{a}u,b\nabla v\rangle\,d\xi\right|\\
\le & \left|\int_{\T^{d}}\langle b\nabla u-J_{\delta}^{a}b\nabla u,b\nabla v\rangle\,d\xi\right|+\delta\left|\int_{\T^{d}}\langle RJ_{\delta}^{a}u,b\nabla J_{\delta}^{a}v-\delta J_{\delta}^{a}RJ_{\delta}^{a}v\rangle\,d\xi\right|,
\end{aligned}
\]
where we have used \eqref{eq:defective} from the proof of Proposition
\ref{prop:WDC} twice. $R$ is defined as in \eqref{eq:R-bounded-op}.
The first term converges to zero as $\delta\to0$. The second term
is bounded for $\delta\in(0,1]$ as follows
\[
\begin{aligned} & \delta C(R)\Acal(J_{\delta}^{a}u,J_{\delta}^{a}u)^{1/2}[\kappa^{-1/2}\|b\|_{\infty}+\delta]\Acal(J_{\delta}^{a}v,J_{\delta}^{a}v)^{1/2}\\
\le & C(R,a,b)\|L^{b}u\|_{H}\Acal(\delta J_{\delta}^{a}v,\delta J_{\delta}^{a}v)^{1/2}\\
\le & \sqrt{\delta}C(R,a,b)\|L^{b}u\|_{H}\|J_{\delta}^{a}v-v\|_{H}^{1/2}\|v\|_{H}^{1/2},
\end{aligned}
\]
and hence converges to zero as $\delta\to0$. Now, as we have the
bound
\[
\|L^{b}J_{\delta}^{a}u\|_{H}\le C(R,a,b)\|L^{b}u\|_{H}(1+\|u\|_{H}),
\]
by an $\eps/2$-argument, we get the convergence to zero as $\delta\to0$
for every $v\in H$. As a consequence, $L^{b}J_{\delta}^{a}u\rightharpoonup L^{b}u$
weakly in $H$ for $u\in D(L^{b})$.
\end{proof}
\begin{rem}
If one lets $\beta\to\infty$ in \eqref{eq:WDC} for $f\in C^{3}(\mathbb{T}^{d})$,
one obtains
\begin{equation}
\int_{\mathbb{T}^{d}}\left[\langle L^{a}b\nabla f,b\nabla f\rangle-\langle b\nabla f,b\nabla L^{a}f\rangle\right]\,d\xi\ge c\Acal(f,f),\quad\forall f\in C^{3}(\mathbb{T}^{d}),\label{eq:integratedBE}
\end{equation}
which resembles an integrated version of the \emph{Bakry-\'{E}mery
curvature-dimension condition}, see (BE) above, see \cite{Bakry:1997tf,Ledoux:2000vx,Gigli:2013hg}
for similar kinds of conditions. Assume that (D), (E), and (R) hold
and that $\Acal$ satisfies the following Poincar\'{e} inequality,
i.e, there exists $C>0$ such that 
\begin{equation}
\|f\|_{L^{2}(\T^{d})}^{2}\le C\Acal(f,f)+\left(\int_{\T^{d}}f\,d\xi\right)^{2}\quad\forall f\in S,\label{eq:poincare}
\end{equation}
and $C:=\lambda_{1}^{-1}$ is optimal, where $\lambda_{1}>0$ is the
smallest non-zero eigenvalue of $-L^{a}$, cf. \cite[Example 1.1.2]{Wang}.
Observe that then there exists a non-constant function $f\in S$ such
that $L^{a}f=-\lambda_{1}f$. For such an eigenfunction $f$, we get
by the Poincar\'{e} inequality \eqref{eq:poincare} that
\begin{equation}
\begin{aligned}\int_{\mathbb{T}^{d}}\left[\langle L^{a}b\nabla f,b\nabla f\rangle-\langle b\nabla f,b\nabla L^{a}f\rangle\right]\,d\xi= & -\sum_{i=1}^{N}\Acal\left(\langle b_{i},\nabla f\rangle,\langle b_{i},\nabla f\rangle\right)+\lambda_{1}\int_{\mathbb{T}^{d}}|b\nabla f|^{2}\,d\xi\\
\le & \lambda_{1}\sum_{i=1}^{N}\left(\int_{\mathbb{T}^{d}}\langle b_{i},\nabla f\rangle\,d\xi\right)^{2}=0,
\end{aligned}
\label{eq:example_for_b}
\end{equation}
where, in the last step, we have used integration by parts and (E).
Hence in the situation of a Poincar\'{e} inequality for $\Acal$ and
for general $b$ satisfying (D), the estimate \eqref{eq:example_for_b},
combined with \eqref{eq:integratedBE}, shows that $c\le0$ is necessary
for \eqref{eq:WDC} to hold.

Now, we shall prove an approximative commutator bound which is needed
later.
\end{rem}

\begin{lem}
\label{lem:commutator-estimate} Assume that (E), (R), and (D) hold.
Let $u\in H$. Then there exists $c\in\R$ such that for every $\delta>0$
and every $\alpha\ge0$, $\beta>0$,
\[
\sum_{i=1}^{N}\Acal_{\alpha}^{(\beta)}(\langle b_{i},\nabla J_{\delta}^{a}u\rangle,\langle b_{i},\nabla J_{\delta}^{a}u\rangle)+\Acal_{\alpha}^{(\beta)}(J_{\delta}^{a}L^{b}J_{\delta}^{a}u,u)\le-c\Acal(J_{\delta}^{a}u,J_{\delta}^{a}u).
\]
\end{lem}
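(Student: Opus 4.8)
The plan is to show that, once the $\alpha$-terms cancel, the left-hand side is \emph{precisely} the negative of the quantity occurring in the weak defective commutation condition \eqref{eq:WDC}, evaluated at $f:=J_\delta^a u$, and then to invoke Proposition \ref{prop:WDC}. Write $w:=J_\delta^a u$; by uniform ellipticity of $-L^a$ one has $w\in H^{2}(\T^d)\subset D(L^b)$, hence $L^b w=\div(b^{\ast}b\nabla w)\in H$ and $(L^b w,v)_H=-\Bcal(w,v)$ for every $v\in S$. I will repeatedly use the elementary identities $\Acal^{(\beta)}(p,q)=\beta(p-\beta G_\beta^a p,q)_H=\beta(p,q-\beta G_\beta^a q)_H$ for $p,q\in H$ (self-adjointness of $G_\beta^a$ on $H$), the self-adjointness of $J_\delta^a$ on $H$, and the commutation $J_\delta^a G_\beta^a=G_\beta^a J_\delta^a$.

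First I would dispose of the $\alpha$-part. Splitting $\Acal_\alpha^{(\beta)}=\Acal^{(\beta)}+\alpha(\cdot,\cdot)_H$, the $\alpha$-contribution to the left-hand side is
\[
\alpha\Big[\sum_{i=1}^N\big((b\nabla w)_i,(b\nabla w)_i\big)_H+(J_\delta^a L^b w,u)_H\Big]=\alpha\big[\Bcal(w,w)+(L^b w,w)_H\big]=0,
\]
using $\sum_i\|(b\nabla w)_i\|_H^2=\int_{\T^d}|b\nabla w|^2\,d\xi=\Bcal(w,w)$, self-adjointness of $J_\delta^a$, and $(L^b w,w)_H=-\Bcal(w,w)$. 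Hence the left-hand side reduces, for every $\alpha\ge0$, to $\sum_i\Acal^{(\beta)}((b\nabla w)_i,(b\nabla w)_i)+\Acal^{(\beta)}(J_\delta^a L^b w,u)$. The first term equals $\beta\int_{\T^d}\langle b\nabla w-\beta G_\beta^a b\nabla w,b\nabla w\rangle\,d\xi$ with $G_\beta^a$ acting componentwise; for the second, using the identities above together with $J_\delta^a u=w$ and $J_\delta^a G_\beta^a u=G_\beta^a w\in S$,
\[
\Acal^{(\beta)}(J_\delta^a L^b w,u)=\beta\big(L^b w,\,w-\beta G_\beta^a w\big)_H=-\beta\int_{\T^d}\langle b\nabla w,\,b\nabla w-\beta b\nabla G_\beta^a w\rangle\,d\xi .
\]
Adding the two expressions, the $\int_{\T^d}|b\nabla w|^2$ contributions cancel, leaving exactly $-\beta\int_{\T^d}\langle\beta G_\beta^a b\nabla w-\beta b\nabla G_\beta^a w,\,b\nabla w\rangle\,d\xi$; by \eqref{eq:WDC} with $f=w$ this is $\le -c\,\Acal(w,w)=-c\,\Acal(J_\delta^a u,J_\delta^a u)$, which is the claim, with the same constant $c\le0$ as in Proposition \ref{prop:WDC}.

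Since the argument is a chain of identities up to the single application of \eqref{eq:WDC}, I do not expect a genuine obstacle; the points needing care are the bookkeeping behind the $\alpha$-cancellation, the memberships $w=J_\delta^a u\in H^{2}\subset D(L^b)$ and $w-\beta G_\beta^a w\in S$ (so that $(L^b w,v)_H=-\Bcal(w,v)$ is applicable), and the commutation of $J_\delta^a$ with $G_\beta^a$. No density or approximation step is required, because \eqref{eq:WDC} is already available for every $f\in S$ and $J_\delta^a u\in D(L^a)\subset S$.
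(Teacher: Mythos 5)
Your proof is correct and follows essentially the same route as the paper: after separating and cancelling the $\alpha$-part (which the paper handles implicitly within one compact chain of equalities), both arguments reduce the left-hand side to $-\beta\int_{\T^d}\langle\beta G_{\beta}^{a}b\nabla w-\beta b\nabla G_{\beta}^{a}w,b\nabla w\rangle\,d\xi$ with $w=J_{\delta}^{a}u$, and then invoke Proposition \ref{prop:WDC} with $f=w$. The only difference is bookkeeping style; your explicit verification of the memberships $w\in H^{2}\subset D(L^{b})$ and $w-\beta G_{\beta}^{a}w\in S$ is a harmless addition.
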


\begin{proof}
Let $u\in H$, let $\delta,\beta>0$, $\alpha\ge0$. Set $y_{\delta}:=J_{\delta}^{a}u$.
Then by Proposition \ref{prop:WDC} there exists $c\in\R$, such that
\[
\begin{aligned} & \sum_{i=1}^{N}\Acal_{\alpha}^{(\beta)}\left(\langle b_{i},\nabla J_{\delta}^{a}u\rangle,\langle b_{i},\nabla J_{\delta}^{a}u\rangle\right)+\Acal_{\alpha}^{(\beta)}\left(J_{\delta}^{a}L^{b}J_{\delta}^{a}u,u\right)\\
= & \beta\left[\Bcal(\beta G_{\beta}^{a}y_{\delta},y_{\delta})-\sum_{i=1}^{N}\left(\beta G_{\beta}^{a}(\langle b_{i},\nabla y_{\delta}\rangle),\langle b_{i},\nabla y_{\delta}\rangle\right)_{H}\right]\\
= & \beta\int_{\mathbb{T}^{d}}\left\langle \beta b\nabla G_{\beta}^{a}y_{\delta}-\beta G_{\beta}^{a}b\nabla y_{\delta},b\nabla y_{\delta}\right\rangle \,d\xi\\
\le & -c\Acal(y_{\delta},y_{\delta}).
\end{aligned}
\]
\end{proof}
\begin{cor}
\label{cor:forA3} Assume that (E), (R), and (D) hold. Let $u\in S$.
Then the previous lemma implies
\[
\Acal_{\alpha}^{(\beta)}(u,J_{\delta}^{a}L^{b}J_{\delta}^{a}u)\le(-c\vee0)\Acal(u,u)\quad\forall\delta,\beta>0,
\]
where $c\in\R$ does neither depend on $\delta$, nor on $\beta$.
\end{cor}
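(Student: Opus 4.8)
The plan is to obtain the corollary essentially as a bookkeeping consequence of Lemma \ref{lem:commutator-estimate}, by dropping a manifestly nonnegative term, invoking the $\Acal$-contractivity of the resolvent $J_\delta^a$, and symmetrising the approximate form.

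First I would record two elementary facts about the approximate forms. For $\beta>0$ the bilinear form $\Acal^{(\beta)}(v,w)=\beta(v-\beta G_\beta^a v,w)_H$ is \emph{symmetric} on $H$ because $G_\beta^a$ is self-adjoint on $H$, and it is \emph{nonnegative}, $\Acal^{(\beta)}(w,w)\ge 0$ for all $w\in H$ --- this is the standard monotone approximation $\Acal^{(\beta)}\uparrow\Acal$ (see e.g. \cite[Chapter I]{MR}; spectrally $\Acal^{(\beta)}(w,w)=\int_0^\infty\frac{\beta\lambda}{\beta+\lambda}\,d(E_\lambda^a w,w)_H$ with $-L^a=\int_0^\infty\lambda\,dE_\lambda^a$). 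Both properties pass to $\Acal_\alpha^{(\beta)}=\Acal^{(\beta)}+\alpha(\cdot,\cdot)_H$ for $\alpha\ge 0$. I would also note that for $u\in S$ one has $J_\delta^a u\in D(L^a)\subset S$, so $\langle b_i,\nabla J_\delta^a u\rangle\in H$ (as $b_i\in C^1$), and that $J_\delta^a L^b J_\delta^a u$ is a well-defined element of $H$ when $L^b$ is read as the bounded map $S\to S^\ast$ and $J_\delta^a$ as its bounded extension $S^\ast\to S$, exactly as in \eqref{eq:defective} and Lemma \ref{lem:comm-implies-M}. Finally, $J_\delta^a$ is a contraction for the seminorm $\Acal(\cdot,\cdot)^{1/2}$ on $S$, i.e. $\Acal(J_\delta^a u,J_\delta^a u)\le\Acal(u,u)$; this follows from $\Acal(J_\delta^a u,J_\delta^a u-u)=-\frac1\delta\|J_\delta^a u-u\|_H^2\le 0$ and the Cauchy--Schwarz inequality for $\Acal$, or directly from spectral calculus using $(1+\delta\lambda)^{-2}\le 1$.

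With these in hand the proof is a short chain of inequalities. By nonnegativity, the sum $\sum_{i=1}^N\Acal_\alpha^{(\beta)}(\langle b_i,\nabla J_\delta^a u\rangle,\langle b_i,\nabla J_\delta^a u\rangle)$ on the left-hand side of Lemma \ref{lem:commutator-estimate} is $\ge 0$ and may be discarded, giving, with the constant $c\in\R$ from that lemma,
\[
\Acal_\alpha^{(\beta)}\bigl(J_\delta^a L^b J_\delta^a u,u\bigr)\le -c\,\Acal(J_\delta^a u,J_\delta^a u)\le (-c\vee 0)\,\Acal(J_\delta^a u,J_\delta^a u)\le (-c\vee 0)\,\Acal(u,u),
\]
where the second step uses $\Acal(J_\delta^a u,J_\delta^a u)\ge 0$ and the third uses $(-c\vee 0)\ge 0$ together with the contractivity just noted. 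By symmetry of $\Acal_\alpha^{(\beta)}$ the left-hand side equals $\Acal_\alpha^{(\beta)}(u,J_\delta^a L^b J_\delta^a u)$, which is the asserted bound; it is uniform in $\delta,\beta>0$ and $\alpha\ge 0$ since $c$ is. I do not anticipate any real obstacle: the only points needing a word of justification are the nonnegativity of $\Acal^{(\beta)}$, the $\Acal$-contractivity of $J_\delta^a$, and the interpretation of $J_\delta^a L^b J_\delta^a u$, all of which are standard.
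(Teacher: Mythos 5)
Your argument is correct and is exactly the implicit argument the paper intends: the paper gives no proof of the corollary beyond the phrase ``the previous lemma implies,'' and the three ingredients you supply --- nonnegativity of the discarded first sum $\sum_i\Acal_\alpha^{(\beta)}(\langle b_i,\nabla J_\delta^a u\rangle,\langle b_i,\nabla J_\delta^a u\rangle)$, symmetry of $\Acal_\alpha^{(\beta)}$ on $H$, and the $\Acal$-contractivity $\Acal(J_\delta^a u,J_\delta^a u)\le\Acal(u,u)$ of the resolvent --- are precisely the steps that carry Lemma \ref{lem:commutator-estimate} to the stated form, with the chain $-c\,\Acal(J_\delta^a u,J_\delta^a u)\le(-c\vee 0)\Acal(J_\delta^a u,J_\delta^a u)\le(-c\vee 0)\Acal(u,u)$ valid regardless of the sign of $c$.
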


\subsection{\label{subsec:Examples} Examples}

The simplest example is of course that $d=N$ and $a\not=0$ as well
as $b$ both equal the $d\times d$-identity matrix times some real
constants. We also note that for $d=1$, condition (D) implies that
$b$ equals a real constant, so that combined with condition (R) it
follows that $a$ has to equal a real constant which has to be non-zero
by condition (E).
\begin{example}
\label{exa:comm-killing} Suppose that $a\equiv(\delta_{ij})$ (i.e.,
the Kronecker delta / identity matrix), and let $b\in C^{2}(\T^{d};\R^{N\times d})$
such that

\begin{equation}
\partial_{j}b_{il}+\partial_{l}b_{ij}=0\quad\forall1\le l,j\le d\;\forall1\le i\le N.\label{eq:killing}
\end{equation}
It is easily seen that (D) is satisfied. However, after contracting
$a$ twice in the formula for (R), we see that (R) is indeed satisfied.
Equality \eqref{eq:killing} is related to the notion of so-called
\emph{Killing vector fields}, see \cite{Ciotir:2016fe} for examples
in a related context.
\end{example}

\begin{example}
Suppose that $d=N=2$ and let $\xi=(t,s)\in\mathbb{T}^{2}$.
\[
a(\xi)=\begin{pmatrix}a_{1}(t,s)\\
a_{2}(t,s)
\end{pmatrix}:=\begin{pmatrix}h_{1}(t-s) & 0\\
0 & h_{2}(t-s)
\end{pmatrix},\quad b:\equiv\begin{pmatrix}1 & 1\\
1 & 1
\end{pmatrix},
\]
with $h_{1},h_{2}\in C^{2}(\T)$, $h_{1}>0$, $h_{2}>0$. Then (E),
(D) and (R) are satisfied. We shall omit the computations.
\end{example}

\section{\label{sec:Stochastic-VI} Stochastic variational inequalities (SVI)}

As above, $H=L^{2}(\mathbb{T}^{d})$, $S=H^{1}(\mathbb{T}^{d})$.
Let $U:=\mathbb{R}^{N}$. Denote by $L_{2}(U,H)$ the space of linear
Hilbert-Schmidt operators from $U$ to $H$. Let $B:S\to L_{2}(U,H)$
denote the linear operator 
\[
B(u)\zeta:=\sum_{i=1}^{N}\langle b_{i},\nabla u\rangle\zeta^{i},\quad u\in S,\;\zeta\in U.
\]

\begin{rem}
\label{rem:hilbert-schmidt}By Lemma \ref{lem:comm-implies-M}, $B$
is bounded from $S$ to $L_{2}(U,H)$ and its norm is given by
\[
\|B(u)\|_{L_{2}(U,H)}^{2}=\Bcal(u,u),\quad u\in S.
\]
\end{rem}

\begin{defn}
\label{def:svi-new} Let $x\in L^{2}(\Omega,\Fcal_{0},\P;H)$, $T>0$.
A progressively measurable\footnote{That is, for every $t\in[0,T]$ the map $X:[0,t]\times\Omega\to H$
is $\mathcal{B}([0,t])\otimes\mathcal{F}_{t}$-measurable.} map $X\in L^{2}([0,T]\times\Omega;H)$ is said to be an \emph{SVI-solution}
to \eqref{eq:main-spde-1} if there exists a constant $C>0$ such
that
\begin{enumerate}
\item \emph{(Regularity)}
\begin{equation}
\operatorname{ess\;sup}\displaylimits_{t\in[0,T]}\mathbb{E}\|X_{t}\|_{H}^{2}+2\mathbb{E}\int_{0}^{T}\Psi(X_{s})\,ds\le\E\|x\|_{H}^{2}.\label{eq:SVI_regularity-Stratonovich}
\end{equation}
\item \emph{(Variational inequality)} For every choice of \emph{admissible
test-elements} \emph{$(Z_{0},Z,G,P)$,} that is, by definition, $Z_{0}\in L^{2}(\Omega,\Fcal_{0},\P;S)$,
$Z\in L^{2}([0,T]\times\Omega;D(L^{b})\cap S)$, $G\in L^{2}([0,T]\times\Omega;H)$,
$P\in L(H)$ such that $G$ is progressively measurable, such that
$P(D(L^{b}))\subset D(L^{b})$ and such that the following equation
holds in It\^{o}-sense
\[
Z_{t}=Z_{0}+\int_{0}^{t}G_{s}\,ds+\frac{1}{2}\int_{0}^{t}P^{\ast}L^{b}PZ_{s}\,ds+\int_{0}^{t}BPZ_{s}\,dW_{s}\quad\forall t\in[0,T],
\]
we have that
\begin{equation}
\begin{aligned} & \E\|X_{t}-Z_{t}\|_{H}^{2}+2\E\int_{0}^{t}\Psi(X_{s})\,ds\\
\le & \E\|x-Z_{0}\|_{H}^{2}+2\E\int_{0}^{t}\Psi(Z_{s})\,ds\\
 & -2\E\int_{0}^{t}(G_{s},X_{s}-Z_{s})_{H}\,ds\\
 & -\E\int_{0}^{t}(L^{b}PZ_{s},PX_{s}-X_{s})_{H}\,ds-\E\int_{0}^{t}(X_{s},L^{b}(Z_{s}-PZ_{s}))_{H}\,ds
\end{aligned}
\label{eq:SVI-with-Dirichlet-operator}
\end{equation}
for almost all $t\in[0,T]$.
\end{enumerate}
If, additionally, it holds that $X\in C([0,T];L^{2}(\Omega;H))$,
we say that $X$ is a \emph{(time-)continuous SVI-solution to \eqref{eq:main-spde-1}.}
\end{defn}

\begin{defn}
We say that an $\{\Fcal_{t}\}$-adapted process $X\in L^{2}(\Omega,C([0,T];H))\cap L^{2}([0,T]\times\Omega;D(L^{b})\cap S)$
is an \emph{analytically strong solution }to \eqref{eq:main-spde-1}
with initial datum $x\in L^{2}(\Omega;H)$, if there exists $\eta\in L^{2}([0,T]\times\Omega;H)$
such that $\eta$ is progressively measurable and such that $\eta\in\partial\Psi(X)$
a.e. and we have that
\begin{equation}
X_{t}=x-\int_{0}^{t}\eta_{s}\,ds+\frac{1}{2}\int_{0}^{t}L^{b}X_{s}\,ds+\int_{0}^{t}\langle b\nabla X_{s},dW_{s}\rangle\quad\P\text{-a.s.}\label{eq:strong-sln}
\end{equation}
for all $t\ge0$.
\end{defn}

\begin{lem}
\label{lem:stronglemma} If $X$ is an analytically strong solution
to \eqref{eq:main-spde-1}, then $X$ is a time-continuous SVI-solution
to \eqref{eq:main-spde-1}.
\end{lem}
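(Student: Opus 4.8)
The plan is to obtain both defining properties of an SVI-solution from two applications of It\^o's formula for the squared $H$-norm, and to read off time-continuity essentially for free. The key structural fact, which encodes the It\^o--Stratonovich nature of \eqref{eq:main-spde-1}, is that the drift correction $\tfrac12 L^{b}$ cancels against the It\^o correction of the noise: since $L^{b}$ is the generator of the Dirichlet form $\Bcal$ and $\|B(u)\|_{L_{2}(U,H)}^{2}=\Bcal(u,u)$ by Remark \ref{rem:hilbert-schmidt}, one has $(L^{b}u,u)_{H}+\|B(u)\|_{L_{2}(U,H)}^{2}=-\Bcal(u,u)+\Bcal(u,u)=0$ for $u\in D(L^{b})$. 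Note also that for an analytically strong solution the drift $-\eta_{s}+\tfrac12 L^{b}X_{s}$ in \eqref{eq:strong-sln} is genuinely $H$-valued (because $X_{s}\in D(L^{b})$ and $\eta_{s}\in H$) and $B(X_{s})\in L_{2}(U,H)$, so the classical Hilbert-space It\^o formula applies, after a routine localization to deal with the local martingale terms.

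For the regularity estimate \eqref{eq:SVI_regularity-Stratonovich}, apply It\^o to $\|X_{t}\|_{H}^{2}$ along \eqref{eq:strong-sln}; after the cancellation above this collapses to
\[
\|X_{t}\|_{H}^{2}=\|x\|_{H}^{2}-2\int_{0}^{t}(\eta_{s},X_{s})_{H}\,ds+2\int_{0}^{t}(X_{s},B(X_{s})\,dW_{s})_{H}.
\]
Since $\theta(0)=0$ in (N), $\tilde\Psi(0)=0$ by \eqref{eq:Psi-tilde}, hence $\Psi(0)=0$ (the l.s.c. envelope of a nonnegative functional is nonnegative), and testing $\eta_{s}\in\partial\Psi(X_{s})$ against $0$ gives $(\eta_{s},X_{s})_{H}\ge\Psi(X_{s})\ge0$. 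Thus $\|X_{t}\|_{H}^{2}+2\int_{0}^{t}\Psi(X_{s})\,ds$ is dominated by $\|x\|_{H}^{2}$ plus a local martingale; localizing and passing to the limit via Fatou yields \eqref{eq:SVI_regularity-Stratonovich} for every $t\in[0,T]$.

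For the variational inequality \eqref{eq:SVI-with-Dirichlet-operator}, fix admissible test-elements $(Z_{0},Z,G,P)$; we may assume $\E\int_{0}^{t}\Psi(Z_{s})\,ds<\infty$, the right-hand side being $+\infty$ otherwise. Because $D(L^{b})\subset D(\Bcal)=S$ and $P(D(L^{b}))\subset D(L^{b})$, all of $X_{s},Z_{s},PX_{s},PZ_{s}$ lie in $S$, so integrations by parts against $L^{b}$ are licit. Apply It\^o to $\|X_{t}-Z_{t}\|_{H}^{2}$ using \eqref{eq:strong-sln} and the equation for $Z$: the quadratic-variation term equals $\int_{0}^{t}\Bcal(X_{s}-PZ_{s},X_{s}-PZ_{s})\,ds$ by linearity of $B$, and combining it with the $L^{b}$-drift contributions one verifies, by expanding in $\Bcal$, using its symmetry and integrating by parts, the identity
\[
(L^{b}X_{s},X_{s}-Z_{s})_{H}-(P(X_{s}-Z_{s}),L^{b}PZ_{s})_{H}+\Bcal(X_{s}-PZ_{s},X_{s}-PZ_{s})=-(L^{b}PZ_{s},PX_{s}-X_{s})_{H}-(X_{s},L^{b}(Z_{s}-PZ_{s}))_{H}.
\]
The remaining drift contributes $-2\int_{0}^{t}(\eta_{s}+G_{s},X_{s}-Z_{s})_{H}\,ds$; bounding $(\eta_{s},X_{s}-Z_{s})_{H}\ge\Psi(X_{s})-\Psi(Z_{s})$ by the subdifferential inequality and taking expectations (the martingale part being handled as before) gives exactly \eqref{eq:SVI-with-Dirichlet-operator}. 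Finally, $X\in L^{2}(\Omega,C([0,T];H))$ is part of the hypothesis, and together with the uniform bound just derived (indeed $\E\sup_{t\le T}\|X_{t}\|_{H}^{2}<\infty$ by Burkholder--Davis--Gundy applied to the martingale above) dominated convergence upgrades almost sure path continuity to continuity of $t\mapsto X_{t}$ in $L^{2}(\Omega;H)$, so $X$ is a time-continuous SVI-solution.

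The main obstacle is making the two It\^o-formula applications fully rigorous: one must confirm that the relevant drifts are $H$-valued (so that the classical Hilbert-space formula, not a variational Gelfand-triple version, suffices) and control the integrability of the stochastic integrands to justify the localization and the interchange of limit and expectation, which rests on the assumed $L^{2}$-integrability of $X$ in $D(L^{b})\cap S$ and of $\eta$, and on boundedness of $b$; the accompanying bookkeeping leading to the displayed integration-by-parts identity is routine but needs care because of the operator $P$.
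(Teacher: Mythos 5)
Your proof is correct and follows essentially the same route as the paper's: two applications of It\^o's formula for the squared $H$-norm, cancellation of $(L^{b}u,u)_{H}$ against $\|B(u)\|_{L_{2}(U,H)}^{2}$, the subdifferential inequality tested at $0$ and at $Z$, and a rearrangement of the $L^{b}$- and quadratic-variation terms into the two inner products appearing in \eqref{eq:SVI-with-Dirichlet-operator}. The displayed integration-by-parts identity matches the paper's computation (with $(P(X-Z),L^{b}PZ)_{H}=(P^{\ast}L^{b}PZ,X-Z)_{H}$), and your remarks on localization, BDG and the $L^{2}(\Omega;H)$-continuity spell out details the paper leaves implicit.
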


\begin{proof}
Compare with \cite[Proof of Remark 2.3]{GessToelle15}. Let $X$ be
a strong solution to \eqref{eq:formal-SPDE}. Then by It\^{o}'s formula
and a standard localization argument, compare with Remark \ref{rem:hilbert-schmidt}:
\[
\E\|X_{t}\|_{H}^{2}=\E\|x\|_{H}^{2}-2\E\int_{0}^{t}(\eta_{s},X_{s})_{H}\,ds+\E\int_{0}^{t}(L^{b}X_{s},X_{s})_{H}\,ds+\E\int_{0}^{t}\Bcal(X_{s},X_{s})\,ds.
\]
By the definition of the subdifferential $\partial\Psi$, we get that
\[
(-\eta,X)_{H}=(\eta,0-X)\le\Psi(0)-\Psi(X)=-\Psi(X)\quad dt\otimes\P\text{-a.e.}
\]
Hence \eqref{eq:SVI_regularity-Stratonovich} is satisfied.

Furthermore, let $(Z_{0},Z,G,P)$ be a fixed choice of admissible
test-elements. We have that 
\[
Z_{t}=Z_{0}+\int_{0}^{t}G_{s}\,ds+\frac{1}{2}\int_{0}^{t}P^{\ast}L^{b}PZ_{s}\,ds+\int_{0}^{t}B(PZ_{s})\,dW_{s}\quad\forall t\in[0,T].
\]
Consider
\[
d(X-Z)=(-\eta-G)\,dt+\frac{1}{2}(L^{b}X-P^{\ast}L^{b}PZ)\,dt+B(X-PZ)\,dW
\]
and we get by It\^{o}'s formula that for $t\in[0,T]$,
\[
\begin{aligned}\|X_{t}-Z_{t}\|_{H}^{2}= & \|x-Z_{0}\|_{H}^{2}+2\int_{0}^{t}(-\eta_{s}-G_{s},X_{s}-Z_{s})_{H}\,ds\\
 & +\int_{0}^{t}(L^{b}X_{s}-P^{\ast}L^{b}PZ_{s},X_{s}-Z_{s})_{H}\,ds\\
 & +2\int_{0}^{t}(X_{s}-Z_{s},B(X_{s}-PZ_{s})\,dW_{s})_{H}\\
 & +\int_{0}^{t}\|B(X_{s}-PZ_{s})\|_{L_{2}(U,H)}^{2}.
\end{aligned}
\]
Taking expectations yields for $t\in[0,T]$,
\[
\begin{aligned}\E\|X_{t}-Z_{t}\|_{H}^{2}= & \E\|x-Z_{0}\|_{H}^{2}+2\E\int_{0}^{t}(-\eta_{s}-G_{s},X_{s}-Z_{s})_{H}\,ds\\
 & +\E\int_{0}^{t}(L^{b}X_{s}-P^{\ast}L^{b}PZ_{s},X_{s}-Z_{s})_{H}\,ds\\
 & +\E\int_{0}^{t}\Bcal(X_{s}-PZ_{s},X_{s}-PZ_{s})\,ds\\
= & \E\|x-Z_{0}\|_{H}^{2}+2\E\int_{0}^{t}(-\eta_{s}-G_{s},X_{s}-Z_{s})_{H}\,ds\\
 & +\E\int_{0}^{t}\Bcal(X_{s},Z_{s}-PZ_{s})\,ds+\E\int_{0}^{t}\Bcal(PZ_{s},PX_{s}-X_{s})\,ds.
\end{aligned}
\]
The proof is completed by the application of symmetry of $L^{b}$
in $H$ and of the subdifferential property for $\eta\in\partial\Psi(X)$
\[
(-\eta,X-Z)\le\Psi(Z)-\Psi(X)\quad dt\otimes\P\text{-a.e.}
\]
\end{proof}
\begin{rem}
In \cite{Ciotir:2016fe}, a weaker notion for a solution for an analogue
of equation \eqref{eq:formal-SPDE} on a bounded, convex domain with
Neumann boundary conditions was proposed. Translated to our situation,
in \eqref{eq:SVI-with-Dirichlet-operator}, one would test the SVI
with elements $(Z_{0},Z,G)$ and $P\equiv1$ (the identity) --- making
the notion a weaker one compared to the one of Definition \ref{def:svi-new}.
However, in \cite[Definition 3.1]{Ciotir:2016fe}, less regularity
for $Z$ was demanded. Let us point out again that we are able to
consider more general gradient noise $B$ here.
\end{rem}

\section{\label{sec:The-main-result} The main result and its proof}
\begin{thm}
\label{thm:mainmainthm} Suppose that condition (N) holds for $\psi$
and that conditions (E) and (D) hold. Suppose that condition (BE)
holds for $a$. Suppose that condition (R) holds.

Then, for every initial datum $x\in L^{2}(\Omega,\Fcal_{0},\P;H)$
and every $T>0$, there exists a unique adapted time-continuous SVI-solution
$X\in C([0,T];L^{2}(\Omega;H))$ to the SPDE \eqref{eq:main-spde-1}
such that
\begin{equation}
\operatorname{ess\;sup}\displaylimits_{t\in[0,T]}\E\|X_{t}-Y_{t}\|_{H}^{2}\le\E\|x-y\|_{H}^{2}\label{eq:uniqueness-bound}
\end{equation}
where $Y\in C([0,T];L^{2}(\Omega;H))$ is the unique SVI-solution
to another initial datum $y\in L^{2}(\Omega,\Fcal_{0},\P;H)$.
\end{thm}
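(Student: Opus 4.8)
The strategy is the classical one for SVI-solutions: construct solutions by a multi-step approximation, establish uniform a priori estimates (using the curvature condition (BE) and the commutator bound from Proposition~\ref{prop:WDC}), pass to the limit to obtain existence of an SVI-solution, and then prove uniqueness and the contraction estimate \eqref{eq:uniqueness-bound} directly from the variational inequality \eqref{eq:SVI-with-Dirichlet-operator}. First I would regularize the drift by replacing $\Psi$ with the Moreau--Yosida-type regularization built from $\psi^\lambda$ (controlled via Lemma~\ref{lem:(N)-lemma}) and simultaneously regularize the noise coefficient by inserting the resolvent $J_\delta^a$, i.e. work with $u\mapsto J_\delta^a L^b J_\delta^a u$ as the It\^o--Stratonovich correction and $B(J_\delta^a\cdot)$ as the noise, so that the approximate equation has globally Lipschitz, bounded coefficients and a unique analytically strong solution $X^{\lambda,\delta}$ by standard variational SPDE theory (the conditions recalled in Appendix~\ref{sec:app-ex}); by Lemma~\ref{lem:stronglemma} each $X^{\lambda,\delta}$ is a time-continuous SVI-solution of its own regularized equation.

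The core analytic work is the a priori estimates. Applying It\^o's formula to $\|X_t^{\lambda,\delta}\|_H^2$, the martingale term vanishes in expectation and the noise-plus-correction terms combine, after using $\div b_i=0$, into a form that is exactly controlled by the weak defective commutation condition: this is where Lemma~\ref{lem:commutator-estimate} and Corollary~\ref{cor:forA3} enter, bounding the dangerous cross terms by $(-c\vee 0)\,\Acal(X_s^{\lambda,\delta},X_s^{\lambda,\delta})$, which is in turn absorbed into the dissipation coming from $-\partial\Psi^\lambda$ via Lemma~\ref{lem:resolvent-contraction-lemma} (the $BE(K,\infty)$ input, guaranteeing $\tilde\Psi(J_\delta^0\cdot)\le\tilde\Psi(\cdot)$ and the higher-order estimate needed to tame $\Acal$). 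A Gronwall argument then yields the uniform bound \eqref{eq:SVI_regularity-Stratonovich}, i.e. $\operatorname{ess\,sup}_t\E\|X_t^{\lambda,\delta}\|_H^2+2\E\int_0^T\Psi(X_s^{\lambda,\delta})\,ds\le\E\|x\|_H^2$, uniformly in $\lambda,\delta$. From this, weak compactness in $L^2([0,T]\times\Omega;H)$ (and $L^2([0,T]\times\Omega;S)$, since $\Psi$ dominates the $\Acal$-seminorm modulo lower-order terms by coercivity in (N)) gives a limit point $X$; Lemma~\ref{lem:comm-implies-M} is what lets one pass to the limit in the correction term $J_\delta^a L^b J_\delta^a Z\to L^b Z$ weakly, and lower semicontinuity of $\Psi$ together with the convexity structure of the variational inequality lets one pass to the limit in \eqref{eq:SVI-with-Dirichlet-operator} (after first verifying the inequality at the regularized level by an approximate It\^o formula, as announced in the introduction). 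Progressive measurability of $X$ is preserved under these limits.

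For uniqueness and the estimate \eqref{eq:uniqueness-bound}: given two SVI-solutions $X,Y$ with data $x,y$, one would like to plug $Y$ (suitably regularized to be an admissible test-element) into the SVI for $X$ and vice versa, add the two inequalities, and watch the $\Psi$-terms cancel, leaving $\E\|X_t-Y_t\|_H^2\le\E\|x-y\|_H^2$ plus error terms that vanish as the regularization is removed. The technical point is that $Y$ is only known to lie in $L^2([0,T]\times\Omega;H)$ with $\Psi(Y)$ integrable, not in $D(L^b)$, so it is not directly admissible; the fix is to regularize $Y$ by $J_\delta^a$ (and a time-mollification if needed), use Lemma~\ref{lem:comm-implies-M} and Corollary~\ref{cor:forA3} to control the resulting $L^b$-terms and show they converge appropriately, and use Lemma~\ref{lem:resolvent-contraction-lemma} (together with Remark~\ref{rem:Note-that}) to show $\Psi(J_\delta^a Y)\to\Psi(Y)$ in the limit. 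Time-continuity of $X$ in $L^2(\Omega;H)$ then follows from the energy identity at the regularized level and the contraction property (a Cauchy argument along the approximating sequence). The main obstacle, as usual in this circle of ideas, is precisely this last regularization-of-the-test-element step: making $Y$ an admissible test-element requires that $J_\delta^a$ interact well with $L^b$ and with $\Psi$ simultaneously and uniformly, which is exactly the reason conditions (R) and (BE) are imposed and why Lemmas~\ref{lem:resolvent-contraction-lemma}, \ref{lem:comm-implies-M}, \ref{lem:commutator-estimate} and Corollary~\ref{cor:forA3} were proved in advance; the bookkeeping of the various error terms and the order in which $\lambda\to0$ and $\delta\to0$ are taken is where care is needed.
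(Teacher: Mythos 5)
Your overall approximation strategy (Moreau--Yosida for $\psi$, resolvent $J_\delta^a$ inserted into the noise and into the It\^o--Stratonovich correction, It\^o formula combined with the commutator and curvature bounds) matches the paper in spirit, and you correctly identify the roles of Lemmas~\ref{lem:resolvent-contraction-lemma}, \ref{lem:comm-implies-M}, \ref{lem:commutator-estimate} and Corollary~\ref{cor:forA3}. However, you omit the \emph{viscosity regularization} $-\varepsilon L^a$ in the drift (the paper's $A^{\lambda,\delta,\varepsilon}$ in \eqref{eq:Def-A}), and this is not a cosmetic difference. That term produces $2\varepsilon\,\E\int_0^T\|L^a X_t\|_H^2\,dt$ on the left of the higher-order a priori estimate \eqref{eq:a_priori}, and it is precisely this quantity that controls the spatial rough limit $\delta\to0$: the Cauchy estimate there carries a factor $1/\varepsilon$ and hinges on a uniform-in-$\delta$ bound on $\|L^a X^{\lambda,\delta,\varepsilon}\|_H$. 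Without $\varepsilon>0$ you have no such bound and the $\delta$-Cauchy argument does not close. Relatedly, the higher-order bound \eqref{eq:a_priori} requires initial data $x_n\in S$, so the initial-data approximation $n\to\infty$ is a genuine additional step, not folded into ``weak compactness''. Your uniform $H$-bound (the paper's Proposition~\ref{prop:H_Bound}) is correct but insufficient on its own for the limit passage.

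The more serious gap is in your uniqueness argument. You propose to regularize an arbitrary SVI solution $Y$ by $J_\delta^a$ (and time-mollification), plug it into the SVI for $X$ and vice versa, and add. This cannot work with the paper's definition: an admissible test-element $(Z_0,Z,G,P)$ in Definition~\ref{def:svi-new} requires $Z$ to be an It\^o process of the specific form $Z_t=Z_0+\int_0^t G_s\,ds+\tfrac12\int_0^t P^\ast L^b P Z_s\,ds+\int_0^t BPZ_s\,dW_s$, whereas an arbitrary SVI solution $Y$ need not satisfy \emph{any} It\^o equation --- that is exactly why one resorts to SVI solutions for this singular drift. Applying $J_\delta^a$ (or a time-mollifier) to $Y$ does not turn it into such an It\^o process, so $J_\delta^a Y$ is not admissible. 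The paper avoids this entirely: it takes $X$ an arbitrary SVI solution with datum $x$ and tests against the \emph{constructed} analytically strong approximations $Z=X^{\lambda,\delta,\varepsilon,n}$ with a possibly different smooth datum $y_n\in L^2(\Omega,\Fcal_0,\P;S)$, choosing $G=\operatorname{div}(a^\ast\phi^\lambda(a\nabla Z))+\varepsilon L^a Z$ and $P=J_\delta^a$; these \emph{are} genuine It\^o processes by Theorem~\ref{thm:sln-ex-GT} and hence admissible. Passing $\lambda\to0$, $\delta\to0$, $\varepsilon\to0$ and finally $n\to\infty$ yields \eqref{eq:uniqueness-bound}, and uniqueness follows by taking $y=x$. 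Your symmetric ``test each against the other'' scheme is the standard pattern in deterministic variational inequalities, but it does not transfer here precisely because neither solution is known a priori to be a semimartingale.
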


The remaining part of this section is devoted to the proof of Theorem
\ref{thm:mainmainthm}.

\subsection{The approximating equation}

Consider the \emph{Gelfand triple} of dense and compact embeddings
\[
S\hookrightarrow H\hookrightarrow S^{\ast}.
\]
Suppose that conditions (E), (D) and condition $BE(K,\infty)$ hold
for $a$ with $K\in\R$. There are two cases. If $K\le0$, we shall
equip $S$ with the equivalent norm $(\Acal_{1-2K})^{1/2}=(\Acal+(1-2K)\|\cdot\|_{H}^{2})^{1/2}$.
If $K>0$, it is sufficient to consider $\Acal_{1}^{1/2}$. Note that
these norms are equivalent to the standard norm of $H^{1}(\T^{d})$
by \cite[Chapter I, Exercise 2.1]{MR} and assumption (E). From now
on, we shall concentrate on the first case $K\le0$, as it is slightly
more involved. We remark that in the latter case $K>0$, we can recover
the a priori bounds proved below from the first case by a standard
perturbation argument.

Let us denote by $\{W_{t}\}_{t\ge0}$ a cylindrical Wiener process
in $U:=\R^{d}$ for the stochastic basis $(\Omega,\mathcal{F},\{\mathcal{F}_{t}\}_{t\ge0},\mathbb{P})$.
Consider the It\^{o}-equation in the above Gelfand triple
\[
dX_{t}+A^{\lambda,\delta,\eps}(X_{t})\,dt=B^{\delta}(X_{t})\,dW_{t},\quad X_{0}=x_{n},
\]
where $x_{n}\in L^{2}(\Omega,\Fcal_{0},\P;S)$, and where, from now
on, whenever it seems convenient, we suppress the indices in the notation
as in $X=X^{n,\lambda,\delta,\varepsilon}$. Here, we define for $\lambda,\delta,\varepsilon>0$:
\begin{equation}
A^{\lambda,\delta,\varepsilon}(x):=-\operatorname{div}(a^{\ast}\phi^{\lambda}(a\nabla x))-\varepsilon L^{a}x-\frac{1}{2}J_{\delta}^{a}L^{b}J_{\delta}^{a}x,\label{eq:Def-A}
\end{equation}
writing $\phi^{\lambda}=\partial\psi^{\lambda}$, $\lambda>0$, meaning
that $\phi^{\lambda}$ is the Yosida-approximation of $\phi$, whereas
$\psi^{\lambda}$ denotes the Moreau-Yosida approximation of $\psi$,
see \cite[p. 266]{A}. Furthermore, for $\zeta=(\zeta^{1},\ldots,\zeta^{d})\in U$,
$x\in S$, $t\in[0,T]$, 
\begin{equation}
B^{\delta}(x)\zeta:=\sum_{i=1}^{d}\langle b_{i},\nabla J_{\delta}^{a}x\rangle\zeta^{i}.\label{eq:Def-B}
\end{equation}

\begin{prop}
\label{prop:conds} Suppose that condition (N) holds for $\psi$ and
that conditions (E) and (D) hold. Suppose that condition (BE) holds
for $a$ with $K\le0$. Suppose that condition (R) holds. Then $A^{\lambda,\delta,\eps}$
and $B^{\delta}$ as defined in \eqref{eq:Def-A}, and \eqref{eq:Def-B}
respectively, satisfy conditions (A1)--(A3) in Appendix \ref{sec:app-ex},
and (B1)--(B2) in Appendix \ref{sec:app-ex} respectively, for $U:=\mathbb{R}^{d}$
and all $\lambda,\delta,\varepsilon>0$.
\end{prop}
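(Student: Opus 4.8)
The plan is to verify, for each fixed choice of parameters $\lambda,\delta,\eps>0$, the standard hemicontinuity, monotonicity, coercivity and growth conditions (A1)--(A3) for the operator $A^{\lambda,\delta,\eps}:S\to S^{\ast}$ together with the Lipschitz-type and growth conditions (B1)--(B2) for $B^{\delta}:S\to L_{2}(U,H)$ appearing in Appendix \ref{sec:app-ex}. Throughout I would work in the Gelfand triple $S\hookrightarrow H\hookrightarrow S^{\ast}$ with $S$ equipped with the equivalent norm $\Acal_{1-2K}^{1/2}$ (so that constants coming from (BE) are absorbed), using that $J_\delta^a$ is a contraction on $H$ and on $S$, that $L^{(\delta),a}:=\tfrac1\delta(J_\delta^a-1)$ and $J_\delta^a L^b J_\delta^a$ are \emph{bounded} linear operators on $H$ (with norms controlled by $\delta$), and that $\phi^\lambda=\partial\psi^\lambda$ is globally Lipschitz with constant $1/\lambda$ and of linear growth.

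The first block of steps treats the drift. For hemicontinuity (A1): the map $x\mapsto {}_{S^\ast}\langle A^{\lambda,\delta,\eps}(x+ty),z\rangle_S$ is continuous in $t\in\R$ because $-\eps L^a$ and $-\tfrac12 J_\delta^a L^b J_\delta^a$ are linear bounded (hence continuous $S\to S^\ast$, indeed $H\to H$), while $\zeta\mapsto a^\ast\phi^\lambda(a\zeta)$ is continuous and of linear growth, so dominated convergence applied to $\int_{\T^d}\langle a^\ast\phi^\lambda(a\nabla(x+ty)),\nabla z\rangle\,d\xi$ closes it. For monotonicity (A2) I would compute, for $x,y\in S$,
\[
{}_{S^\ast}\langle A^{\lambda,\delta,\eps}(x)-A^{\lambda,\delta,\eps}(y),x-y\rangle_S,
\]
and split it into three pieces: the $\phi^\lambda$-term is $\ge 0$ by monotonicity of the subdifferential $\partial\psi^\lambda$ together with (E) (ellipticity makes $\nabla(x-y)\mapsto a\nabla(x-y)$ injective, so nothing is lost); the $-\eps L^a$-term equals $\eps\,\Acal(x-y,x-y)\ge 0$; and the noise-correction term plus the part of the noise coefficient $B^\delta$ must be handled \emph{jointly} — this is where the commutator estimate is essential. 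Concretely, writing $w=x-y$, one needs
\[
-\tfrac12\bigl(J_\delta^a L^b J_\delta^a w,w\bigr)_H+\tfrac12\|B^\delta(w)\|_{L_2(U,H)}^2
=\tfrac12\sum_{i=1}^N\Bigl(\|\langle b_i,\nabla J_\delta^a w\rangle\|_H^2\Bigr)-\tfrac12\bigl(L^b J_\delta^a w,J_\delta^a w\bigr)_H,
\]
(using symmetry of $L^b$ and self-adjointness of $J_\delta^a$), and Lemma \ref{lem:commutator-estimate} (with $\alpha=0$ and $\beta\to\infty$, or directly Corollary \ref{cor:forA3}) bounds this from below by $\tfrac{c}{2}\Acal(J_\delta^a w,J_\delta^a w)\ge \tfrac{c}{2}\Acal(w,w)$ up to the contraction of $J_\delta^a$ on $S$; choosing the norm on $S$ as $\Acal_{1-2K}^{1/2}$ and using $c\le 0$, the deficit $\tfrac{c}{2}\Acal(w,w)$ is dominated by $\tfrac\eps2\Acal(w,w)$ only for $\eps$ large — since $\eps$ is \emph{fixed and possibly small}, instead I would state (A2) in its weak-monotonicity form $2\,{}_{S^\ast}\langle A^{\lambda,\delta,\eps}(x)-A^{\lambda,\delta,\eps}(y),x-y\rangle_S\ge \|B^\delta(x)-B^\delta(y)\|_{L_2(U,H)}^2 - C\|x-y\|_H^2$ (i.e. the combined coercivity-monotonicity condition of the variational framework), which is exactly what the commutator bound delivers with $C=C(c,a,b)$; this matches the hypotheses recalled in Appendix \ref{sec:app-ex}. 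For coercivity and boundedness (A3): testing $A^{\lambda,\delta,\eps}(x)$ against $x$ gives $\int\langle a^\ast\phi^\lambda(a\nabla x),\nabla x\rangle\,d\xi+\eps\Acal(x,x)-\tfrac12(J_\delta^a L^b J_\delta^a x,x)_H$; the $\phi^\lambda$-term is $\ge 0$, the $\eps$-term gives $\eps\Acal(x,x)$, and again pairing with $\tfrac12\|B^\delta(x)\|^2$ via Lemma \ref{lem:commutator-estimate} controls the cross term, yielding $2{}_{S^\ast}\langle A^{\lambda,\delta,\eps}(x),x\rangle_S+\|B^\delta(x)\|_{L_2(U,H)}^2\ge \eps\Acal(x,x)+\Acal(x,x)-C\|x\|_H^2\ge c_1\|x\|_S^2-c_2\|x\|_H^2$ in the chosen norm; the growth bound $\|A^{\lambda,\delta,\eps}(x)\|_{S^\ast}\le C_{\lambda,\delta,\eps}(1+\|x\|_S)$ follows from linear growth of $\phi^\lambda$ and boundedness of the two linear operators.

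The second block treats $B^\delta$. Condition (B1) (Lipschitz continuity $S\to L_2(U,H)$, or Hilbert--Schmidt continuity in the variational sense) follows from Remark \ref{rem:hilbert-schmidt}: $\|B^\delta(x)-B^\delta(y)\|_{L_2(U,H)}^2=\Bcal(J_\delta^a(x-y),J_\delta^a(x-y))\le \|b\|_\infty^2\kappa^{-1}\,\|J_\delta^a(x-y)\|_S^2\le C_\delta\|x-y\|_S^2$, since $J_\delta^a$ is bounded $S\to S$ (in fact a contraction in $\Acal_1^{1/2}$); for the weak-monotonicity pairing one only needs the $H$-to-$H$ estimate $\|B^\delta(x)-B^\delta(y)\|_{L_2(U,H)}^2\le C(\delta,a,b)\|x-y\|_H^2$, which holds because $\nabla J_\delta^a$ is bounded $H\to L^2$ with norm $O(\delta^{-1/2})$. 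Condition (B2) (linear growth) is the special case $y=0$. I would close by remarking that all constants depend on $\lambda,\delta,\eps$ but this is harmless since the proposition is stated for each fixed triple.

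\textbf{Main obstacle.} The only non-routine point — and the reason conditions (R), (E), (D) enter — is the joint sign control of the It\^o--Stratonovich correction term $-\tfrac12 J_\delta^a L^b J_\delta^a$ against the Hilbert--Schmidt energy $\tfrac12\|B^\delta(\cdot)\|^2$ of the (unbounded-in-the-limit) gradient noise: neither term is individually of the right sign or order, and their sum is coercive/monotone only modulo the lower-order correction $-c\Acal(J_\delta^a\cdot,J_\delta^a\cdot)$ supplied by Lemma \ref{lem:commutator-estimate}. Everything else reduces to monotonicity of $\partial\psi^\lambda$, ellipticity (E), and the elementary mapping properties of $J_\delta^a$ and $L^{(\delta),a}$ on $H$ and $S$.
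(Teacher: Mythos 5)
There is a genuine gap, and it stems from your misreading of what conditions (A1)--(A3) in Appendix \ref{sec:app-ex} actually assert. You treat them as the standard hemicontinuity / monotonicity / coercivity-and-growth package of the variational SPDE framework, but that is not what the paper states. In the paper, (A1) is \emph{maximal} monotonicity with non-empty values; (A2) is the operator-norm bound $\|y\|_{S^{\ast}}\le C\|x\|_{S}$ for all $y\in A(x)$; and (A3) is the specific estimate $2\,{}_{S^{\ast}}\langle y,L^{(\mu)}x\rangle_{S}\le C\|x\|_{S}^{2}$ \emph{uniformly in the Yosida parameter $\mu>0$}, where $L^{(\mu)}$ is the Yosida-approximation of the Laplace--Beltrami operator. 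Your proposed (A2) proof (testing $A(x)-A(y)$ against $x-y$ jointly with $\|B^{\delta}(x)-B^{\delta}(y)\|^{2}$) addresses a weak-monotonicity hypothesis that is not part of this framework, and your proposed (A3) proof tests $A(x)$ against $x$ rather than against $L^{(\mu)}x$. As a result, the central technical point of the paper's (A3) proof --- that the nonlinear piece pairs well against the Yosida iterate because, by the curvature-dimension condition (BE) and Lemma \ref{lem:resolvent-contraction-lemma}, the tilted resolvent $\tilde{J}_{1/\beta}$ contracts the energy $\int\psi_{\lambda}(a\nabla\cdot)\,d\xi$ (the ``Wang trick'' \eqref{eq:wangtrick}) --- is entirely missing from your argument. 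You do invoke Lemma \ref{lem:commutator-estimate} / Corollary \ref{cor:forA3}, and these are indeed what control the $J_{\delta}^{a}L^{b}J_{\delta}^{a}$ part in (A3), but on their own they cannot control the nonlinear and viscous parts of $A^{\lambda,\delta,\eps}$ paired against $L^{(\mu)}x$, which is where (BE) is used.

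A secondary issue: for (A1) you only argue hemicontinuity; hemicontinuity of a monotone operator is not by itself maximal monotonicity in the multi-valued / non-reflexive setting needed here, and in any case the paper's drift has a multi-valued structure in the limit $\lambda=0$ which makes the subdifferential characterization the natural route. The paper's argument is much simpler: the $\phi^{\lambda}$- and $\eps L^{a}$-parts form the subdifferential of a proper convex l.s.c.\ functional (hence are maximal monotone automatically), the operator $-\tfrac12 J_{\delta}^{a}L^{b}J_{\delta}^{a}$ is bounded linear and monotone, and Browder's theorem on sums of monotone operators gives maximality of the sum without checking hemicontinuity directly. Your treatment of (B1) and (B2) has the labels interchanged relative to the appendix (B1 is the growth bound $\|B^{\delta}(x)\|^{2}_{L_{2}(U,H)}\le C\|x\|_{S}^{2}+h$; B2 is the $H$-Lipschitz bound), but the underlying estimates $\Bcal(J_{\delta}^{a}x,J_{\delta}^{a}x)\le\|b\|_{\infty}^{2}\kappa^{-1}\Acal_{1-2K}(x,x)$ and $\Bcal(J_{\delta}^{a}w,J_{\delta}^{a}w)\le C_{\delta}\|w\|_{H}^{2}$ are correct and are essentially what the paper uses. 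The bottom line is that you need to re-derive (A3) from scratch: take $\mu>0$, pair $A^{\lambda,\delta,\eps}(x)$ against the Yosida iterate $\tilde{L}^{(\mu)}x$ (with $\tilde{L}=L^{a}+2K-1$ after renorming $S$), and handle the three drift pieces separately --- the nonlinear piece via Lemma \ref{lem:resolvent-contraction-lemma} and the resolvent-contraction of the convex energy, the $\eps L^{a}$-piece via symmetry and the spectral theorem, and the $J_{\delta}^{a}L^{b}J_{\delta}^{a}$-piece via Corollary \ref{cor:forA3}.
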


\begin{proof}
Fix $\lambda,\delta,\varepsilon>0$.
\begin{enumerate}
\item[(A1):]  The map 
\[
x\mapsto-\operatorname{div}(a^{\ast}\phi^{\lambda}(a\nabla x))-\eps L^{a}x
\]
 is the maximal monotone subdifferential of the l.s.c. map
\[
u\mapsto\int_{\mathbb{T}^{d}}\psi^{\lambda}(a\nabla u)\,d\xi+\frac{\eps}{2}\int_{\T^{d}}|a\nabla u|^{2}\,d\xi.
\]
In this regard, see also \cite[Proposition II.7.8]{Show} for the
chain rule of a convex functional and bounded linear operators. The
map 
\[
x\mapsto-\frac{1}{2}J_{\delta}^{a}L^{b}J_{\delta}^{a}x
\]
is linear and positive definite due to condition (E) and (D). As this
map is also bounded linear, the conditions of Browder's theorem on
the maximality of the sum of monotone operators are satisfied, see
\cite[Theorem, pp. 75--76]{Rockafellar:1970ew}, and hence (A1) holds.
\item[(A2):]  As $A^{\lambda,\delta,\eps}$ is a subpotential operator of a convex
energy which is bounded by $C(1+\|\cdot\|_{S}^{2})$ for some positive
constant $C>0$, condition (A2) easily follows with similar arguments
as in \cite[Proposition 7.1]{GessToelle14}.
\item[(A3):]  The claim for the nonlinear part follows from Lemma \ref{lem:resolvent-contraction-lemma}
and by similar arguments as in \cite[Proposition 7.1]{GessToelle14}.
Compare also to \cite{Ciotir:2016fe,VBMR} for the Dirichlet and Neumann
boundary cases respectively. The detailed argument can also be found
in \eqref{eq:wangtrick} in the proof of Theorem \ref{thm:main-bound}
below. Regarding the second part of the linear part, by Corollary
\ref{cor:forA3}, for every $\mu>0$, $x\in S$,
\[
\begin{aligned} & -(J_{\delta}^{a}L^{b}J_{\delta}^{a}x,\tilde{L}^{(\mu)}x)_{H}=\Acal_{1-2K}^{(1/\mu)}(x,J_{\delta}^{a}L^{b}J_{\delta}^{a}x)\\
\le & (-c\vee0)\Acal(x,x)\le(-c\vee0)\Acal_{1-2K}(x,x).
\end{aligned}
\]
 Note that due to the renorming of $S$, $\tilde{L}^{(\mu)}:=(L^{a}+2K-1)^{(\mu)}$,
plays the role of $L^{(\mu)}$ here.
\item[(B1):]  By a straightforward calculation, we get for $x\in S$,
\[
\|B^{\delta}(x)\|_{L_{2}(U,H)}^{2}=\Bcal(J_{\delta}^{a}x,J_{\delta}^{a}x)\le\|b\|_{\infty}^{2}\kappa^{-1}\Acal_{1-2K}(x,x).
\]
\item[(B2):]  By linearity, for $x,y\in S$,
\[
\begin{aligned} & \|B^{\delta}(x)-B^{\delta}(y)\|_{L_{2}(U,H)}^{2}=\|B^{\delta}(x-y)\|_{L_{2}(U,H)}^{2}\\
= & \Bcal(J_{\delta}^{a}(x-y),J_{\delta}^{a}(x-y))\le\|b\|_{\infty}^{2}\kappa^{-1}\Acal^{(1/\delta)}(x-y,x-y)\\
\le & \left(\frac{2}{\delta}+1\right)\|b\|_{\infty}^{2}\kappa^{-1}\|x-y\|_{H}^{2},
\end{aligned}
\]
compare also with \cite[Chapter I, Lemma 2.11]{MR}.
\end{enumerate}
\end{proof}
As a consequence, (A1)--(A3), (B1)--(B2) guarantee the existence
and uniqueness of limit solutions to \eqref{eq:Ito-multivalued},
see Appendix \ref{sec:app-ex} for the definition and the precise
result.

\subsection{A priori estimates}

As before, consider the single-valued It\^{o}-SPDE
\begin{equation}
dX_{t}+A^{\lambda,\delta,\varepsilon}(X_{t})\,dt=B^{\delta}(X_{t})\,dW_{t},\quad X_{0}=x_{n},\label{eq:approx-eq-B1B2}
\end{equation}
where $x_{n}\in L^{2}(\Omega,\Fcal_{0},\P;S)$, and where, as above,
we suppress the indices in the notation as in $X=X^{n,\lambda,\delta,\varepsilon}$.
The existence and uniqueness of limit solutions to \eqref{eq:approx-eq-B1B2}
follows from \cite[Theorem 4.6]{GessToelle14}, see Appendix \ref{sec:app-ex};.

For fixed $\delta>0$ and for every $m\in\mathbb{N}$, let $y\mapsto B^{\delta.m}(y)$
be progressively measurable maps on $S$ such that
\begin{enumerate}
\item[(i)]  each $B^{\delta,m}$ satisfies (B1)--(B2) with constants $C_{1}$
and $C_{2}$ not depending on $m$,
\item[(ii)]  each $B^{\delta,m}$ satisfies (B3) (with constants $C_{3}=C_{3}(m)$
typically depending on $m$),
\item[(iii)]  $\|B^{\delta,m}(y)-B^{\delta}(y)\|_{L_{2}(U,H)}\to0$ for every
$y\in S$ as $m\to\infty$.
\end{enumerate}
The existence of such a sequence of maps can be proved e.g. by introducing
an approximation step that employs standard mollifiers. Consider the
sequence of It\^{o}-processes
\begin{equation}
X_{t}^{m}=x_{n}-\int_{0}^{t}A^{\lambda,\delta,\varepsilon}(X_{s}^{m})\,ds+\int_{0}^{t}B^{\delta,m}(X_{s}^{m})\,dW_{s},\label{eq:approx-eq-B1B2B3}
\end{equation}
where $x_{n}\in L^{2}(\Omega,\Fcal_{0},\P;S)$, $m\in\mathbb{N}$.
The existence of such processes is guaranteed by \cite[Theorem 4.4]{GessToelle14},
see Appendix \ref{sec:app-ex}. The following proposition is a modification
of \cite[Proposition 14]{Toelle2018proceedings}.
\begin{prop}
\label{prop:H_Bound} Suppose that condition (N) holds and that condition
(E), (D) hold and that condition (BE) holds for $a$ with $K\le0$.
Let $\lambda,\delta,\varepsilon>0$. Let $x_{n}\in L^{2}(\Omega,\Fcal_{0},\P;S)$,
and let $X=X^{n,\lambda,\delta,\varepsilon}$ be a limit solution
to \eqref{eq:approx-eq-B1B2}. Then, we have that
\begin{equation}
\operatorname{ess\;sup}\displaylimits_{t\in[0,T]}\mathbb{E}\|X_{t}\|_{H}^{2}+2\E\int_{0}^{T}\int_{\T^{d}}\psi^{\lambda}(a\nabla X_{s})\,d\xi\,ds+2\varepsilon\mathbb{E}\int_{0}^{T}\Acal(X_{s},X_{s})\,ds\le\E\|x_{n}\|_{H}^{2}.\label{eq:energy-bound}
\end{equation}
\end{prop}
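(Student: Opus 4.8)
The plan is to apply It\^{o}'s formula to $\|X_t^m\|_H^2$ for the approximating equation \eqref{eq:approx-eq-B1B2B3}, obtain a uniform-in-$m$ bound, and then pass to the limit $m\to\infty$ using that $X^m$ converges to the limit solution $X$. First I would note that by the monotonicity/subpotential structure (conditions (A1)--(A3)) the processes $X^m$ are sufficiently regular (taking values in $S$, with $A^{\lambda,\delta,\eps}(X^m)\in L^2([0,T]\times\Omega;S^\ast)$) that the It\^{o} formula in the Gelfand triple $S\hookrightarrow H\hookrightarrow S^\ast$ applies, yielding
\[
\|X_t^m\|_H^2 = \|x_n\|_H^2 - 2\int_0^t {}_{S^\ast}\langle A^{\lambda,\delta,\eps}(X_s^m),X_s^m\rangle_S\,ds + 2\int_0^t (X_s^m,B^{\delta,m}(X_s^m)\,dW_s)_H + \int_0^t \|B^{\delta,m}(X_s^m)\|_{L_2(U,H)}^2\,ds.
\]
Taking expectations kills the martingale term (using (B1) and the a priori $L^2$-integrability of $X^m$), so the key is to estimate the remaining two terms together.

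Next I would unpack the drift pairing. Splitting $A^{\lambda,\delta,\eps}$ into its three pieces from \eqref{eq:Def-A}: the nonlinear part contributes ${}_{S^\ast}\langle -\div(a^\ast\phi^\lambda(a\nabla X_s^m)),X_s^m\rangle_S = \int_{\T^d}\langle \phi^\lambda(a\nabla X_s^m),a\nabla X_s^m\rangle\,d\xi \ge \int_{\T^d}\psi^\lambda(a\nabla X_s^m)\,d\xi$ by the subgradient inequality $\langle\partial\psi^\lambda(\zeta),\zeta\rangle\ge\psi^\lambda(\zeta)$ (since $\psi^\lambda(0)=0$); actually one uses $\langle \phi^\lambda(\zeta),\zeta\rangle\ge \psi^\lambda(\zeta)$ which holds because $\psi^\lambda$ is convex nonnegative with $\psi^\lambda(0)=0$. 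The $\eps$-part gives $\eps\,\Acal(X_s^m,X_s^m)$. The remaining crucial term is $-\frac12(J_\delta^a L^b J_\delta^a X_s^m, X_s^m)_H$, and this is precisely where the commutator machinery enters: I would combine it with the noise term $\tfrac12\|B^{\delta,m}(X_s^m)\|_{L_2(U,H)}^2$. In the limit $m\to\infty$ the latter becomes $\tfrac12\Bcal(J_\delta^a X_s,J_\delta^a X_s) = \tfrac12\sum_i \Acal_0^{(\infty)}(\langle b_i,\nabla J_\delta^a X_s\rangle,\langle b_i,\nabla J_\delta^a X_s\rangle)$-type expression, so that Lemma~\ref{lem:commutator-estimate} (with $\alpha=0$, or rather its $\beta\to\infty$ form) shows
\[
\tfrac12\|B^{\delta}(X_s)\|_{L_2(U,H)}^2 - \tfrac12(J_\delta^a L^b J_\delta^a X_s, X_s)_H \le 0,
\]
because $L^b$ is nonnegative (so $-\tfrac12(J_\delta^a L^b J_\delta^a X_s,X_s)_H\le 0$) and the commutator defect is controlled. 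Strictly, I expect one only needs the elementary observation that $(J_\delta^a L^b J_\delta^a x, x)_H = \Bcal(J_\delta^a x, J_\delta^a x) = \|B^\delta(x)\|_{L_2(U,H)}^2$ by symmetry of $L^b$ and $J_\delta^a$, so these two terms exactly cancel after taking the factor $\tfrac12$ into account — the commutator estimates are what was needed to make sense of $J_\delta^a L^b J_\delta^a$ on $S$ in the first place (Lemma~\ref{lem:comm-implies-M}).

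Putting this together gives, for each $m$ in the limit,
\[
\E\|X_t\|_H^2 + 2\E\int_0^t\!\!\int_{\T^d}\psi^\lambda(a\nabla X_s)\,d\xi\,ds + 2\eps\,\E\int_0^t\Acal(X_s,X_s)\,ds \le \E\|x_n\|_H^2,
\]
and taking the essential supremum over $t\in[0,T]$ yields \eqref{eq:energy-bound}. The two technical points I expect to require care are: (i) justifying the passage $m\to\infty$ — here I would use that limit solutions are obtained as strong limits in $L^2([0,T]\times\Omega;S)$ of the $X^m$ (from \cite[Theorem 4.6]{GessToelle14}), that $\psi^\lambda$ is convex and continuous with quadratic growth so $X\mapsto\E\int_0^T\int\psi^\lambda(a\nabla X)\,d\xi\,ds$ is weakly lower semicontinuous on $L^2([0,T]\times\Omega;S)$, and property (iii) of the $B^{\delta,m}$ together with (B1) and dominated convergence to handle the noise covariance; and (ii) the cancellation/estimate for the $L^b$-term, which is the one genuinely using the structure (D) and the commutation setup. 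The main obstacle is really (i): ensuring the It\^{o} formula and the limit procedure are legitimate at the level of the approximating equations with $B^{\delta,m}$ satisfying the stronger (B3), which is exactly why the auxiliary sequence $B^{\delta,m}$ was introduced — so that \cite[Theorem 4.4]{GessToelle14} gives genuine strong solutions on which It\^{o}'s formula is unambiguous, and only afterwards does one relax to $B^\delta$.
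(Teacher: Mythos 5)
Your plan is essentially the paper's own proof: apply It\^{o}'s formula in the Gelfand triple to $\|X^m_t\|_H^2$, use the subgradient inequality $\langle\phi^\lambda(\zeta),\zeta\rangle\ge\psi^\lambda(\zeta)$ for the nonlinear term, observe that the $L^b$-drift and the quadratic variation cancel exactly, and pass to the limit $m\to\infty$ using $X^m\to X$ in $L^2(\Omega;C([0,T];H))$, Minty's trick, lower semicontinuity of $\Acal$, and dominated convergence. You also correctly recognise, in the second half of your argument, that the commutator machinery of Lemma \ref{lem:commutator-estimate} is not needed here at all (it first enters in Theorem \ref{thm:main-bound}); only the elementary identity is required.

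Two sign slips are worth flagging, since if taken literally they would destroy the cancellation you need. Because $L^b$ is the (negative semidefinite) Dirichlet operator of $\Bcal$, the correct identity is $(J_\delta^a L^b J_\delta^a x,x)_H = (L^b J_\delta^a x,J_\delta^a x)_H = -\Bcal(J_\delta^a x,J_\delta^a x)$, not $+\Bcal(J_\delta^a x,J_\delta^a x)$ as you write near the end; likewise $-\tfrac12(J_\delta^a L^b J_\delta^a X_s,X_s)_H\ge 0$, not $\le 0$. With the corrected sign, the It\^{o} drift term contributes $-2\cdot(-\tfrac12)(J_\delta^a L^b J_\delta^a X_s,X_s)_H=-\Bcal(J_\delta^a X_s,J_\delta^a X_s)$, which exactly offsets $\|B^\delta(X_s)\|^2_{L_2(U,H)}=\Bcal(J_\delta^a X_s,J_\delta^a X_s)$; no factor of $\tfrac12$ appears in either contribution, contrary to what your displayed inequality suggests. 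With those corrections, your argument coincides with the paper's.
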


\begin{proof}
Let $x_{n}\in S$, $m\in\mathbb{N}$. We may apply the It\^{o} formula
\cite[Theorem 4.2.5]{PR07-2} for the Gelfand triple $S\subset H\subset S^{\ast}$
and the process \eqref{eq:approx-eq-B1B2B3}. Let $A^{\lambda,\delta}:S\to S^{\ast}$
be defined by
\[
A^{\lambda,\delta}(x):=-\operatorname{div}(a^{\ast}\phi^{\lambda}(a\nabla x))-\frac{1}{2}J_{\delta}^{a}L^{b}J_{\delta}^{a}x,\quad x\in S.
\]
We get for $t\in[0,T]$, after taking the expected value,
\[
\begin{aligned} & \mathbb{E}\|X_{t}^{m}\|_{H}^{2}-\E\|x_{n}\|_{H}^{2}+2\varepsilon\mathbb{E}\int_{0}^{t}\Acal(X_{s}^{m},X_{s}^{m})\,ds\\
\le & \mathbb{E}\int_{0}^{t}\left[-2\brrr{A^{\lambda,\delta}(X_{s}^{m}),X_{s}^{m}}+\|B^{\delta,m}(X_{s}^{m})\|_{L_{2}(U,H)}^{2}\right]\,ds\\
\le & \mathbb{E}\int_{0}^{t}\left[-2\brrr{A^{\lambda,\delta}(X_{s}^{m}),X_{s}^{m}}+\left(\|B^{\delta,m}(X_{s}^{m})-B^{\delta,m}(X_{s})\|_{L_{2}(U,H)}+\|B^{\delta,m}(X_{s})\|_{L_{2}(U,H)}\right)^{2}\right]\,ds\\
\le & \mathbb{E}\int_{0}^{t}\left[-2\brrr{A^{\lambda,\delta}(X_{s}^{m}),X_{s}^{m}}+\left(C\|X_{s}^{m}-X_{s}\|_{H}+\|B^{\delta,m}(X_{s})\|_{L_{2}(U,H)}\right)^{2}\right]\,ds,
\end{aligned}
\]
where $X$ is as in \eqref{eq:approx-eq-B1B2} and $C$ does not depend
on $m$. By \cite[Theorem 4.6]{GessToelle14} (cf. Theorem \ref{thm:ex-limit-sln}),
$X^{m}\to X$ in $L^{2}(\Omega;C([0,T];H))$ as $m\to\infty$. Note
that $A^{\lambda,\delta}:S\to S^{\ast}$ is monotone, single-valued,
continuous and bounded and thus by Minty's trick (see e.g. \cite[Remark 4.1.1]{PR07-2}),
$A^{\lambda,\delta}(X^{m})\rightharpoonup A^{\lambda,\delta}(X)$
weakly in $S^{\ast}$ as $m\to\infty$. Hence by (i), (iii) above,
by lower semi-continuity of $\Acal$ (see \cite[Chapter I, Lemma 2.12]{MR})
and by Lebesgue's dominated convergence theorem, we converge to the
inequality
\[
\begin{aligned} & \mathbb{E}\|X_{t}\|_{H}^{2}-\E\|x_{n}\|_{H}^{2}+2\varepsilon\mathbb{E}\int_{0}^{t}\Acal(X_{s},X_{s})\,ds\\
\le & \mathbb{E}\int_{0}^{t}\left[-2\brrr{A^{\lambda,\delta}(X_{s}),X_{s}}+\|B^{\delta}(X_{s})\|_{L_{2}(U,H)}^{2}\right]\,ds.
\end{aligned}
\]
Note that, as above, for $y\in H$, we have that 
\[
\|B^{\delta}(y)\|_{L_{2}(U,H)}^{2}=\Bcal(J_{\delta}^{a}y,J_{\delta}^{a}y).
\]
On the other hand, 
\[
(J_{\delta}^{a}L^{b}J_{\delta}^{a}y,y)_{H}=-\Bcal(J_{\delta}^{a}y,J_{\delta}^{a}y),
\]
for all $y\in H$. We get that
\[
\begin{aligned} & \mathbb{E}\|X_{t}\|_{H}^{2}-\E\|x_{n}\|_{H}^{2}+2\varepsilon\mathbb{E}\int_{0}^{t}\Acal(X_{s},X_{s})\,ds\\
\le & -2\E\int_{0}^{t}(\phi^{\lambda}(a\nabla X_{s}),a\nabla X_{s})_{H}\,ds\\
\le & -2\E\int_{0}^{t}\int_{\T^{d}}\psi^{\lambda}(a\nabla X_{s})\,d\xi\,ds,
\end{aligned}
\]
which yields the claim.

\end{proof}
Let $\beta>0$. Define renormed spaces $H_{\beta}:=L^{2}(\mathbb{T}^{d})$
with norm $\|u\|_{\beta}^{2}:=\Acal_{1-2K}^{(\beta)}(u,u)$. Obviously,
$\|\cdot\|_{H}\le\|\cdot\|_{\beta}\le\sqrt{2\beta+1}\|\cdot\|_{H}$
and $S\hookrightarrow H_{\beta}\hookrightarrow S^{\ast}$ forms a
family of Gelfand triples. The following theorem is a modification
of \cite[Theorem 15]{Toelle2018proceedings}.
\begin{thm}
\label{thm:main-bound} Suppose that conditions (N), (E), (R), and
(D) hold, and that condition (BE) holds for $a$ with $K\le0$. Let
$\lambda,\delta,\varepsilon>0$. Let $x_{n}\in L^{2}(\Omega,\Fcal_{0},\P;S)$,
and let $X=X^{n,\lambda,\delta,\varepsilon}$ be a limit solution
to \eqref{eq:approx-eq-B1B2}. Then, we have that
\begin{equation}
\operatorname{ess\;sup}\displaylimits_{t\in[0,T]}\mathbb{E}\left[\Acal_{1-2K}(X_{t},X_{t})\right]+2\varepsilon\E\int_{0}^{T}\|L^{a}X_{t}\|_{H}^{2}\,dt\le e^{-(4\eps(1-2K)+c)T}\E\left[\Acal_{1-2K}(x_{n},x_{n})\right],\label{eq:a_priori}
\end{equation}
where $c\in\mathbb{R}$ is as in \eqref{eq:WDC}.
\end{thm}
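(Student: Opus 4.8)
The plan is to apply the Itô formula for the Gelfand triple $S \hookrightarrow H_\beta \hookrightarrow S^\ast$ from \cite{PR07-2} to the approximating process $X^m$ solving \eqref{eq:approx-eq-B1B2B3}, using the renormed inner product $\Acal_{1-2K}^{(\beta)}$ rather than the standard $H$-norm. This produces a gradient-type identity whose ``energy'' part is (after dividing by $\beta$ and letting $\beta\to\infty$) essentially $\Acal_{1-2K}(X_t,X_t)$. The drift term pairs $A^{\lambda,\delta,\eps}(X_s^m)$ against $\tilde L^{(\beta)}X_s^m$ in the $H$-inner product, which by the definition of $\Acal^{(\beta)}_{1-2K}$ is the same as testing against $X_s^m$ in the $\Acal_{1-2K}^{(\beta)}$-form. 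I would then handle the three pieces of $A^{\lambda,\delta,\eps}$ separately.

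First I would treat the genuinely nonlinear $p$-Laplace-type term $-\div(a^\ast\phi^\lambda(a\nabla x))$: tested against $\tilde L^{(\beta)}x = (L^a+2K-1)^{(\beta)}x$, one uses that $(L^a+2K-1)^{(\beta)} = \beta(\beta G^a_{\beta-2K+1}-1)$ together with the resolvent contraction property for the nonlinear energy. The key input here is Lemma \ref{lem:resolvent-contraction-lemma} (and Remark \ref{rem:Note-that}(ii) for $\psi^\lambda$), which gives $\tilde\Psi^\lambda(J^0_\delta u)\le\tilde\Psi^\lambda(u)$; combined with a convexity/subdifferential estimate in the spirit of the computation labeled \eqref{eq:wangtrick} referenced in the proof of Proposition \ref{prop:conds}, this shows that the nonlinear term contributes a non-positive quantity. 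Second, the viscosity term $-\eps L^a x$ tested against $\tilde L^{(\beta)}x$ yields, after passing $\beta\to\infty$, the good terms $4\eps(1-2K)\Acal_{1-2K}(X_t,X_t)$ and $2\eps\|L^aX_t\|_H^2$ — these are precisely the coercivity gains appearing on both sides of \eqref{eq:a_priori}. Third, the noise-correction term $-\tfrac12 J^a_\delta L^b J^a_\delta x$ is controlled by Corollary \ref{cor:forA3}: $\Acal^{(\beta)}_{1-2K}(x, J^a_\delta L^b J^a_\delta x)\le(-c\vee 0)\Acal(x,x)\le(-c\vee0)\Acal_{1-2K}(x,x)$.

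For the stochastic correction I would compute $\|B^{\delta,m}(X_s^m)\|^2_{L_2(U,H_\beta)}$, which by definition of $B^\delta$ in \eqref{eq:Def-B} and of the $\beta$-form is comparable to $\Bcal_\alpha^{(\beta)}(J^a_\delta X_s^m, J^a_\delta X_s^m)$, i.e. $\sum_i \Acal^{(\beta)}_{1-2K}(\langle b_i,\nabla J^a_\delta X^m_s\rangle,\langle b_i,\nabla J^a_\delta X^m_s\rangle)$-type terms. Pairing this Itô-correction against the drift term $\Acal^{(\beta)}_{1-2K}(J^a_\delta L^b J^a_\delta X^m_s, X^m_s)$ is exactly the combination bounded in Lemma \ref{lem:commutator-estimate} by $-c\,\Acal(J^a_\delta X^m_s,J^a_\delta X^m_s)$; this is the heart of why the defective commutator condition \eqref{eq:WDC} is needed, and is the step I expect to be the main obstacle, since one must carefully track the $\beta$-dependence so that the cross term and the Itô term organize into the commutator bracket without leaving residual terms that blow up as $\beta\to\infty$. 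After assembling all pieces one arrives, for each $m$, at a differential inequality of the form $\tfrac{d}{dt}\E\|X^m_t\|_\beta^2 \le -(4\eps(1-2K)+c)\E\|X^m_t\|_\beta^2 - 2\eps\E\|L^aX^m_t\|_H^2 + (\text{terms}\to 0)$, roughly speaking.

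Finally I would pass to the limit: first $m\to\infty$ using $X^m\to X$ in $L^2(\Omega;C([0,T];H))$ from \cite[Theorem 4.6]{GessToelle14}, the weak convergence $L^bJ^a_\delta X^m\rightharpoonup L^bJ^a_\delta X$ (cf. Lemma \ref{lem:comm-implies-M}), lower semicontinuity of $\Acal$, and the convergence $B^{\delta,m}\to B^\delta$ from hypothesis (iii); then $\beta\to\infty$ so that $\Acal^{(\beta)}_{1-2K}\to\Acal_{1-2K}$ on $S$ (see \cite[Chapter I, p. 20]{MR}) — using monotone convergence of the approximate forms — to upgrade the weak-form estimate to the genuine $\Acal_{1-2K}(X_t,X_t)$ bound. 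Grönwall's inequality applied to the resulting differential inequality then yields the factor $e^{-(4\eps(1-2K)+c)T}$ and the claimed estimate \eqref{eq:a_priori}. A minor technical point to check is that the limit solution $X$ a priori only lies in $L^2([0,T]\times\Omega;S)$, so the $\operatorname{ess\,sup}$ in $t$ must be obtained from the $C([0,T];H)$-continuity together with weak lower semicontinuity, exactly as in the proof of Proposition \ref{prop:H_Bound}.
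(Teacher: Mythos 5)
Your proposal follows essentially the same route as the paper's proof: It\^{o} formula in the renormed Gelfand triple $S\hookrightarrow H_\beta\hookrightarrow S^\ast$ applied to the solution of \eqref{eq:approx-eq-B1B2B3}, estimation of the nonlinear piece via Lemma~\ref{lem:resolvent-contraction-lemma} (the computation \eqref{eq:wangtrick}), treatment of the viscosity piece as a coercive contribution, the crucial pairing of the noise-drift correction $-\tfrac12 J^a_\delta L^b J^a_\delta x$ with the It\^{o} correction $\|B^{\delta,m}\|^2_{L_2(U,H_\beta)}$ via Lemma~\ref{lem:commutator-estimate}, and then the limits $m\to\infty$ (Minty/lower semicontinuity) and $\beta\to\infty$ (Mosco convergence of $\Acal^{(\beta)}_{1-2K}$ to $\Acal_{1-2K}$) followed by Gr\"onwall. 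One small wobble: you first suggest treating the noise-correction drift by Corollary~\ref{cor:forA3} alone, which would leave the It\^{o} correction unpaired; the correct move, which you then make in the next paragraph, is to combine the two via Lemma~\ref{lem:commutator-estimate} so that the defective commutator term absorbs them jointly --- this is exactly what the paper does, and Corollary~\ref{cor:forA3} is in fact only invoked for verifying (A3) in Proposition~\ref{prop:conds}, not in this proof. Also, the reference to Lemma~\ref{lem:comm-implies-M} is not needed at the $m\to\infty$ stage (for fixed $\delta>0$ the operator $J^a_\delta L^b J^a_\delta$ is bounded on $H$, so strong $H$-convergence suffices); that lemma is used later in the $\delta\to0$ passage. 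These are cosmetic points and the substance of your argument matches the paper.
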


\begin{proof}
We shall apply the It\^{o} formula \cite[Theorem 4.2.5]{PR07-2} for
the Gelfand triple $S\subset H_{\beta}\subset S^{\ast}$ and the process
\eqref{eq:approx-eq-B1B2B3}. As in the proof of Proposition \ref{prop:H_Bound},
we get for $t\in[0,T]$, after taking the expected value,
\[
\begin{aligned} & \mathbb{E}\|X_{t}^{m}\|_{\beta}^{2}-\mathbb{\E}\|x_{n}\|_{\beta}^{2}\\
\le & \mathbb{E}\int_{0}^{t}\left[-2\brrr{A^{\lambda,\delta,\varepsilon}(X_{s}^{m}),\tilde{L}{}^{(1/\beta)}X_{s}^{m}}+\left(C\|X_{s}^{m}-X_{s}\|_{\beta}+\|B^{\delta,m}(X_{s})\|_{L_{2}(U,H_{\beta})}\right)^{2}\right]\,ds,
\end{aligned}
\]
where $X$ is as in \eqref{eq:approx-eq-B1B2} and $C$ does not depend
on $m$. Also, note that due to the renorming of $S$, we set here
$\tilde{L}:=L^{a}+2K-1$ and
\[
\tilde{L}^{(\mu)}:=(L^{a}+2K-1)^{(\mu)}=\frac{1}{\mu}((1-\mu(L^{a}+2K-1))^{-1}-1).
\]
Now, we argue essentially as in the proof of Proposition \ref{prop:H_Bound}
and obtain that
\[
\begin{aligned} & \mathbb{E}\|X_{t}\|_{\beta}^{2}-\mathbb{\E}\|x_{n}\|_{\beta}^{2}\\
\le & \mathbb{E}\int_{0}^{t}\left[-2\brrr{A^{\lambda,\delta,\varepsilon}(X_{s}),\tilde{L}{}^{(1/\beta)}X_{s}}+\|B^{\delta}(X_{s})\|_{L_{2}(U,H_{\beta})}^{2}\right]\,ds,
\end{aligned}
\]
where we have used the commutation of $\tilde{L}{}^{(1/\beta)}$ and
$L^{a}+2K-1$ in $H$. Note that by Lemma \ref{lem:resolvent-contraction-lemma}
and Remark \ref{rem:Note-that} (ii), denoting $\tilde{J}_{\delta}:=(1-\delta(L^{a}+2K-1))^{-1}$,
for every $y\in S$,
\begin{equation}
\begin{aligned} & \brrr{\operatorname{div}(a^{\ast}\phi^{\lambda}(a\nabla y)),\tilde{L}{}^{(1/\beta)}y}\\
= & \brrr{\operatorname{div}(a^{\ast}\phi^{\lambda}(a\nabla y)),\beta\tilde{J}_{1/\beta}y-\beta y)_{H}}\\
\le & \beta\left[\int_{\T^{d}}\psi_{\lambda}(a\nabla\tilde{J}_{1/\beta}y)\,d\xi-\int_{\T^{d}}\psi_{\lambda}(a\nabla y)\,d\xi\right]\\
\le & 0.
\end{aligned}
\label{eq:wangtrick}
\end{equation}
An application of Lemma \ref{lem:commutator-estimate} shows that
there exists $c\le0$ such that
\[
\begin{aligned} & \mathbb{E}\|X_{t}\|_{\beta}^{2}-\mathbb{\E}\|x_{n}\|_{\beta}^{2}\\
\le & -2\varepsilon\mathbb{E}\int_{0}^{t}\brrr{\tilde{L}X_{s},\tilde{L}^{(1/\beta)}X_{s}}\,ds-c\E\int_{0}^{t}\Acal(J_{\delta}^{a}X_{s},J_{\delta}^{a}X_{s})\,ds.
\end{aligned}
\]
Let $\beta\to\infty$,
\[
\begin{aligned} & \mathbb{E}\left[\Acal_{1-2K}(X_{t},X_{t})\right]-\mathbb{\E}\left[\Acal_{1-2K}(x_{n},x_{n})\right]\\
\le & -2\varepsilon\mathbb{E}\int_{0}^{t}\|L^{a}X_{s}\|^{2}\,ds+4\eps\mathbb{E}\int_{0}^{t}\Acal_{1-2K}(X_{s},X_{s})\,ds-c\E\int_{0}^{t}\Acal_{1-2K}(X_{s},X_{s})\,ds,
\end{aligned}
\]
where we use weak convergence, weak lower semi-continuity and the
Mosco convergence\footnote{See \cite{A} for the notion of \emph{Mosco convergence}.}
of $\|\cdot\|_{\beta}^{2}\overset{M}{\longrightarrow}\Acal_{1-2K}(\cdot,\cdot)$.
Compare also \cite[Chapter I, Lemma 2.12]{MR}. The claim follows
by resolvent contraction in $S$ and Gronwall's lemma.
\end{proof}

\subsection{Passage to the limit}

Let us continue with the proof of the main result Theorem \ref{thm:mainmainthm}.
We shall construct an approximation in several steps, such that the
limit is a time-continuous process which is an SVI-solution to \eqref{eq:main-spde-1}.

\subsubsection*{Singular limit $\lambda\to0$. }

Fix $\delta>0$, $\eps>0$, $n\in\N$. Let $X^{\lambda,m}=X^{m}$
be the unique continuous solution to \eqref{eq:approx-eq-B1B2B3}.
For two solutions $X^{\lambda_{1},m}$, $X^{\lambda_{2},m}$, $\lambda_{1},\lambda_{2}>0$
with initial condition $x_{n}\in L^{2}(\Omega,\Fcal_{0},\P;S)$ we
have that
\[
\begin{aligned}\E\|X_{t}^{\lambda_{1},m}-X_{t}^{\lambda_{2},m}\|_{H}^{2}= & -2\E\int_{0}^{t}\brrr{A^{\lambda_{1},\delta,\varepsilon}(X_{s}^{\lambda_{1},m})-A^{\lambda_{2},\delta,\varepsilon}(X_{s}^{\lambda_{2},m}),X_{s}^{\lambda_{1},m}-X_{s}^{\lambda_{2},m}}\,ds\\
 & +\E\int_{0}^{t}\|B^{\delta,m}(X_{s}^{\lambda_{1},m}-X_{s}^{\lambda_{2},m})\|_{L_{2}(U,H)}^{2}\,ds.
\end{aligned}
\]
Note that, by monotonicity, the drift term gives a non-positive contribution.
Let $m\to\infty$ and use monotonicity, Minty's trick and Fatou's
lemma as in the proof of Proposition \ref{prop:H_Bound} to get that
\[
\begin{aligned}\E\|X_{t}^{\lambda_{1}}-X_{t}^{\lambda_{2}}\|_{H}^{2}= & -2\E\int_{0}^{t}\brrr{A^{\lambda_{1},\delta,\varepsilon}(X_{s}^{\lambda_{1}})-A^{\lambda_{2},\delta,\varepsilon}(X_{s}^{\lambda_{2}}),X_{s}^{\lambda_{1}}-X_{s}^{\lambda_{2}}}\,ds\\
 & +\E\int_{0}^{t}\|B^{\delta}(X_{s}^{\lambda_{1}}-X_{s}^{\lambda_{2}})\|_{L_{2}(U,H)}^{2}\,ds.
\end{aligned}
\]
Note that by (N) and \cite[equation (A.6) in the appendix]{GessToelle15},
we have that
\[
\langle\phi^{\lambda_{1}}(z_{1})-\phi^{\lambda_{2}}(z_{2}),z_{1}-z_{2}\rangle\ge-C(\lambda_{1}+\lambda_{2})(1+|z_{1}|^{2}+|z_{2}|^{2})\quad\forall z_{1},z_{2}\in\R^{d}.
\]
We get that
\[
\begin{aligned}\E\|X_{t}^{\lambda_{1}}-X_{t}^{\lambda_{2}}\|_{H}^{2}\le & C(\lambda_{1}+\lambda_{2})\E\int_{0}^{t}(1+\Acal(X_{s}^{\lambda_{1}},X_{s}^{\lambda_{1}})+\Acal(X_{s}^{\lambda_{2}},X_{s}^{\lambda_{2}}))\,ds.\end{aligned}
\]
Hence by \eqref{eq:energy-bound} and \eqref{eq:a_priori},
\[
\operatorname{ess\;sup}\displaylimits_{t\in[0,T]}\E\|X_{t}^{\lambda_{1}}-X_{t}^{\lambda_{2}}\|_{H}^{2}\le C(T)(\lambda_{1}+\lambda_{2})\left(\E\left[\Acal_{1-2K}(x_{n},x_{n})\right]+1\right).
\]
Hence there exists an $\{\Fcal_{t}\}$-adapted process $X\in C([0,T];L^{2}(\Omega;H))$
with $X_{0}=x_{n}$ such that $X^{\lambda}\to X$ strongly in $L^{\infty}([0,T];L^{2}(\Omega;H))$
as $\lambda\to0$.

\subsubsection*{Spatial rough limit $\delta\to0$.}

Fix $\lambda\ge0,\eps>0,n\in\N$, with $\psi^{0}:=\psi$ and $\phi^{0}:=\phi$.
Let $X^{\delta,m}=X^{m}$ be the unique continuous solution to \eqref{eq:approx-eq-B1B2B3}.
For two solutions $X^{\delta_{1},m}$, $X^{\delta_{2},m}$, $\delta_{1},\delta_{2}>0$
with initial condition $x_{n}\in L^{2}(\Omega,\Fcal_{0},\P;S)$ we
have that
\[
\begin{aligned}\E\|X_{t}^{\delta_{1},m}-X_{t}^{\delta_{2},m}\|_{H}^{2}= & -2\E\int_{0}^{t}\brrr{\eta_{s}^{\lambda,\delta_{1},\eps,m}-\eta_{s}^{\lambda,\delta_{2},\eps,m},X_{s}^{\delta_{1},m}-X_{s}^{\delta_{2},m}}\,ds\\
 & +\E\int_{0}^{t}\|B^{\delta_{1},m}(X_{s}^{\delta_{1},m})-B^{\delta_{2},m}(X_{s}^{\delta_{2},m})\|_{L_{2}(U,H)}^{2}\,ds,
\end{aligned}
\]
for some process $\eta^{\lambda,\delta,\eps,m}\in A^{\lambda,\delta,\varepsilon}(X^{\delta,m})$
$\P\otimes ds$-a.e., where the multivalued situation can only occur
when $\lambda=0$. Note that, again by monotonicity, the drift term
gives a non-positive contribution. Hence we let $m\to\infty$ and
use Fatou's lemma and Minty's trick as in the proof of Proposition
\ref{prop:H_Bound} to get that
\[
\begin{aligned} & \E\|X_{t}^{\delta_{1}}-X_{t}^{\delta_{2}}\|_{H}^{2}\\
\le & -\E\int_{0}^{t}\brrr{J_{\delta_{1}}^{a}L^{b}J_{\delta_{1}}^{a}X_{s}^{\delta_{1}}-J_{\delta_{2}}^{a}L^{b}J_{\delta_{2}}^{a}X_{s}^{\delta_{2}},X_{s}^{\delta_{1}}-X_{s}^{\delta_{2}}}\,ds\\
 & +\E\int_{0}^{t}\|B^{\delta_{1}}(X_{s}^{\delta_{1}})-B^{\delta_{2}}(X_{s}^{\delta_{2}})\|_{L_{2}(U,H)}^{2}\,ds\\
= & \E\int_{0}^{t}\left[\Bcal(J_{\delta_{1}}^{a}X_{s}^{\delta_{1}},J_{\delta_{1}}^{a}X_{s}^{\delta_{2}})+\Bcal(J_{\delta_{2}}^{a}X_{s}^{\delta_{1}},J_{\delta_{2}}^{a}X_{s}^{\delta_{2}})\right]\,ds\\
 & -2\E\int_{0}^{t}\Bcal(J_{\delta_{1}}^{a}X_{s}^{\delta_{1}},J_{\delta_{2}}^{a}X_{s}^{\delta_{2}})\,ds\\
\le & \|b\|_{\infty}^{2}\E\int_{0}^{t}\|J_{\delta_{1}}^{a}X_{s}^{\delta_{1}}\|_{S}\|(J_{\delta_{1}}^{a}-J_{\delta_{2}}^{a})X_{s}^{\delta_{1}}\|_{S}+\|J_{\delta_{2}}^{a}X_{s}^{\delta_{2}}\|_{S}\|(J_{\delta_{1}}^{a}-J_{\delta_{2}}^{a})X_{s}^{\delta_{2}}\|_{S}\,ds\\
\le & \|b\|_{\infty}^{2}\kappa^{-1}\E\int_{0}^{t}\bigg[\Acal_{1-2K}(X_{s}^{\delta_{1}},X_{s}^{\delta_{1}})^{1/2}\Acal_{1-2K}((J_{\delta_{1}}^{a}-J_{\delta_{2}}^{a})X_{s}^{\delta_{1}},(J_{\delta_{1}}^{a}-J_{\delta_{2}}^{a})X_{s}^{\delta_{1}})^{1/2}\\
 & \qquad\qquad\qquad\qquad+\Acal_{1-2K}(X_{s}^{\delta_{2}},X_{s}^{\delta_{2}})^{1/2}\Acal_{1-2K}((J_{\delta_{1}}^{a}-J_{\delta_{2}}^{a})X_{s}^{\delta_{2}},(J_{\delta_{1}}^{a}-J_{\delta_{2}}^{a})X_{s}^{\delta_{2}})^{1/2}\bigg]\,ds.
\end{aligned}
\]
Note that by the resolvent equation $J_{\delta_{1}}^{a}-J_{\delta_{2}}^{a}=(\delta_{1}-\delta_{2})J_{\delta_{1}}^{a}J_{\delta_{2}}^{a}L^{a}$,
and thus
\[
\begin{aligned} & \Acal((J_{\delta_{1}}^{a}-J_{\delta_{2}}^{a})u,(J_{\delta_{1}}^{a}-J_{\delta_{2}}^{a})u)^{1/2}\\
\le & \left[\|L^{a}J_{\delta_{1}}^{a}u\|_{H}+\|L^{a}J_{\delta_{2}}^{a}u\|_{H}\right]^{1/2}\|(J_{\delta_{1}}^{a}-J_{\delta_{2}}^{a})u\|_{H}^{1/2}\\
\le & |\delta_{1}-\delta_{2}|^{1/2}\left(\|L^{a}J_{\delta_{1}}^{a}u\|_{H}^{1/2}+\|L^{a}J_{\delta_{2}}^{a}u\|_{H}^{1/2}\right)\|L^{a}J_{\delta_{1}}^{a}J_{\delta_{2}}^{a}u\|_{H}^{1/2}
\end{aligned}
\]
According to \eqref{eq:energy-bound} and \eqref{eq:a_priori}, we
thus get an estimate
\[
\begin{aligned}\operatorname{ess\;sup}\displaylimits_{t\in[0,T]}\E\|X_{t}^{\delta_{1}}-X_{t}^{\delta_{2}}\|_{H}^{2}\le & \frac{1}{\eps}|\delta_{1}-\delta_{2}|^{1/2}C(T)\left[\E[\Acal_{1-2K}(x_{n},x_{n})]+\E\|x_{n}\|_{H}^{2}\right].\end{aligned}
\]
Thus, as $\eps>0$ is fixed, there exists an $\{\Fcal_{t}\}$-adapted
process $X=X^{\lambda,\eps,n}\in C([0,T];L^{2}(\Omega;H))$ with $X_{0}=x_{n}$
such that $X^{\delta}\to X$ strongly in $L^{\infty}([0,T];L^{2}(\Omega;H))$
as $\delta\to0$.

\subsubsection*{Vanishing viscosity limit $\eps\to0$.}

In a similar manner as before, for fixed $\lambda>0,$ $\delta>0$,
$m\in\N$, we get for two solutions $X^{\eps_{1}}$, $X^{\eps_{2}}$,
$\eps_{1},\eps_{2}>0$ with initial conditions $x_{1},x_{2}\in L^{2}(\Omega,\Fcal_{0},\P;S)$
by monotonicity arguments (after passing to $m\to\infty)$ that for
$t\in[0,T]$,
\[
\begin{aligned} & \E\|X_{t}^{\eps_{1}}-X_{t}^{\eps_{2}}\|_{H}^{2}\\
\le & \E\|x_{1}-x_{2}\|_{H}^{2}+C(\eps_{1}+\eps_{2})\E\int_{0}^{t}(\Acal(X_{s}^{\eps_{1}},X_{s}^{\eps_{1}})+\Acal(X_{s}^{\eps_{2}},X_{s}^{\eps_{2}}))\,ds.
\end{aligned}
\]
Bounding the r.h.s. by \eqref{eq:a_priori} and then taking the limits
$\lambda\to0$, $\delta\to0$, we get by the previous convergence
steps that
\begin{equation}
\begin{aligned} & \operatorname{ess\;sup}\displaylimits_{t\in[0,T]}\E\|X_{t}^{\eps_{1}}-X_{t}^{\eps_{2}}\|_{H}^{2}\\
\le & \E\|x_{1}-x_{2}\|_{H}^{2}+C(T)(\eps_{1}+\eps_{2})\left(\E\left[\Acal_{1-2K}(x_{1},x_{1})\right]+\E\left[\Acal_{1-2K}(x_{2},x_{2})\right]+1\right)
\end{aligned}
.\label{eq:eps-eps}
\end{equation}
Hence for the initial condition $x_{n}\in L^{2}(\Omega,\Fcal_{0},\P;S)$
there exists an $\{\Fcal_{t}\}$-adapted process $X\in C([0,T];L^{2}(\Omega;H))$
with $X_{0}=x_{n}$ such that $X^{\eps}\to X$ strongly in $L^{\infty}([0,T];L^{2}(\Omega;H))$
as $\eps\to0$.

\subsubsection*{Approximating the initial condition $n\to\infty$.}

Let $X^{\eps,n}$ and $X^{\eps,k}$ be the limits as above in the
``vanishing viscosity limit'' with initial conditions $x_{n},x_{k}\in L^{2}(\Omega,\Fcal_{0},\P;S)$
respectively. Taking the limit $\eps\to0$ in \eqref{eq:eps-eps},
yields
\[
\operatorname{ess\;sup}\displaylimits_{t\in[0,T]}\E\|X_{t}^{n}-X_{t}^{k}\|_{H}^{2}\le\E\|x_{n}-x_{k}\|_{H}^{2}.
\]
Hence for every initial condition $x\in L^{2}(\Omega,\Fcal_{0},\P;H)$
there exists an $\{\Fcal_{t}\}$-adapted process $X\in C([0,T];L^{2}(\Omega;H))$
with $X_{0}=x$ such that $X^{n}\to X$ strongly in $L^{\infty}([0,T];L^{2}(\Omega;H))$
as $x_{n}\to x$ and $n\to\infty$. 

\subsection{Existence of solutions}

Let $(Z_{0},Z,G,P)$ be some choice of admissible test-elements as
in Definition \ref{def:svi-new} and let $X=X^{m,n,\lambda,\delta,\eps}$
be the unique continuous solution to \eqref{eq:approx-eq-B1B2B3}
with initial condition $x_{n}\in L^{2}(\Omega,\Fcal_{0},\P;S)$. By
It\^{o}'s formula (compare with Lemma \ref{lem:stronglemma} and \cite[Step 6 of the proof of Theorem 3.1]{GessToelle15}),
\[
\begin{aligned} & \E\|X_{t}-Z_{t}\|_{H}^{2}\\
= & \E\|x_{n}-Z_{0}\|_{H}^{2}+2\E\int_{0}^{t}(\div(a^{\ast}\phi^{\lambda}(a\nabla X_{s}))+\eps L^{a}X_{s}-G_{s},X_{s}-Z_{s})_{H}\,ds\\
 & +\E\int_{0}^{t}\|B^{\delta,m}(X_{s})-B(PZ_{s})\|_{L_{2}(U,H)}^{2}\,ds+\E\int_{0}^{t}(J_{\delta}^{a}L^{b}J_{\delta}^{a}X_{s}-P^{\ast}L^{b}PZ_{s},X_{s}-Z_{s})_{H}\,ds.
\end{aligned}
\]
Recall that by Lemma \ref{lem:(N)-lemma},
\[
|\psi(\zeta)-\psi^{\lambda}(\zeta)|\le C\lambda(1+\psi(\zeta))\quad\forall\zeta\in\R^{d}.
\]
We have by integration by parts and the subgradient property of $\phi^{\lambda}$
that $\P\otimes ds$-a.e.,
\[
\begin{aligned} & (\div(a^{\ast}\phi^{\lambda}(a\nabla X)),X-Z)_{H}\\
= & (\phi^{\lambda}(a\nabla X),a\nabla(Z-X))_{H}\\
\le & \int_{\T^{d}}\psi^{\lambda}(a\nabla Z)\,d\xi-\int_{\T^{d}}\psi^{\lambda}(a\nabla X)\,d\xi\\
\le & \int_{\T^{d}}\psi(a\nabla Z)\,d\xi+\int_{\T^{d}}[\psi(a\nabla X)-\psi^{\lambda}(a\nabla X)]\,d\xi-\int_{\T^{d}}\psi(a\nabla X)\,d\xi\\
\le & \int_{\T^{d}}\psi(a\nabla Z)\,d\xi+C\lambda\int_{\T^{d}}(1+\psi(a\nabla X))\,d\xi-\int_{\T^{d}}\psi(a\nabla X)\,d\xi,
\end{aligned}
\]
recalling that $\psi^{\lambda}\le\psi$ for every $\lambda>0$. Now,
Young inequality implies that,
\[
2(\eps^{2/3}L^{a}X,\eps^{1/3}(X-Z))_{H}\le\eps^{4/3}\|L^{a}X\|_{H}^{2}+\eps^{2/3}\|X-Z\|_{H}^{2}.
\]
Hence,
\[
\begin{aligned} & \E\|X_{t}-Z_{t}\|_{H}^{2}+2\E\int_{0}^{t}\int_{\T^{d}}\psi(a\nabla X_{s})\,d\xi ds\\
\le & \E\|x_{n}-Z_{0}\|_{H}^{2}+2\E\int_{0}^{t}\int_{\T^{d}}\psi(a\nabla Z_{s})\,d\xi ds+2C\lambda\E\int_{0}^{t}\int_{\T^{d}}(1+\psi(a\nabla X_{s}))\,d\xi ds\\
 & -2\E\int_{0}^{t}(G_{s},X_{s}-Z_{s})_{H}\,ds\\
 & +2\E\int_{0}^{t}\left(\eps^{4/3}\|L^{a}X_{s}\|_{H}^{2}+\eps^{2/3}\|X_{s}-Z_{s}\|_{H}^{2}\right)\,ds\\
 & +\E\int_{0}^{t}\|B^{\delta,m}(X_{s})-B(PZ_{s})\|_{L_{2}(U,H)}^{2}\,ds+\E\int_{0}^{t}(J_{\delta}^{a}L^{b}J_{\delta}^{a}X_{s}-P^{\ast}L^{b}PZ_{s},X_{s}-Z_{s})_{H}\,ds.
\end{aligned}
\]
Now, we can pass to the limit $m\to\infty$ by convergence and lower
semi-continuity on the l.h.s., whereas on the r.h.s., we shall use
Lebesgue's dominated convergence theorem and the energy bound of Theorem
\ref{thm:main-bound}, that is, inequality \eqref{eq:a_priori}. We
get that
\[
\begin{aligned} & \E\|X_{t}-Z_{t}\|_{H}^{2}+2\E\int_{0}^{t}\int_{\T^{d}}\psi(a\nabla X_{s})\,d\xi ds\\
\le & \E\|x_{n}-Z_{0}\|_{H}^{2}+2\E\int_{0}^{t}\int_{\T^{d}}\psi(a\nabla Z_{s})\,d\xi ds+2C\lambda\E\int_{0}^{t}\int_{\T^{d}}(1+\psi(a\nabla X_{s}))\,d\xi ds\\
 & -2\E\int_{0}^{t}(G_{s},X_{s}-Z_{s})_{H}\,ds\\
 & +2\E\int_{0}^{t}\left(\eps^{4/3}\|L^{a}X_{s}\|_{H}^{2}+\eps^{2/3}\|X_{s}-Z_{s}\|_{H}^{2}\right)\,ds\\
 & +\E\int_{0}^{t}\|B(J_{\delta}^{a}X_{s}-PZ_{s})\|_{L_{2}(U,H)}^{2}\,ds+\E\int_{0}^{t}(J_{\delta}^{a}L^{b}J_{\delta}^{a}X_{s}-P^{\ast}L^{b}PZ_{s},X_{s}-Z_{s})_{H}\,ds.
\end{aligned}
\]
Reordering terms, we get after some cancellation,
\[
\begin{aligned} & \E\|X_{t}-Z_{t}\|_{H}^{2}+2\E\int_{0}^{t}\int_{\T^{d}}\psi(a\nabla X_{s})\,d\xi ds\\
\le & \E\|x_{n}-Z_{0}\|_{H}^{2}+2\E\int_{0}^{t}\int_{\T^{d}}\psi(a\nabla Z_{s})\,d\xi ds+2C\lambda\E\int_{0}^{t}\int_{\T^{d}}(1+\psi(a\nabla X_{s}))\,d\xi ds\\
 & -2\E\int_{0}^{t}(G_{s},X_{s}-Z_{s})_{H}\,ds\\
 & +2\E\int_{0}^{t}\left(\eps^{4/3}\|L^{a}X_{s}\|_{H}^{2}+\eps^{2/3}\|X_{s}-Z_{s}\|_{H}^{2}\right)\,ds\\
 & -\E\int_{0}^{t}(L^{b}PZ_{s},PX_{s}-J_{\delta}^{a}X_{s})_{H}\,ds-\E\int_{0}^{t}(J_{\delta}^{a}X_{s},L^{b}(J_{\delta}^{a}Z_{s}-PZ_{s}))_{H}\,ds.
\end{aligned}
\]
Now, first let $\lambda\to0$ and then $\delta\to0$, where we use
the bound \eqref{eq:a_priori} and Lemma \ref{lem:comm-implies-M},
taking into account that $Z\in L^{2}([0,T]\times\Omega;D(L^{b})\cap S)$
and that $X^{\delta}\to X$ converges strongly in $L^{\infty}([0,T];L^{2}(\Omega;H))$,
noting that $\Bcal(u,u)\le\|b\|_{\infty}^{2}\kappa^{-1}\Acal(u,u)$,
$u\in S$, and thus Lebesgue's dominated convergence theorem can be
applied in order to get that
\[
\begin{aligned} & \E\|X_{t}-Z_{t}\|_{H}^{2}+2\E\int_{0}^{t}\int_{\T^{d}}\psi(a\nabla X_{s})\,d\xi ds\\
\le & \E\|x_{n}-Z_{0}\|_{H}^{2}+2\E\int_{0}^{t}\int_{\T^{d}}\psi(a\nabla Z_{s})\,d\xi ds\\
 & -2\E\int_{0}^{t}(G_{s},X_{s}-Z_{s})_{H}\,ds\\
 & +2\E\int_{0}^{t}\left(\eps^{4/3}\|L^{a}X_{s}\|_{H}^{2}+\eps^{2/3}\|X_{s}-Z_{s}\|_{H}^{2}\right)\,ds\\
 & -\E\int_{0}^{t}(L^{b}PZ_{s},PX_{s}-X_{s})_{H}\,ds-\E\int_{0}^{t}(X_{s},L^{b}(Z_{s}-PZ_{s}))_{H}\,ds.
\end{aligned}
\]
Finally, we can let $\eps\to0$ and use the bounds \eqref{eq:energy-bound}
and \eqref{eq:a_priori}, so that we get together with the lower semi-continuity
of $\Psi$ that
\[
\begin{aligned} & \E\|X_{t}-Z_{t}\|_{H}^{2}+2\E\int_{0}^{t}\Psi(X_{s})ds\\
\le & \E\|x_{n}-Z_{0}\|_{H}^{2}+2\E\int_{0}^{t}\int_{\T^{d}}\psi(a\nabla Z_{s})\,d\xi ds\\
 & -2\E\int_{0}^{t}(G_{s},X_{s}-Z_{s})_{H}\,ds\\
 & -\E\int_{0}^{t}(L^{b}PZ_{s},PX_{s}-X_{s})_{H}\,ds-\E\int_{0}^{t}(X_{s},L^{b}(Z_{s}-PZ_{s}))_{H}\,ds.
\end{aligned}
\]
Note that, as in particular $Z\in L^{2}([0,T]\times\Omega;S)$,
\[
\int_{\T^{d}}\psi(a\nabla Z)\,d\xi=\Psi(Z).
\]
Passing to $n\to\infty$ and using convergence and lower semi-continuity
and the bound \eqref{eq:energy-bound} again, yields the existence
of a time-continuous and adapted SVI-solution for equation \eqref{eq:main-spde-1}.
The regularity of SVI solutions \eqref{eq:SVI_regularity-Stratonovich}
follows from passing to the limit in equation \eqref{eq:energy-bound}.
The existence part of Theorem \ref{thm:mainmainthm} is proved.

\subsection{Uniqueness}

Compare with \cite{GessToelle15,Gess:2016kda,Gess:2015gw,Ciotir:2016fe,VBMR}.
Let $X\in L^{2}([0,T]\times\Omega;H)$ be any SVI solution to \eqref{eq:main-spde-1}
with initial datum $x\in L^{2}(\Omega,\Fcal_{0},\P;H)$. Let $Z_{0}=y_{n}$,
$Z=Z^{\lambda,\delta,\eps,n}=X^{\lambda,\delta,\eps,n}$, the strong
(!) solution to \eqref{eq:approx-eq-B1B2} with initial datum $y_{n}\in L^{2}(\Omega,\Fcal_{0},\P;S)$.
Let $G=G^{\lambda,\eps,n}=\operatorname{div}(a^{\ast}\phi^{\lambda}(a\nabla Z))+\varepsilon L^{a}Z$
and $P=P^{\delta}=J_{\delta}^{a}$. Again, we omit the indices, whenever
it seems convenient. By the energy estimates \eqref{eq:energy-bound}
and \eqref{eq:a_priori}, the integrals are finite\footnote{We would like to point out, since $\lambda,\delta,\eps>0$, and $y_{n}\in L^{2}(\Omega,\Fcal_{0},\P;S)$,
the conditions of \cite[Theorem 4.2.5]{PR07-2} are satisfied and
we have a pathwise single-valued solution which satisfies the It\^{o}-equation
\eqref{eq:approx-eq-B1B2} in the Gelfand triple $S\hookrightarrow H\hookrightarrow S^{\ast}$.
However, by our a priori estimates \eqref{eq:energy-bound} and \eqref{eq:a_priori},
we get additional regularity and $(y_{n},X^{\lambda,\delta,\eps,n},G^{\lambda,\eps,n},J_{\delta}^{a})$
is indeed an quadruple of admissible test-elements.}. By the definition of SVI-solutions, we get for $t\in[0,T]$ that
\[
\begin{aligned} & \E\|X_{t}-Z_{t}\|_{H}^{2}+2\E\int_{0}^{t}\Psi(X_{s})ds\\
\le & \E\|x-y_{n}\|_{H}^{2}+2\E\int_{0}^{t}\Psi(Z_{s})ds\\
 & -2\E\int_{0}^{t}(\operatorname{div}(a^{\ast}\phi^{\lambda}(a\nabla Z_{s}))+\varepsilon L^{a}Z_{s},X_{s}-Z_{s})_{H}\,ds\\
 & -\E\int_{0}^{t}(L^{b}J_{\delta}^{a}Z_{s},J_{\delta}^{a}X_{s}-X_{s})_{H}\,ds-\E\int_{0}^{t}(X_{s},L^{b}(Z_{s}-J_{\delta}^{a}Z_{s}))_{H}\,ds.
\end{aligned}
\]
By Lemma \ref{lem:(N)-lemma}, for all $w\in S$, we have 
\[
-(\operatorname{div}(a^{\ast}\phi^{\lambda}(a\nabla Z)),w-Z)_{H}+\Psi(Z)\le\Psi(w)+C\lambda(1+\Psi(Z))\quad\P\otimes ds-\text{a.e}.
\]
Since $\Psi$ is the lower semi-continuous envelope of $\tilde{{\Psi}}=\Psi\vert_{S}$
(i.e., $\Psi$ restricted to $S$), for a.e. $(t,\omega)\in[0,T]\times\Omega$,
we can choose a sequence $w^{k}\in S$, $k\in\N$ such that $w^{k}\to X_{s}(\omega)$
in $H$ and $\Psi(w^{k})\to\Psi(X_{t}(\omega))$.

Hence, 
\[
-(\operatorname{div}(a^{\ast}\phi^{\lambda}(a\nabla Z)),X-Z)_{H}+\Psi(Z)\le\Psi(X)+C\lambda(1+\Psi(Z))\quad\P\otimes ds-\text{a.e}.
\]
Thus,
\[
\begin{aligned} & \E\|X_{t}-Z_{t}\|_{H}^{2}\\
\le & \E\|x-y_{n}\|_{H}^{2}\\
 & +C\lambda\E\int_{0}^{t}(1+\Psi(Z_{s}))\,ds\\
 & +2\E\int_{0}^{t}\left(\eps^{4/3}\|L^{a}Z_{s}\|_{H}^{2}+\eps^{2/3}\|X_{s}-Z_{s}\|_{H}^{2}\right)\,ds\\
 & -\E\int_{0}^{t}(L^{b}J_{\delta}^{a}Z_{s},J_{\delta}^{a}X_{s}-X_{s})_{H}\,ds-\E\int_{0}^{t}(X_{s},L^{b}(Z_{s}-J_{\delta}^{a}Z_{s}))_{H}\,ds.
\end{aligned}
\]
We can take the limit $\lambda\to0$ by using the bound \eqref{eq:a_priori},
which is uniform in $\lambda$ and $\delta$, and get that
\[
\begin{aligned} & \E\|X_{t}-Z_{t}^{\delta}\|_{H}^{2}\\
\le & \E\|x-y_{n}\|_{H}^{2}\\
 & +2\E\int_{0}^{t}\left(\eps^{4/3}\|L^{a}Z_{s}^{\delta}\|_{H}^{2}+\eps^{2/3}\|X_{s}-Z_{s}^{\delta}\|_{H}^{2}\right)\,ds\\
 & -\E\int_{0}^{t}(L^{b}J_{\delta}^{a}Z_{s}^{\delta},J_{\delta}^{a}X_{s}-X_{s})_{H}\,ds-\E\int_{0}^{t}(X_{s},L^{b}(Z_{s}^{\delta}-J_{\delta}^{a}Z_{s}^{\delta}))_{H}\,ds.
\end{aligned}
\]
Due to the bound \eqref{eq:a_priori} and Lemma \ref{lem:comm-implies-M},
we can use the $\P\otimes ds$-a.e. strong convergence of $J_{\delta}^{a}X\to X$
in $H$ and Lebesgue's dominated convergence theorem, the bound \eqref{eq:energy-bound},
the strong convergence $Z^{\delta}\to Z$ in $L^{\infty}([0,T];L^{2}(\Omega;H))$
and the weak convergence of $Z^{\delta}\rightharpoonup Z$ in $L^{2}([0,T];L^{2}(\Omega;D(L^{b})\cap S))$
in order to let $\delta\to0$ so that the above expression converges
to
\[
\begin{aligned} & \E\|X_{t}-Z_{t}^{\eps}\|_{H}^{2}\\
\le & \E\|x-y_{n}\|_{H}^{2}\\
 & +2\E\int_{0}^{t}\left(\eps^{4/3}\|L^{a}Z_{s}^{\eps}\|_{H}^{2}+\eps^{2/3}\|X_{s}-Z_{s}^{\eps}\|_{H}^{2}\right)\,ds.
\end{aligned}
\]
Now, for $\eps\to0$, using the bound \eqref{eq:a_priori} that the
expression converges to
\[
\begin{aligned}\E\|X_{t}-Z_{t}^{n}\|_{H}^{2}\le & \E\|x-y_{n}\|_{H}^{2},\end{aligned}
\]
for a.e. $t\in[0,T]$. The bound \eqref{eq:uniqueness-bound} follows
by approximating initial data $y\in L^{2}(\Omega,\Fcal_{0},\P;H)$
by $y_{n}\to y$, i.e., a strongly convergent sequence in $L^{2}(\Omega,\Fcal_{0},\P;H)$
as $n\to\infty$, and using lower semi-continuity. The uniqueness
part of Theorem \ref{thm:mainmainthm} is proved.

\appendix

\section{\label{sec:app-ex} Existence of approximating solutions}

Let us recall the following conditions from \cite{GessToelle14},
simplified with regard to the time-dependence of the drift coefficients,
which is not needed here. Suppose that $A:S\to2^{S^{\ast}}$ satisfies
the following conditions: There is a constant $C>0$ such that
\begin{enumerate}
\item[(A1)]  The map $x\mapsto A(x)$ is maximal monotone with non-empty values.
\item[(A2)]  For all $x\in S$, for all $y\in A(x)$:
\[
\|y\|_{S^{\ast}}\le C\|x\|_{S}.
\]
\item[(A3)]  For all $x\in S$, for all $y\in A(x)$, and for all $\mu>0$:
\[
2_{S^{\ast}}\langle y,L^{(\mu)}x\rangle_{S}\le C\|x\|_{S}^{2},
\]
such that $C$ is independent of $\mu$, where $L^{(\mu)}:=\frac{1}{\mu}((1-\mu L)^{-1}-1)$
denotes the Yosida-approximation of $L=\Delta$. 
\end{enumerate}
Let $U$ be a separable Hilbert space. Denote the \emph{space of Hilbert-Schmidt
operators} from $U$ to $H$ by $L_{2}(U,H)$. Suppose that $B:[0,T]\times\Omega\times S\to L_{2}(U,H)$
is \emph{progressively measurable}\footnote{That is, for every $t\in[0,T]$ the map $B:[0,t]\times\Omega\times S\to L_{2}(U,H)$
is $\mathcal{B}([0,t])\otimes\mathcal{F}_{t}\otimes\mathcal{B}(S)$-measurable.} and that there exist constants $C_{1},C_{2},C_{3}>0$ such that
\begin{enumerate}
\item[(B1)]  There is $h\in L^{1}([0,T]\times\Omega)$ such that
\[
\|B_{t}(x)\|_{L_{2}(U,H)}^{2}\le C_{1}\|x\|_{S}^{2}+h_{t}
\]
for all $t\in[0,T]$, $x\in S$ and $\omega\in\Omega$.
\item[(B2)]  
\[
\|B_{t}(x)-B_{t}(y)\|_{L_{2}(U,H)}^{2}\le C_{2}\|x-y\|_{H}^{2}
\]
for all $t\in[0,T]$, $x,y\in S$ and $\omega\in\Omega$.
\item[(B3)]  There is $\tilde{h}\in L^{1}([0,T]\times\Omega)$ such that
\[
\|B_{t}(x)\|_{L_{2}(U,S)}^{2}\le C_{3}\|x\|_{S}^{2}+\tilde{h}_{t}
\]
for all $t\in[0,T]$, $x\in S$ and $\omega\in\Omega$.
\end{enumerate}
Denote by $\{W_{t}\}_{t\ge0}$ a cylindrical Wiener process in $U$
for the stochastic basis $(\Omega,\mathcal{F},\{\mathcal{F}_{t}\}_{t\ge0},\mathbb{P})$.

\begin{defn}
\label{def:sln-GT} We say that a continuous $\{\mathcal{F}_{t}\}_{t\ge0}$-adapted
stochastic process $X:[0,T]\times\Omega\to H$ is a \emph{solution
}to
\begin{equation}
dX_{t}+A(X_{t})\,dt\ni B_{t}(X_{t})\,dW_{t},\quad X_{0}=x,\label{eq:Ito-multivalued}
\end{equation}
if $X\in L^{2}(\Omega;C([0,T];H))\cap L^{2}([0,T]\times\Omega;S)$
and solves the following integral equation in $S^{\ast}$
\[
X_{t}=x-\int_{0}^{t}\eta_{s}\,ds+\int_{0}^{t}B_{s}(X_{s})\,dW_{s},
\]
$\mathbb{P}$-a.s. for all $t\in[0,T]$, where $\eta\in A(X)$, $dt\otimes\mathbb{P}$-a.s.
\end{defn}

\begin{thm}
\label{thm:sln-ex-GT} Suppose that conditions (A1)--(A3), (B1)--(B3)
hold. Let $x\in L^{2}(\Omega,\mathcal{F}_{0},\mathbb{P};S)$. Then
there exists a unique solution in the sense of the previous definition
to the equation
\begin{equation}
dX_{t}+A(X_{t})\,dt\ni B_{t}(X_{t})\,dW_{t},\quad X_{0}=x,\label{eq:abstract-Ito-eq-solution}
\end{equation}
that satisfies
\[
\E\left[\sup_{t\in[0,T]}\|X_{t}\|_{S}^{2}\right]<\infty.
\]
\end{thm}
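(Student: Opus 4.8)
The plan is to obtain $X$ as the limit of solutions of equations with Lipschitz coefficients, removing the regularization with the help of $H$- and $S$-a priori bounds that exploit precisely (A1)--(A3) and (B1)--(B3). First I would regularize the drift: since $A$ is maximal monotone, replace it by a Yosida approximation $A^{(\mu)}$, $\mu>0$ (equivalently, when $A=\partial\Phi$, by the gradient of a Moreau-Yosida approximation of $\Phi$), chosen with respect to a duality structure for which the associated resolvent $x\mapsto x_{\mu}$ is an $S$-contraction; then $A^{(\mu)}\colon S\to S^{\ast}$ is single-valued, monotone and Lipschitz, $A^{(\mu)}x\in A(x_{\mu})$, and $\|A^{(\mu)}x\|_{S^{\ast}}\le C\|x_{\mu}\|_{S}\le C\|x\|_{S}$ by (A2). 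The regularized equation $dX^{\mu}+A^{(\mu)}(X^{\mu})\,dt=B_{t}(X^{\mu})\,dW_{t}$, $X^{\mu}_{0}=x$, has coefficients that are Lipschitz in $H$ (for $B$ by (B2), with linear growth by (B1)), so a unique adapted solution $X^{\mu}\in L^{2}(\Omega;C([0,T];H))$ exists by a Banach fixed-point argument; supplementing the drift by an auxiliary viscosity $\nu(1-L)$ if needed, the classical monotone-operator SPDE theory also gives $X^{\mu}\in L^{2}([0,T]\times\Omega;S)$.

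The core is the a priori estimate, uniform in $\mu$ (and in the auxiliary parameters). It\^{o}'s formula for $\|X^{\mu}_{t}\|_{H}^{2}$, monotonicity of $A^{(\mu)}$ and (B1) control $\E\|X^{\mu}_{t}\|_{H}^{2}$ once it is coupled to an $S$-bound, and the $S$-bound is produced by applying the It\^{o} formula in $H$ to the bounded quadratic form $u\mapsto\mathcal{E}^{(\beta)}(u,u)=-(L^{(1/\beta)}u,u)_{H}$ (recall $\mathcal{E}^{(\beta)}(u,u)\uparrow\mathcal{E}(u,u)$ as $\beta\to\infty$). The drift contribution is $2\langle A^{(\mu)}(X^{\mu}_{s}),L^{(1/\beta)}X^{\mu}_{s}\rangle\le C\|X^{\mu}_{s}\|_{S}^{2}$ by (A3) applied at $x_{\mu}$, the leftover commutator $2\langle A^{(\mu)}(X^{\mu}_{s}),L^{(1/\beta)}(X^{\mu}_{s}-x_{\mu})\rangle$ being $\le 0$ since $L^{(1/\beta)}\le 0$ and $X^{\mu}_{s}-x_{\mu}$ is a nonnegative (spectral) multiple of $A^{(\mu)}(X^{\mu}_{s})$; and the It\^{o} correction term equals $\sum_{k}\mathcal{E}^{(\beta)}(B_{s}(X^{\mu}_{s})e_{k},B_{s}(X^{\mu}_{s})e_{k})\le\|B_{s}(X^{\mu}_{s})\|_{L_{2}(U,S)}^{2}\le C_{3}\|X^{\mu}_{s}\|_{S}^{2}+\tilde{h}_{s}$ by (B3) --- which is exactly why (B3) is needed, since the cruder bound by $\beta\,\|B_{s}(X^{\mu}_{s})\|_{L_{2}(U,H)}^{2}$ afforded by (B1) would blow up as $\beta\to\infty$. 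Adding the two balances, bounding $A^{(\mu)}(X^{\mu})$ in $L^{2}([0,T]\times\Omega;S^{\ast})$ by (A2), applying the Burkholder-Davis-Gundy inequality and Gronwall's lemma, and passing $\beta\to\infty$ by monotone convergence of the forms yields
\[
\operatorname{ess\;sup}\displaylimits_{t\in[0,T]}\E\|X^{\mu}_{t}\|_{S}^{2}+\E\int_{0}^{T}\|A^{(\mu)}(X^{\mu}_{s})\|_{S^{\ast}}^{2}\,ds+\E\Big[\sup_{t\in[0,T]}\|X^{\mu}_{t}\|_{S}^{2}\Big]\le C\big(1+\E\|x\|_{S}^{2}\big),
\]
uniformly in $\mu$.

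For the passage to the limit, the uniform bounds give $X^{\mu}\rightharpoonup X$ in $L^{2}([0,T]\times\Omega;S)$ and weak-$\ast$ in $L^{\infty}_{t}L^{2}_{\omega}(S)$, $A^{(\mu)}(X^{\mu})\rightharpoonup\eta$ in $L^{2}([0,T]\times\Omega;S^{\ast})$, and $B(X^{\mu})$ converges weakly in the Hilbert-Schmidt space. To pass to the limit in the stochastic integral I would show that $(X^{\mu})$ is Cauchy in $L^{2}(\Omega;C([0,T];H))$: It\^{o}'s formula for $\|X^{\mu_{1}}_{t}-X^{\mu_{2}}_{t}\|_{H}^{2}$, the standard inequality for differences of Yosida approximations, which bounds $-2\langle A^{(\mu_{1})}(X^{\mu_{1}})-A^{(\mu_{2})}(X^{\mu_{2}}),X^{\mu_{1}}-X^{\mu_{2}}\rangle$ by $(\mu_{1}+\mu_{2})(\|A^{(\mu_{1})}(X^{\mu_{1}})\|_{S^{\ast}}^{2}+\|A^{(\mu_{2})}(X^{\mu_{2}})\|_{S^{\ast}}^{2})$, the uniform $S^{\ast}$-bound just obtained, (B2), BDG and Gronwall give $\operatorname{ess\;sup}_{t}\E\|X^{\mu_{1}}_{t}-X^{\mu_{2}}_{t}\|_{H}^{2}\le C(\mu_{1}+\mu_{2})(1+\E\|x\|_{S}^{2})\to 0$. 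Hence $B(X^{\mu})\to B(X)$ in $L^{2}$, $X$ solves $X_{t}=x-\int_{0}^{t}\eta_{s}\,ds+\int_{0}^{t}B_{s}(X_{s})\,dW_{s}$ in $S^{\ast}$, and $\E[\sup_{t}\|X_{t}\|_{S}^{2}]<\infty$ by weak lower semicontinuity. Finally, since $X^{\mu}-x^{\mu}_{\mu}\to 0$ in $L^{2}([0,T]\times\Omega;S)$ (it equals a multiple of $\mu\,A^{(\mu)}(X^{\mu})$ in the duality) and therefore $x^{\mu}_{\mu}\to X$ in $L^{2}([0,T]\times\Omega;H)$, while $A^{(\mu)}(X^{\mu})\in A(x^{\mu}_{\mu})$ converges weakly to $\eta$, a Minty-type argument based on the limiting energy identity (comparing the It\^{o} expansions of $\|X^{\mu}_{t}\|_{H}^{2}$ and $\|X_{t}\|_{H}^{2}$) identifies $\eta\in A(X)$ $dt\otimes\P$-a.e. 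Uniqueness is immediate from monotonicity of $A$ and (B2) via It\^{o}'s formula and Gronwall.

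The main obstacle is the a priori estimate: with no coercivity of $A$ available, the $H^{1}$-regularity has to be manufactured from (A3) through the quadratic-form It\^{o} identity, with (B3) exactly absorbing the It\^{o} correction, and all the energy balances (including the one for the auxiliary viscosity, if present) must be combined so that the resulting bounds --- in particular $\E[\sup_{t}\|X_{t}\|_{S}^{2}]$ --- are uniform in every regularization parameter. The second delicate point is the identification of the weak limit $\eta$ as a selection of the possibly multivalued graph $A(X)$, carried out simultaneously with the strong $H$-convergence forced by the Yosida-difference estimate.
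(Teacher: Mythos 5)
The paper offers no proof of this theorem: it is quoted from the cited reference \cite{GessToelle14} (Theorem 4.4), and the ``proof'' given in the text is just that citation, so there is nothing internal to compare against. As a free-standing reconstruction your plan is sound and identifies the two mechanisms that make the statement work: condition (A3) exploited through an It\^o identity for the approximate form $\mathcal{E}^{(\beta)}(u,u)=-(L^{(1/\beta)}u,u)_{H}$ to manufacture the $S$-bound (this is exactly the device the present paper uses in the proof of Theorem \ref{thm:main-bound}, applying It\^o in the Gelfand triple $S\subset H_{\beta}\subset S^{\ast}$ and then letting $\beta\to\infty$ by Mosco convergence), and condition (B3) absorbing the quadratic-variation correction so that this bound survives $\beta\to\infty$, which is precisely why (B3) is demanded here but not in the limit-solution Theorem \ref{thm:ex-limit-sln}. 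The closing steps --- Cauchy in $L^{2}(\Omega;C([0,T];H))$ via the Yosida-difference inequality, (A2) to bound the drift in $L^{2}(S^{\ast})$, weak limits, BDG for the $\sup$ estimate, and a Minty argument to identify $\eta\in A(X)$ --- are the standard route and fit together.

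One point you pass over too quickly deserves attention. In the $S$--$S^{\ast}$ duality, the Yosida regularization $A^{(\mu)}$ and its resolvent $x_{\mu}=J_{\mu}x$ must be built from a Riesz isomorphism $R:S\to S^{\ast}$ chosen as a spectral function of $L$ (e.g.\ $R=1-L$), so that simultaneously: $J_{\mu}$ is an $S$-contraction; $x-J_{\mu}x=\mu R^{-1}A^{(\mu)}(x)$; and $L^{(1/\beta)}$ commutes with $R^{-1}$ with $L^{(1/\beta)}R^{-1}\le 0$. All three facts are needed to split the drift pairing as
\[
2\,{}_{S^{\ast}}\!\langle A^{(\mu)}(X^{\mu}),L^{(1/\beta)}X^{\mu}\rangle_{S}
=2\,{}_{S^{\ast}}\!\langle A^{(\mu)}(X^{\mu}),L^{(1/\beta)}J_{\mu}X^{\mu}\rangle_{S}
+2\mu\,{}_{S^{\ast}}\!\langle A^{(\mu)}(X^{\mu}),L^{(1/\beta)}R^{-1}A^{(\mu)}(X^{\mu})\rangle_{S},
\]
estimate the first summand by (A3) at $J_{\mu}X^{\mu}$ (using that $A^{(\mu)}(X^{\mu})\in A(J_{\mu}X^{\mu})$), and discard the second as nonpositive. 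You gesture at this with ``a duality structure for which the resolvent is an $S$-contraction'' and ``a nonnegative (spectral) multiple'', but it is exactly this compatible choice of $R$ that carries the argument, and it should be stated explicitly; with a generic Riesz map neither the commutation with $L^{(1/\beta)}$ nor the sign of the leftover term would be automatic.
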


\begin{proof}
See \cite[Theorem 4.4]{GessToelle14}.
\end{proof}
\begin{defn}
\label{def:limit-solution}An $\{\mathcal{F}_{t}\}_{t\ge0}$-adapted
stochastic process $X\in L^{2}(\Omega;C([0,T];H))$ is called a \emph{limit
solution }to \eqref{eq:Ito-multivalued} with starting point $x\in H$
if for all approximations $x^{m}\in S$, $m\in\mathbb{N}$ with $\|x^{m}-x\|_{H}\to0$
as $m\to\infty$ and all $B^{m}$ satisfying (B1)--(B3) and such
that $B^{m}(y)\to B(y)$ strongly in $L^{2}([0,T]\times\Omega;L_{2}(U,H))$
for every $y\in S$, we have that
\[
X^{m}\to X\quad\text{strongly in \ensuremath{L^{2}(\Omega;C([0,T];H))} as \ensuremath{m\to\infty}.}
\]
\end{defn}

\begin{thm}
\label{thm:ex-limit-sln} Suppose that conditions (A1)--(A3), (B1)--(B2)
hold. Let $x\in L^{2}(\Omega,\mathcal{F}_{0},\mathbb{P};H)$. Then
there exists a unique limit solution in the sense of the previous
definition to the equation
\begin{equation}
dX_{t}+A(X_{t})\,dt\ni B_{t}(X_{t})\,dW_{t},\quad X_{0}=x.\label{eq:abstract-Ito-eq-limit-solution}
\end{equation}
\end{thm}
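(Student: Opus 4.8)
The plan is to obtain the limit solution as the strong $L^{2}(\Omega;C([0,T];H))$-limit of the strong solutions provided by Theorem \ref{thm:sln-ex-GT}, and to check that this limit is the same for every admissible approximating family. Note first that once such a process $X$ is produced, \emph{uniqueness is automatic}: any two limit solutions $X,\tilde X$ must both equal the $L^{2}(\Omega;C([0,T];H))$-limit of the strong solutions attached to any one fixed admissible family, hence $X=\tilde X$. So the content is existence, and to make Definition \ref{def:limit-solution} non-vacuous one first exhibits at least one admissible family, e.g. by mollification: $x^{m}:=J_{1/m}x\in S$ and $B^{m}(y)\zeta:=J_{1/m}\big(B(J_{1/m}y)\zeta\big)$; using that $J_{1/m}$ is a contraction on $H$ and on $S$ and that $J_{1/m}\to\mathrm{id}$ strongly, these satisfy (B1)--(B2) with the constants of $B$, satisfy (B3) with an $m$-dependent constant, and verify $B^{m}(y)\to B(y)$ in $L^{2}([0,T]\times\Omega;L_{2}(U,H))$ for every $y\in S$.

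The engine of the proof is a comparison estimate: for two strong solutions $X^{1},X^{2}$ of equations of the form \eqref{eq:approx-eq-B1B2B3} with initial data $x^{1},x^{2}\in L^{2}(\Omega,\Fcal_{0},\P;S)$ and coefficients $B^{1},B^{2}$ satisfying (B1)--(B3) with common constants $C_{1},C_{2}$, there is $C(T)$ depending only on $C_{2}$ and $T$ with
\[
\E\sup_{s\in[0,t]}\|X^{1}_{s}-X^{2}_{s}\|_{H}^{2}\le C(T)\Big(\E\|x^{1}-x^{2}\|_{H}^{2}+\E\int_{0}^{T}\|B^{1}(X^{2}_{s})-B^{2}(X^{2}_{s})\|_{L_{2}(U,H)}^{2}\,ds\Big).
\]
To prove it I would apply the It\^o formula \cite[Theorem 4.2.5]{PR07-2} in the Gelfand triple $S\subset H\subset S^{\ast}$ to $\|X^{1}-X^{2}\|_{H}^{2}$: monotonicity of $A$ makes the drift term $-2\int_{0}^{t}\brrr{\eta^{1}_{s}-\eta^{2}_{s},X^{1}_{s}-X^{2}_{s}}\,ds$ (with $\eta^{i}\in A(X^{i})$) non-positive and it is discarded; the quadratic-variation term is split by (B2) into $2C_{2}\|X^{1}_{s}-X^{2}_{s}\|_{H}^{2}+2\|B^{1}(X^{2}_{s})-B^{2}(X^{2}_{s})\|_{L_{2}(U,H)}^{2}$; the stochastic integral is bounded by the Burkholder--Davis--Gundy inequality, absorbing the supremum into the left-hand side; and Gronwall's lemma concludes. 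That the integrals here are finite and uniformly bounded along the approximations follows from the standard a priori bounds for the strong solutions in $L^{\infty}([0,T];L^{2}(\Omega;H))$ and in $L^{2}([0,T]\times\Omega;S)$, obtained by the same It\^o/monotonicity/Burkholder--Davis--Gundy argument using (A2)--(A3), (B1), and the fact that (A2) forces $A(0)=\{0\}$.

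Granting the residual $R_{m,k}:=\E\int_{0}^{T}\|B^{m}(X^{k}_{s})-B^{k}(X^{k}_{s})\|_{L_{2}(U,H)}^{2}\,ds\to0$ as $m,k\to\infty$, the comparison estimate shows $(X^{m})$ is Cauchy in $L^{2}(\Omega;C([0,T];H))$ with some limit $X$, and the same estimate across two families shows $X$ is independent of the family; adaptedness, progressive measurability and $X_{0}=x$ pass to the limit, so $X$ is a limit solution, which by the opening remark is then the only one, and with a common noise family the comparison estimate gives in addition the stability bound $\E\sup_{t\in[0,T]}\|X_{t}-Y_{t}\|_{H}^{2}\le C(T)\E\|x-y\|_{H}^{2}$, where $Y$ is the limit solution started from $y$. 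The step $R_{m,k}\to0$ is the main obstacle, because the hypothesis $B^{m}(y)\to B(y)$ is assumed only for each fixed $y\in S$, whereas it must be used along the random, index-dependent trajectories $X^{k}_{\cdot}$. The plan here is to exploit (B2) --- which makes every $B^{m}$ Lipschitz from $H$ to $L_{2}(U,H)$ with a common constant, hence upgrades, by density of $S$ in $H$, the pointwise convergence $B^{m}\to B$ from $S$ to all of $H$ --- together with the uniform a priori bounds on $(X^{m})$ in $L^{2}([0,T]\times\Omega;S)$, which via the equation and (A2) also yield relative compactness of $(X^{m})$ in $L^{2}([0,T]\times\Omega;H)$; approximating the trajectories by simple functions uniformly in the parameters and invoking a Vitali/dominated-convergence argument then forces $R_{m,k}\to0$. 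The remaining verifications are routine.
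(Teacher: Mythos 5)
The paper proves Theorem~\ref{thm:ex-limit-sln} simply by citing \cite[Theorem 4.6]{GessToelle14}, so you are attempting a self-contained argument. The scaffolding of your plan is sound: the observation that uniqueness of a limit solution is automatic once existence and one admissible family are established (interleave two families), the mollified family $x^{m}=J_{1/m}x$, $B^{m}(y)=J_{1/m}\,B(J_{1/m}y)$ does satisfy (B1)--(B2) with the constants of $B$ and (B3) with $m$-dependent constants, and the It\^o/monotonicity/Burkholder--Davis--Gundy/Gronwall comparison estimate is correct, including the form of the residual $\E\int_{0}^{T}\|B^{1}(X^{2}_{s})-B^{2}(X^{2}_{s})\|^{2}_{L_{2}(U,H)}\,ds$.

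The genuine gap is precisely where you flag it. You assert that the uniform bounds on $(X^{k})$ in $L^{2}([0,T]\times\Omega;S)$, ``via the equation and (A2),'' yield relative compactness of $(X^{k})$ in $L^{2}([0,T]\times\Omega;H)$. This is false: Aubin--Lions-type reasoning gives compactness in the $(t,\xi)$-variables only, while $L^{2}(\Omega)$ has no compactness structure whatsoever. (For instance, $X^{k}:=\xi_{k}\,e$ with $\xi_{k}$ i.i.d.\ signs and $e\in S$ fixed is bounded in $L^{2}([0,T]\times\Omega;S)$, has zero time derivative, and yet admits no strongly convergent subsequence in $L^{2}([0,T]\times\Omega;H)$.) Without that compactness, your Vitali/simple-function reduction for $R_{m,k}$ cannot get off the ground, because its whole purpose is to approximate the random, $k$-dependent trajectories by a fixed finite set of elements. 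The useful compactness is the compact embedding $S\hookrightarrow H$ applied to \emph{deterministic} $S$-balls, not to the random trajectories: (B2) makes $\{B^{m}\}$ uniformly Lipschitz $H\to L_{2}(U,H)$, hence equicontinuous, so pointwise convergence on $S$ upgrades (Arzel\`a--Ascoli) to \emph{uniform} convergence of $B^{m}\to B$ on each set $\{y\in S:\|y\|_{S}\le R\}$, which is precompact in $H$. One then truncates on $\{\|X^{k}_{s}\|_{S}>R\}$. Even after this replacement, closing the tail requires controlling $\sup_{k}\E\int_{0}^{T}\|X^{k}_{s}\|_{S}^{2}\mathbf{1}_{\{\|X^{k}_{s}\|_{S}>R\}}\,ds$ as $R\to\infty$, i.e.\ uniform integrability of $\{\|X^{k}\|_{S}^{2}\}_{k}$, which a plain $L^{1}$ a priori bound does not provide; some further input (a higher-moment or $L^{\infty}_{t}L^{2}_{\omega}(S)$ bound, De la Vall\'ee--Poussin, or structural cancellation from the specific mollification) is still needed. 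As written, the proposal does not close this step.
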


\begin{proof}
See \cite[Theorem 4.6]{GessToelle14}.
\end{proof}

\section{\label{sec:Proof} Remaining proofs}
\begin{proof}[Proof of Proposition \ref{prop:BEcond}.]
 Suppose that $a\in C^{2}(\T^{d};\R^{d\times d})$ and that for all
$1\le i,j\le d$,
\begin{equation}
\sum_{k=1}^{d}\sum_{q=1}^{d}\left[a_{qj}\partial_{k}a_{qi}+a_{qi}\partial_{k}a_{qj}\right]=0\quad\text{on }\T^{d},\label{eq:BEsufficient-1}
\end{equation}
where $a=(a_{ij})$. Let $f\in C^{3}(\T^{d})$. Let $\A:=a^{\ast}a$.
Utilizing the Einstein summation convention, we get that
\[
\begin{aligned} & \frac{1}{2}L^{a}|a\nabla f|^{2}-\langle a\nabla f,a\nabla L^{a}f\rangle\\
= & \frac{1}{2}\partial_{k}(a_{pk}a_{pl}\partial_{l}(a_{qi}\partial_{i}fa_{qj}\partial_{j}f))-a_{qi}\partial_{i}fa_{qj}\partial_{j}\partial_{k}(a_{pk}a_{pl}\partial_{l}f)\\
= & \partial_{k}(a_{pk}a_{pl}a_{qi}\partial_{i}f\partial_{l}a_{qj}\partial_{j}f)+\partial_{k}(a_{pk}a_{pl}a_{qi}\partial_{i}fa_{qj}\partial_{l}\partial_{j}f)\\
 & -a_{qi}\partial_{i}fa_{qj}\partial_{j}(\partial_{k}a_{pk}a_{pl}\partial_{l}f)-a_{qi}\partial_{i}fa_{qj}\partial_{j}(a_{pk}\partial_{k}a_{pl}\partial_{l}f)-a_{qi}\partial_{i}fa_{qj}\partial_{j}(a_{pk}a_{pl}\partial_{k}\partial_{l}f)\\
= & \partial_{k}a_{pk}a_{pl}a_{qi}\partial_{i}f\partial_{l}a_{qj}\partial_{j}f+\partial_{k}a_{pk}a_{pl}a_{qi}\partial_{i}fa_{qj}\partial_{l}\partial_{j}f+a_{pk}\partial_{k}a_{pl}a_{qi}\partial_{i}f\partial_{l}a_{qj}\partial_{j}f\\
 & +a_{pk}\partial_{k}a_{pl}a_{qi}\partial_{i}fa_{qj}\partial_{l}\partial_{j}f+a_{pk}a_{pl}\partial_{k}a_{qi}\partial_{i}f\partial_{l}a_{qj}\partial_{j}f+a_{pk}a_{pl}\partial_{k}a_{qi}\partial_{i}fa_{qj}\partial_{l}\partial_{j}f\\
 & +a_{pk}a_{pl}a_{qi}\partial_{k}\partial_{i}f\partial_{l}a_{qj}\partial_{j}f+a_{pk}a_{pl}a_{qi}\partial_{k}\partial_{i}fa_{qj}\partial_{l}\partial_{j}f+a_{pk}a_{pl}a_{qi}\partial_{i}f\partial_{k}\partial_{l}a_{qj}\partial_{j}f\\
 & +a_{pk}a_{pl}a_{qi}\partial_{i}f\partial_{k}a_{qj}\partial_{l}\partial_{j}f+a_{pk}a_{pl}a_{qi}\partial_{i}f\partial_{l}a_{qj}\partial_{k}\partial_{j}f+a_{pk}a_{pl}a_{qi}\partial_{i}fa_{qj}\partial_{k}\partial_{l}\partial_{j}f\\
 & -a_{qi}\partial_{i}fa_{qj}\partial_{j}\partial_{k}a_{pk}a_{pl}\partial_{l}f-a_{qi}\partial_{i}fa_{qj}\partial_{k}a_{pk}\partial_{j}a_{pl}\partial_{l}f-a_{qi}\partial_{i}fa_{qj}\partial_{k}a_{pk}a_{pl}\partial_{j}\partial_{l}f\\
 & -a_{qi}\partial_{i}fa_{qj}\partial_{j}a_{pk}\partial_{k}a_{pl}\partial_{l}f-a_{qi}\partial_{i}fa_{qj}a_{pk}\partial_{j}\partial_{k}a_{pl}\partial_{l}f-a_{qi}\partial_{i}fa_{qj}a_{pk}\partial_{k}a_{pl}\partial_{j}\partial_{l}f\\
 & -a_{qi}\partial_{i}fa_{qj}\partial_{j}a_{pk}a_{pl}\partial_{k}\partial_{l}f-a_{qi}\partial_{i}fa_{qj}a_{pk}\partial_{j}a_{pl}\partial_{k}\partial_{l}f-a_{qi}\partial_{i}fa_{qj}a_{pk}a_{pl}\partial_{j}\partial_{k}\partial_{l}f\\
\ge & |a\nabla(a\nabla f)|^{2}+\operatorname{Tr}\left[(\A(D^{2}f))^{2}\right]-C\left(\|a\|_{\infty},\|\nabla a\|_{\infty},\|D^{2}a\|_{\infty}\right)|\nabla f|^{2}\\
\ge & -\kappa^{-1}C\left(\|a\|_{\infty},\|\nabla a\|_{\infty},\|D^{2}a\|_{\infty}\right)|a\nabla f|^{2},
\end{aligned}
\]
where we have used that $\operatorname{Tr}\left[(\A(D^{2}f))^{2}\right]\ge0$,
which can be seen as follows. Recall that for any $d\times d$-Matrix
$B$, we have that $\operatorname{Tr}(B^{2})=\sum_{i=1}^{d}\lambda_{i}^{2}$,
where $\lambda_{i}\in\C$ are the eigenvalues of $B$ (with distinct
indices assigned to repeated eigenvalues, if necessary). By (E), $\A$
is positive definite and symmetric. However, by polar decomposition,
$\A(D^{2}f)$ has the same spectrum as $\sqrt{\A}(D^{2}f)\sqrt{\A}$,
which, being symmetric, has only real eigenvalues. The proof is completed
by density of $C^{3}(\T^{d})$ in $\Lambda^{a}=H^{3}(\T^{d})$, where
$\Lambda^{a}$ is as in Definition \ref{def:BEdefi}.
\end{proof}

\def\cprime{$'$}

\end{document}